\title{Report on $\bE_\infty$-descendability}
\author{Benjamin Antieau and Germ\'{a}n Stefanich}
\date{\today}
\setlist{noitemsep}
\newcommand{\stackspace}{2.5}
\newcommand{\stack}[2][1cm]{\;\tikz[baseline, yshift=.65ex]%
    {\foreach \k [evaluate=\k as \r using (.5*#2+.5-\k)*\stackspace] in {1,...,#2}{%
    \ifodd\k{\draw[->](0,\r pt)--(#1,\r pt);}%
    \else{\draw[<-](0,\r pt)--(#1,\r pt);}\fi
    }}\;}
\DeclareSymbolFontAlphabet{\mathbb}{AMSb} 
\DeclareSymbolFontAlphabet{\mathbbl}{bbold}
\definecolor{todo}{rgb}{1,0,0}
\definecolor{conditional}{rgb}{0,1,0}
\definecolor{e-mail}{rgb}{0,.40,.80}
\definecolor{reference}{rgb}{.20,.60,.22}
\definecolor{mrnumber}{rgb}{.80,.40,0}
\definecolor{citation}{rgb}{0,.40,.80}
\let\oldmarginpar\marginpar
\renewcommand\marginpar[1]{\-\oldmarginpar[\raggedleft\footnotesize #1]%
{\raggedright\footnotesize #1}}
\newcommand{\Cscr}{\mathcal{C}}
\newcommand{\Dscr}{\mathcal{D}}
\newcommand{\Fscr}{\mathcal{F}}
\newcommand{\Iscr}{\mathcal{I}}
\newcommand{\Oscr}{\mathcal{O}}
\newcommand{\Sscr}{\mathcal{S}}
\newcommand{\Xscr}{\mathcal{X}}
\newcommand{\B}{\mathrm{B}}
\newcommand{\C}{\mathrm{C}}
\renewcommand{\d}{\mathrm{d}}
\newcommand{\D}{\mathrm{D}}
\newcommand{\F}{\mathrm{F}}
\renewcommand{\H}{\mathrm{H}}
\newcommand{\h}{\mathrm{h}}
\newcommand{\R}{\mathrm{R}}
\newcommand{\bA}{\mathbf{A}}
\newcommand{\bE}{\mathbf{E}}
\newcommand{\bF}{\mathbf{F}}
\newcommand{\bN}{\mathbf{N}}
\newcommand{\bQ}{\mathbf{Q}}
\newcommand{\bS}{\mathbf{S}}
\newcommand{\bT}{\mathbf{T}}
\newcommand{\bZ}{\mathbf{Z}}
\newcommand{\afrak}{\mathfrak{a}}
\newcommand{\pfrak}{\mathfrak{p}}
\newcommand{\qfrak}{\mathfrak{q}}
\newcommand{\Fin}{\Fscr\mathrm{in}}
\newcommand{\op}{\mathrm{op}}
\newcommand{\cofib}{\mathrm{cofib}}
\newcommand{\fib}{\mathrm{fib}}
\newcommand{\cn}{\mathrm{cn}}
\newcommand{\Mod}{\mathrm{Mod}}
\newcommand{\QCoh}{\mathrm{QCoh}}
\newcommand{\qc}{\mathrm{qc}}
\newcommand{\CAlg}{\mathrm{CAlg}}
\newcommand{\DAlg}{\mathrm{DAlg}}
\newcommand{\Shv}{\mathrm{Shv}}
\newcommand{\LSym}{\mathrm{LSym}}
\renewcommand{\max}{\mathrm{max}}
\renewcommand{\min}{\mathrm{min}}
\newcommand{\cont}{\mathrm{cont}}
\newcommand{\Cond}{\mathrm{Cond}}
\newcommand{\hyp}{\mathrm{hyp}}
\newcommand{\fin}{\mathrm{fin}}
\newcommand{\Cont}{\mathrm{Cont}}
\newcommand{\ann}{\mathrm{ann}}
\newcommand{\Pro}{\mathrm{Pro}}
\newcommand{\perf}{\mathrm{perf}}
\renewcommand{\part}{\mathrm{part}}
\newcommand{\can}{\mathrm{can}}
\newcommand{\heart}{\heartsuit}
\newcommand{\id}{\mathrm{id}}
\renewcommand{\geq}{\geqslant}
\renewcommand{\leq}{\leqslant}
\DeclareMathOperator{\Ext}{Ext}
\newcommand{\dR}{\mathrm{dR}}
\newcommand{\crys}{\mathrm{crys}}
\newcommand{\KU}{\mathrm{KU}}
\newcommand{\KO}{\mathrm{KO}}
\newcommand{\Map}{\mathrm{Map}}
\newcommand{\Hom}{\mathrm{Hom}}
\newcommand{\Fun}{\mathrm{Fun}}
\DeclareMathOperator*{\Tot}{Tot}
\DeclareMathOperator{\Spec}{Spec}
\newcommand{\we}{\simeq}
\newcommand{\iso}{\cong}
\theoremstyle{plain}
\newtheorem{theorem}{Theorem}[section]
\newtheorem*{theorem*}{Theorem}
\newtheorem{lemma}[theorem]{Lemma}
\newtheorem{proposition}[theorem]{Proposition}
\newtheorem{corollary}[theorem]{Corollary}
\newtheorem*{corollary*}{Corollary}
\theoremstyle{plain}
\newcounter{zaehler}
\theoremstyle{plain}
\theoremstyle{definition}
\newtheoremstyle{named}{}{}{\itshape}{}{\bfseries}{.}{.5em}{#1 \thmnote{#3}}
\theoremstyle{named}
\theoremstyle{definition}
\newtheorem{definition}[theorem]{Definition}
\newtheorem{variant}[theorem]{Variant}
\newtheorem{notation}[theorem]{Notation}
\newtheorem{example}[theorem]{Example}
\newtheorem*{example*}{Example}
\newtheorem{question}[theorem]{Question}
\newtheorem*{question*}{Question}
\newtheorem{remark}[theorem]{Remark}
\begin{document}

\maketitle
\begin{abstract}
    \noindent
    We introduce the notion of $\bE_\infty$-descendability as well as a
    derived variant. We prove that several classes of descendable maps of
    commutative rings are $\bE_\infty$-descendable. As an application, we prove a variant of
    Tannaka duality.
\end{abstract}

\section{Introduction}

Let $f\colon R\rightarrow S$ be a morphism of $\bE_\infty$-rings with $$S^\bullet\colon S\stack{3}
S\otimes_RS\stack{5}\cdots$$ the associated
\v{C}ech complex, a cosimplicial $\bE_\infty$-ring. There is an associated $\Tot$-tower
$\{\Tot^n(S^\bullet)\}_n$ with limit $\Tot(S^\bullet)=\lim_n\Tot^n(S^\bullet)$.
In~\cite{mathew-galois}, Mathew declares $f$ to be descendable if the pro-object
$\{\Tot^n(S^\bullet)\}_n$ is equivalent
to the constant pro-object $\{R\}$ in $\Pro(\D(R))$, where $\D(R)$ denotes the $\infty$-category of
$R$-modules and $\Pro(\D(R))$ is its pro-category.

A consequence of descendability is that the adjunction $\D(R)\rightleftarrows\D(S)$ is comonadic.
More generally, if $\Cscr$ is any $R$-linear $\infty$-category, then the adjunction
$\Cscr\rightleftarrows\Mod_S(\Cscr)$ is comonadic, and the converse is true. Mathew proved
in~\cite{mathew-galois} that if $R\rightarrow S$ is a faithfully flat map of commutative rings and
if $R$ has cardinality at most $\aleph_n$ or $S$ is $\aleph_n$-compactly generated over $R$ for some $n$, then $R\rightarrow S$ is
descendable. Non-descendable faithfully flat maps have recently been constructed by
Aoki~\cite{aoki-descendable} and Zelich~\cite{zelich,zelich-indivisible}. Aoki's examples are Boolean rings of cardinality $\aleph_\omega$.

Other examples of descendable maps in algebraic geometry are quotients by nilpotent
ideals~\cite{mathew-galois} and $h$-covers of Noetherian schemes~\cite{bhatt-scholze-affine}.

Descendability is equivalent to the natural map $R\rightarrow\Tot^n(S^\bullet)$ admitting an
$R$-module retraction for some $n\geq 0$. For example, if $R$ is a regular, Noetherian commutative
ring and if $R\subseteq S$ is a finite extension with $S$ commutative, then the direct summand
conjecture (a theorem of Andr\'e, see~\cite{andre-summand,bhatt-summand}) implies that $R\rightarrow S$ admits a retraction as an
$R$-module, so one can take $n=0$ above.

\medskip
In this paper, we introduce the notion of $\bE_\infty$-descendability. A map $f\colon R\rightarrow
S$ is $\bE_\infty$-descendable if the pro-object $\{R\}\rightarrow\{\Tot^n(S^\bullet)\}_n$ is an
equivalence in $\Pro(\CAlg(\D(R)))$, the pro-category of
$\bE_\infty$-$R$-algebras. The forgetful functor $\Pro(\CAlg(\D(R)))\rightarrow\Pro(\D(R))$ is
typically not
conservative (see Example~\ref{ex:not_conservative}), so the condition of $\bE_\infty$-descendability is {\em a priori} stronger than that
of descendability. As
many $\bE_\infty$-rings of interest in algebraic geometry are in fact canonically derived
commutative rings in the sense of~\cite{raksit}, we also study the related notion of
$\DAlg$-descendability, which implies $\bE_\infty$-descendability.
Over $\bQ$ the notions of $\bE_\infty$ and $\DAlg$-descendability agree.

A simple consequence of $\bE_\infty$-descendability of $R\rightarrow S$ is that if
$F\colon\CAlg(\D(R))\rightarrow\Cscr$ is any functor, then
$\{F(R)\}\rightarrow\{F(\Tot^n(S^\bullet))\}_n$ is an equivalence in $\Pro(\Cscr)$.
If one requires only descendability, then the corresponding result only works for functors $F$ which
factor through the forgetful functor $\CAlg(\D(R))\rightarrow\D(R)$.

We show that many descendable morphisms are $\DAlg$-descendable. As observed
in~\cite[Thm.~3.13]{mathew-examples}, where it is credited to Srikanth Iyengar, faithfully flat extensions $R\rightarrow S$ are descendable if $R$ is
Noetherian of finite Krull dimension. We extend this result to $\bE_\infty$-descendability in
characteristic $0$.

\begin{theorem}[Theorem~\ref{thm:noetherian}]\label{thm:noetherian_intro}
    If $R$ is a Noetherian $\bQ$-algebra of finite Krull dimension, then every faithfully
    flat map $R\rightarrow S$ is $\bE_\infty$-descendable.
\end{theorem}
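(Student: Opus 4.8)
The plan follows Iyengar's strategy for the module-theoretic statement, upgraded to the $\bE_\infty$-setting. I work throughout with $\bE_\infty$-descendability, freely invoking the identification with $\DAlg$-descendability over $\bQ$ in order to use the results of this paper on $\DAlg$-descendable morphisms (in particular that quotients by nilpotent ideals are $\DAlg$-descendable, hence $\bE_\infty$-descendable), and I argue by Noetherian induction on the Krull dimension $d=\dim R$. A preliminary reduction lets us assume $R$ is \emph{reduced}: since $R$ is Noetherian its nilradical $\mathrm{nil}(R)$ is nilpotent, so $R\to R_\red$ is a quotient by a nilpotent ideal and hence $\bE_\infty$-descendable; likewise $\mathrm{nil}(R)\cdot S$ is a nilpotent ideal of $S$, so $S\to S\otimes_R R_\red$ is $\bE_\infty$-descendable; if we know that the faithfully flat map $R_\red\to S\otimes_R R_\red$ (a Noetherian $\bQ$-algebra of the same Krull dimension) is $\bE_\infty$-descendable, then so is the composite $R\to R_\red\to S\otimes_R R_\red$, and since it factors through $R\to S$, the two-out-of-three property for $\bE_\infty$-descendability shows $R\to S$ is $\bE_\infty$-descendable.

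The base case $d=0$ reduces, since $\Spec R$ is then a finite discrete space, to the case $R=k$ a field, so that $f$ is the inclusion of a nonzero $k$-algebra $S$. Because the \v{C}ech complex $S^\bullet$ witnesses faithfully flat descent, its normalized cochain complex is exact in all positive cohomological degrees and has $H^0=k$; passing to partial totalizations one finds, by induction on $m$, that $\Tot^m(S^\bullet)$ has homotopy groups concentrated in degrees $0$ and $-m$, with $\pi_0=k$. For degree reasons over the field $k$ there is no room for a nontrivial multiplicative structure in such a range, so $\Tot^m(S^\bullet)$ is a square-zero extension of $k$; as square-zero extensions of $k$ by a $k$-module are classified by $L_{k/k}=0$, it follows that $\Tot^m(S^\bullet)\simeq k\oplus \pi_{-m}\Tot^m(S^\bullet)[-m]$ as $\bE_\infty$-$k$-algebras. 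A degree count then shows that each transition map $\Tot^{m+1}(S^\bullet)\to\Tot^m(S^\bullet)$ annihilates the summand in degree $-m-1$ and hence factors, compatibly in $m$, through the unit $k\to\Tot^m(S^\bullet)$; this exhibits $\{k\}\to\{\Tot^m(S^\bullet)\}$ as an isomorphism in $\Pro(\CAlg(\D(k)))$, of index bounded independently of $k$ and $S$. (Note that the $\Tot$-tower is typically not eventually constant, only pro-constant.)

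For the inductive step, let $d\geq 1$ and assume the result in dimensions $<d$; as above we may take $R$ reduced. Let $\pfrak_1,\dots,\pfrak_r$ be its minimal primes, so that each $R_{\pfrak_i}$ is a field and, by the base case, $R_{\pfrak_i}\to S_{\pfrak_i}$ is $\bE_\infty$-descendable of bounded index. Since $R_{\pfrak_i}$ is a filtered colimit of localizations $R[h^{-1}]$, since $\Tot^m$ commutes with filtered colimits, and since the finite descent datum realizing $\bE_\infty$-descendability of a \emph{given} index lives in $\CAlg(\D(R_{\pfrak_i}))=\colim_h\CAlg(\D(R[h^{-1}]))$, this descendability descends to $R[h_i^{-1}]\to S[h_i^{-1}]$ for suitable $h_i\notin\pfrak_i$. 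The open $U=\bigcup_i D(h_i)$ is dense, so its closed complement is cut out by an ideal $J$ with $\dim R/J\leq d-1$, and by induction the faithfully flat map $R/J\to S/JS$ is $\bE_\infty$-descendable. It remains to glue this with the (bounded-index) $\bE_\infty$-descendability of $R\to S$ over $U$ to conclude $\bE_\infty$-descendability of $R\to S$ itself. This local--global step is the crux, and is where the argument is genuinely harder than its module-theoretic counterpart: $\CAlg(\D(R))$ is not recovered by a naive recollement from $\CAlg(\D(R[g^{-1}]))$ and $\CAlg(\D(R/g))$, so the gluing requires controlling the cotangent complexes of the partial totalizations $\Tot^m(S^\bullet)$ and their behaviour under localization and reduction mod $g$ — and it is here that the hypothesis $\bQ\subseteq R$ enters, through the good (finitely generated) behaviour of derived symmetric powers in characteristic $0$. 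One must also track descendability indices throughout, both so that the spreading-out above is legitimate and so that the gluing produces an honest finite-index pro-equivalence; granting the local--global principle in this form, the induction closes and the theorem follows.
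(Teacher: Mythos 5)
Your proposal has a genuine gap at exactly the point you flag as ``the crux'': the local--global step. You reduce to gluing $\bE_\infty$-descendability of $R[1/h_i]\to S[1/h_i]$ (over a dense open) with $\bE_\infty$-descendability of $R/J\to S/JS$ (over the closed complement), and then simply grant that such a gluing principle exists. No mechanism is offered, and none is evident: as you yourself note, $\CAlg(\D(R))$ is not a recollement of $\CAlg(\D(R[1/g]))$ and $\CAlg(\D(R/g))$, and even producing a retraction of $R\to\Tot^n(S^\bullet)$ from retractions over the open and closed pieces is not a formal consequence of anything proved in the paper. Since this unproved step carries essentially all the content of the theorem in your scheme, the argument does not close. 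The spreading-out step is also shaky as written: descending a retraction $\Tot^n(S^\bullet)_{\pfrak_i}\to R_{\pfrak_i}$ along $\CAlg(\D(R_{\pfrak_i}))=\colim_h\CAlg(\D(R[1/h]))$ requires the relevant mapping spaces to commute with the filtered colimit, i.e.\ some compactness of $\Tot^n(S^\bullet)$ as an algebra, which fails for a general (typically huge) faithfully flat $S$. By contrast, your degree-zero base case is essentially fine (the splitting of $\Tot^m(S^\bullet)\simeq k\oplus\pi_{-m}[-m]$ needs a formality/obstruction argument rather than the slightly circular appeal to $L_{k/k}=0$, but in characteristic zero this is harmless), as is the reduction to reduced rings via Lemma~\ref{lem:two_three}.

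The paper's proof takes a different and more direct route, with no induction on dimension and no gluing. The finite Krull dimension hypothesis enters through Raynaud--Gruson: every flat module over a Noetherian ring of dimension $d$ has projective dimension at most $d$, so $\Ext^{d+1}_R(Q,R)=0$ for $Q$ flat. One then observes that $R\to\Tot^{d}(S^\bullet)$ has fiber $I^{\otimes d+1}$ of Tor-amplitude $\leq -(d+1)$ (the ``$(d+1)$-good'' condition), and runs an obstruction-theoretic cell-attachment argument: using free algebras $\LSym_R Q[-m]$ on flat modules (this is where $\bQ\subseteq R$ is used, to control $\LSym$), Lemmas~\ref{lem:in_the_limit} and~\ref{lem:last_class} alternately normalize the homotopy into two degrees and kill the top negative homotopy group, the only obstruction being the extension class in $\Ext^{d+1}_R(\pi_{-d},R)$, which vanishes by the projective dimension bound. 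The transfinite composition has colimit $R$, producing an $\bE_\infty$-retraction of $R\to\Tot^d(S^\bullet)$ and hence descendability via Proposition~\ref{prop:equivalent}(ii). If you want to salvage your outline, the realistic fix is to abandon the dimension induction and gluing and instead prove the needed $\Ext$-vanishing globally, which is precisely the Raynaud--Gruson input.
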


As a result, fppf maps $R\rightarrow S$ of $\bQ$-algebras are descendable.

Bhatt and Scholze prove in~\cite[Prop.~11.25]{bhatt-scholze-affine} that $h$-covers of Noetherian
schemes induce descendable morphisms. We show in some low-dimensional cases that the stronger conclusion
of $\DAlg$-descendability holds.

\begin{proposition}[Proposition~\ref{prop:h}]
    Let $g\colon R\rightarrow S$ be an integral extension of Noetherian rings. If
    $\dim R\leq 1$, then $f$ is $\DAlg$-descendable. If $\dim R\leq 2$ and $f$ is
    birational, then $f$ is $\DAlg$-descendable.
\end{proposition}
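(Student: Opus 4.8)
The plan is to enhance Bhatt and Scholze's proof that $h$-covers of Noetherian schemes are descendable \cite[Prop.~11.25]{bhatt-scholze-affine}, carrying through the argument both the property of $\DAlg$-descendability and a bound on the $\DAlg$-descendability index. I would begin by isolating the formal inputs, all of which are properties of $\DAlg$-descendability established in the body of the paper: stability under composition and base change; descent along $\DAlg$-descendable maps, so that $R\to S$ is $\DAlg$-descendable once $R\to R'$ and $R'\to R'\otimes_R S$ are; the ``bottom'' two-out-of-three, so that if $R\to S\to T$ with $R\to T$ $\DAlg$-descendable then $R\to S$ is $\DAlg$-descendable; and compatibility with filtered colimits of uniformly bounded index, so that if $S=\colim_\alpha S_\alpha$ with every $R\to S_\alpha$ of index at most $N$, then $R\to S$ is $\DAlg$-descendable. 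Alongside these I would use two explicit classes of maps: nilpotent thickenings $R\to R/I$ with $I^m=0$, which are $\DAlg$-descendable with index bounded in terms of $m$; and finite, faithfully flat maps (for instance, finite \'etale covers), which are $\DAlg$-descendable of index $2$. The point at which the stronger hypothesis matters --- derived commutative rings in the sense of \cite{raksit} rather than merely $\bE_\infty$-rings --- is exactly the verification of these two classes: the $\LSym$ and divided-power operations produce the required contractions of the relevant $\Tot$-towers as towers of algebras, whereas for $\bE_\infty$-rings one only obtains contractions of underlying modules.

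With these in place, the case $\dim R\le 1$ is proved by Noetherian induction, and it is convenient (and in fact necessary for the induction) to treat every integral morphism $R\to S$ of Noetherian rings with $\dim R\le 1$ that is surjective on spectra, not only injective ones. Nilpotent thickenings together with descent and bottom two-out-of-three reduce to $R$ reduced, then, using the cover $R\to\prod_i R/\mathfrak p_i$ over the minimal primes --- which is the abstract blowup cover attached to the zero-dimensional locus $\bigcup_{i<j}V(\mathfrak p_i+\mathfrak p_j)$ --- to $R$ a one-dimensional Noetherian domain. Separating the components of $\Spec S$ that dominate $\Spec R$ from those supported over closed points (again an abstract blowup cover with zero-dimensional center), one is left with $R\to S$ finite flat away from a finite set $Z=V(J)$ of closed points; the abstract blowup square attached to $Z$ then reduces $R\to S_\alpha$, for each module-finite $R$-subalgebra $S_\alpha\subseteq S$, to its two corners: the finite flat morphism over $\Spec R\setminus Z$, of index $2$, and the zero-dimensional morphism $R/J\to S_\alpha/JS_\alpha$, whose index is bounded independently of $\alpha$ --- for the latter one reduces to $R$ a product of fields, then a field $k$, and observes that a Noetherian integral $k$-algebra is a filtered colimit of finite products of finite (hence finite flat) $k$-algebras. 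Composition, descent, and bottom two-out-of-three then make $R\to S_\alpha$ $\DAlg$-descendable of index independent of $\alpha$, whence $R\to S=\colim_\alpha S_\alpha$ is $\DAlg$-descendable. For $\dim R\le 2$ with $f$ birational one runs a second Noetherian induction on dimension with the $\dim\le 1$ case as base: after the same reductions, $S/R$ is torsion and supported on a closed set $Z$ with $\dim Z\le 1$, and the abstract blowup square attached to $Z$ reduces $R\to S$ to the open corner, an isomorphism, and the closed corner $R/J\to S/JS$, an integral morphism of Noetherian rings of dimension $\le 1$ that is surjective on spectra, which is handled by the first case; birationality is precisely what forces $\dim Z\le 1$.

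The step I expect to be the main obstacle is the quantitative $\DAlg$-enhancement of the abstract blowup assembly. For ordinary modules an abstract blowup cover of a Noetherian ring is descendable by \cite[Prop.~11.25]{bhatt-scholze-affine}, but the argument witnessing this produces contractions only of underlying modules; promoting the abstract blowup step to a statement about towers of derived commutative rings, and --- more basic still --- bounding the accumulated descendability index as the d\'evissage is iterated, is what fails in general and survives only in low dimension, because it is only for $\dim R\le 1$ (respectively $\dim R\le 2$ with $f$ birational) that the center $Z$ of each abstract blowup lands in a range where this enhancement is available, namely $\dim Z=0$ or $\dim Z$ already covered by the inductive hypothesis. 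The second delicate point is the uniformity in $\alpha$ needed for the colimit step when $R\to S$ is not module-finite (as for $\bQ\to\overline{\bQ}$): the index bounds produced by the d\'evissage must be shown to depend only on invariants of the base --- essentially $\dim R$ and the geometry of $Z$ --- and not on the chosen module-finite subalgebra $S_\alpha$.
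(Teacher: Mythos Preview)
Your plan has a genuine gap at its core: you assume that finite faithfully flat maps are $\DAlg$-descendable of bounded index, asserting that ``the $\LSym$ and divided-power operations produce the required contractions of the relevant $\Tot$-towers as towers of algebras.'' No such result is established in the paper; this is essentially one of the open problems listed at the end of the introduction (whether faithfully flat maps of finite-type $\bF_p$-algebras are $\DAlg$-descendable, and already whether $\bF_p[x]\to\bF_p[x^{1/p^\infty}]$ is). The only general faithfully-flat result available is Theorem~\ref{thm:noetherian}, which requires $\bQ\subseteq R$ and does not deliver the uniform index-$2$ bound your colimit-over-$S_\alpha$ step needs. The same issue afflicts your proposed $\DAlg$-enhancement of the abstract blowup assembly: you correctly flag this as ``the main obstacle,'' but neither the paper nor your sketch supplies a mechanism for it, and your d\'evissage cannot close without it.

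The paper's argument is quite different in spirit. After a similar reduction to domains (via primary decomposition, the pullback squares of Corollary~\ref{cor:pullback}, and nilpotent thickenings), it handles the one-dimensional case by passing to normalizations, reducing to the situation where $R\to S$ is injective and the cokernel $S/R$ is an $R/f$-module for some nonzero $f$. The key step is then an explicit, elementary computation on $\pi_0$: the canonical map $R\to R/f\times_{S//f}S$ admits a $\DAlg$-retraction, because any element of $\pi_0(R/f\times_{S//f}S)$ maps into $R\subseteq S$ under the projection to $S$. One concludes via criterion~(iii) of Proposition~\ref{prop:equivalent}: the subcategory $\Cscr$ contains $S$, hence $S//f$, hence $R/f$ (by the lower-dimensional case applied to $R/f\to S/f$), hence the finite limit $R/f\times_{S//f}S$, hence its retract $R$. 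The two-dimensional birational case runs the same pullback-and-retract argument, using the one-dimensional case for $R/f\to S/f$. There is no index bookkeeping, no filtered-colimit step, and no general assertion about finite flat maps beyond the zero-dimensional invocation of Theorem~\ref{thm:noetherian}.
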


In characteristic $p$, we have two flavors of results. The first establishes $\DAlg$-descendability
of Frobenius for smooth algebras over perfect $\bF_p$-algebras.

\begin{proposition}[Proposition~\ref{prop:frobenius}]
    Let $k$ be a perfect commutative $\bF_p$-algebra and let $R$ be a smooth commutative $k$-algebra. The Frobenius
    $\varphi\colon R\rightarrow R$ is $\DAlg$-descendable.
\end{proposition}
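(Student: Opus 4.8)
The plan is to reduce, through a chain of formal steps, to the absolute Frobenius of $\bF_p[x]$, and then to invoke Proposition~\ref{prop:h}. I will freely use four stability properties of $\DAlg$-descendability, each of which I expect to be immediate or already available: invariance under isomorphism; stability under arbitrary base change $R\to R'$ (the functor $-\otimes_R R'$ carries the \v{C}ech nerve of $R\to S$ to that of $R'\to S\otimes_R R'$, commutes with the finite totalizations $\Tot^n$ since it is exact, and preserves pro-objects, hence carries the defining pro-equivalence to one); stability under composition; and Zariski-locality on the base (from Zariski descent for $\D(-)$ and $\DAlg(-)$, localization being compatible with \v{C}ech nerves).

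First I would reduce to a polynomial ring. Every smooth $k$-algebra is Zariski-locally \'etale over an affine space $P=k[x_1,\dots,x_n]$, say via $g\colon P\to R$, so by Zariski-locality it suffices to treat such $R$. Since Frobenius is a natural endomorphism of the identity, the square with horizontal maps $g$ and vertical maps $\varphi$ commutes; because the relative Frobenius of an \'etale morphism is an isomorphism, this square is cocartesian, so $\varphi_R$ is the base change of $\varphi_P$ along $g$, and base-change stability reduces us to $R=k[x_1,\dots,x_n]$. I would then remove $k$: writing $\varphi_k$ for the Frobenius of $k$ — bijective, as $k$ is perfect — there is a factorization $\varphi_R=\Phi_k\circ\psi$ with $\Phi_k$ the coefficientwise application of $\varphi_k$, hence an isomorphism of $k[x_1,\dots,x_n]$, and $\psi\colon x_i\mapsto x_i^p$ the base change along $\bF_p\to k$ of the absolute Frobenius $\psi_0$ of $\bF_p[x_1,\dots,x_n]$. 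So it is enough to show $\psi_0$ is $\DAlg$-descendable, and by factoring $\psi_0$ into a composite of $n$ base changes of the one-variable Frobenius $\bF_p[x]\xrightarrow{x\mapsto x^p}\bF_p[x]$ (using the tensor decomposition $\bF_p[x_1,\dots,x_n]=\bigotimes_{\bF_p}\bF_p[x_i]$ and composition-stability), it is enough to treat the one-variable case.

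Here $\bF_p[x]\xrightarrow{x\mapsto x^p}\bF_p[x]$ is a finite — hence integral — extension of the one-dimensional Noetherian ring $\bF_p[x]$, so Proposition~\ref{prop:h} applies. Alternatively, one can argue directly and thereby extract an explicit descendability index: writing $S$ for $\bF_p[x]$ regarded as an algebra over $R=\bF_p[x]$ via Frobenius, one computes $S\otimes_R S\cong S[z]/(z^p)$ with $z=x\otimes 1-1\otimes x$ (using $(a-b)^p=a^p-b^p$), and similarly each term $S^m$ of the \v{C}ech nerve is a nilpotent thickening of $S$ along its codegeneracies, with a degree-wise bound coming from $z^p=0$; since $\Tot(S^\bullet)\simeq R$ by faithfully flat descent, the content is that this totalization tower, taken now in $\DAlg_R$, is pro-constant, which one proves by d\'evissage up the powers of the diagonal ideal, using that square-zero extensions of cosimplicial derived rings are $\DAlg$-descendable over their base and that the $p$-nilpotence terminates the filtration in each degree.

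The hard part is precisely working at the level of derived commutative rings. At the level of $R$-modules the statement is trivial: a regular ring is $F$-split, so $R$ is already a direct summand of $S$ and $\varphi$ is descendable of index one — but the resulting splitting $S\to R$ is not a ring map, so it detects neither $\DAlg$- nor even $\bE_\infty$-descendability, and one is forced to analyze the full totalization tower of the \v{C}ech nerve as a tower of derived rings, using the $z^p=0$ structure. The supporting work is to establish (or locate) the stability properties above and the input that nilpotent, in particular square-zero, extensions of cosimplicial derived rings are $\DAlg$-descendable over their base.
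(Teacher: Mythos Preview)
Your reduction chain is clean and correct: Zariski-locality, base change along \'etale maps (using that the relative Frobenius of an \'etale morphism is an isomorphism), removing the perfect base $k$, and factoring through one-variable Frobenii all work as you describe. The supporting stability properties follow from Corollary~\ref{cor:zariski} together with Lemma~\ref{lem:two_three} and Lemma~\ref{lem:base_change}, so this part is fine.

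The gap is in the final step. Invoking Proposition~\ref{prop:h} for the map $\bF_p[x]\xrightarrow{x\mapsto x^p}\bF_p[x]$ is circular once you trace through that proposition's proof. After the primary-decomposition reduction (trivial here, both rings are domains) one is in the $\dim R=1$ case, where the argument passes to normalizations $R',S'$ and reduces to two things: that $R\to R'$ is $\DAlg$-descendable, and that $R'\to S'$ is $\DAlg$-descendable. But $\bF_p[x]$ is already normal, so $R=R'$, $S=S'$, and the map $R'\to S'$ is exactly the Frobenius you started with. The paper's proof at that point only records that $R'\to S'$ is faithfully flat; it does not supply an independent argument for its $\DAlg$-descendability in positive characteristic (the $\dim R=0$ step invokes Theorem~\ref{thm:noetherian}, which is a $\bQ$-statement, and the ``from this point'' argument requires a birational localization, which $x\mapsto x^p$ does not admit). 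So Proposition~\ref{prop:h} does not give you the one-variable case for free.

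This is exactly why the paper's own proof goes a different way: it factors $\varphi_{R/k}$ through the derived de Rham complex as $R^{(1)}\to\dR_{R/k}\to R$. The map $\dR_{R/k}\to R$ is $\DAlg$-descendable because the Hodge filtration is finite (Proposition~\ref{prop:eoo_nilpotence}), and $R^{(1)}\to\dR_{R/k}$ admits an honest $\DAlg$-retraction coming from the multiplicative splitting of the conjugate filtration supplied by Deligne--Illusie (a lift to $W_2(k)$ exists since $k$ is perfect). This argument is self-contained and does not touch Proposition~\ref{prop:h} at all.

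Your alternative sketch via the nilpotent structure of $S\otimes_RS\cong S[z]/(z^p)$ is pointing at something real --- the \v{C}ech nerve really is built from iterated $p$-nilpotent thickenings --- but ``square-zero extensions of cosimplicial derived rings are $\DAlg$-descendable over their base'' is not a statement in the paper, and turning the degreewise nilpotence into a pro-equivalence of $\Tot^n$'s in $\DAlg_R$ is precisely the hard part. As written this is not a proof; it would need substantial work to become one.
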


The second is about faithfully flat extensions of $p$-Boolean rings.
Say that a commutative $\bF_p$-algebra $R$ is $p$-Boolean if $x^p=x$ for all $x\in R$.
There is a notion of $p$-Boolean derived commutative $\bF_p$-algebra as in~\cite{antieau-spherical}. These form an
$\infty$-category $\DAlg_{\bF_q}^{\varphi=1}$ and there is a corresponding notion of
$p$-Boolean $\DAlg$-descendability, which implies the others.

\begin{theorem}[Corollary~\ref{cor:boolean_descendable}]\label{thm:boolean_descendable_intro}
    Let $k$ be a $p$-Boolean ring and let $R$ be a faithfully flat $p$-Boolean $k$-algebra.
    Then $k\rightarrow R$ is $p$-Boolean $\DAlg$-descendable if
    \begin{enumerate}
        \item[{\em (i)}] $k$ has cardinality at most $\aleph_n$ or $k\rightarrow R$ is
            $\aleph_n$-compact for some natural number $n$, or
        \item[{\em (ii)}] $\Spec k$ is extremally disconnected.
    \end{enumerate}
\end{theorem}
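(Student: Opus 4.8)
The plan is to work throughout via Stone duality, under which the category of $p$-Boolean rings is anti-equivalent to the category of profinite sets. Since $p$-Boolean rings are von Neumann regular, every module over $k$ is flat, so $k\to R$ is faithfully flat exactly when $\pi\colon\Spec R\to\Spec k$ is a surjection of profinite sets. What must be shown is that the \v{C}ech conerve $R^\bullet$ -- whose terms $R^{\tensor_k(\bullet+1)}$ agree with the classical, underived tensor powers, as everything is flat over $\bF_p$ and $p$-Booleanness is preserved by $\tensor_{\bF_p}$ -- has $\Tot$-tower pro-equivalent to the constant pro-object $\{k\}$ in $\Pro$ of the $\infty$-category of $p$-Boolean derived $k$-algebras of~\cite{antieau-spherical}.

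Part (ii) is the quick case and uses no cardinality hypothesis. If $\Spec k$ is extremally disconnected it is a projective object of the category of profinite sets (Gleason's theorem), so $\pi$ admits a continuous section; dualizing, and using full faithfulness of $\Spec$ on $p$-Boolean rings, this is a homomorphism $r\colon R\to k$ of $p$-Boolean rings retracting the unit $k\to R$. The inclusion of classical into derived $p$-Boolean rings is fully faithful and symmetric monoidal, so $k$ becomes a retract of $R$ in $p$-Boolean derived $k$-algebras. The retraction $r$ then equips $R^\bullet$ with extra codegeneracies -- for instance $R^{\tensor_k(n+1)}\to R^{\tensor_k n}$, $x_0\tensor\cdots\tensor x_n\mapsto r(x_0)\cdot(x_1\tensor\cdots\tensor x_n)$ -- exhibiting $R^\bullet$ as a split cosimplicial object of $p$-Boolean derived $k$-algebras, and a map that is a retract of its base in the relevant category of algebras is descendable in the corresponding sense by inspection of the split \v{C}ech conerve; hence $k\to R$ is $p$-Boolean $\DAlg$-descendable, with $n=0$.

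For part (i), under either clause $k\to R$ is descendable in $\D(k)$ by Mathew~\cite{mathew-galois}. Because the forgetful functor $\Pro(\text{$p$-Boolean derived $k$-algebras})\to\Pro(\D(k))$ is not conservative (Example~\ref{ex:not_conservative}), this does not formally give the conclusion, so instead I would transplant Mathew's cardinality/compactness argument into the $\infty$-category of $p$-Boolean derived $k$-algebras. That argument needs only that descendability is stable under composition and base change and has a cancellation/assembly property, together with a transfinite d\'evissage: write $R$ -- equivalently $\pi$ -- as an $\aleph_{n+1}$-filtered colimit of pieces defined over $p$-Boolean $k$-subalgebras of cardinality $\leq\aleph_n$ (invoking $\aleph_n$-compactness in the second clause), reduce to $\lvert k\rvert\leq\aleph_0$, and resolve the countable case by an $\omega_1$-indexed tower of ``elementary'' surjections of profinite sets, each split after one further pullback and hence $p$-Boolean $\DAlg$-descendable by part (ii). The work is to re-establish each input in the derived $p$-Boolean world: stability of $p$-Boolean $\DAlg$-descendability under filtered colimits of algebras and base change (which should hold since this $\infty$-category is presentable with exact filtered colimits), the cancellation property, and the fact that the countable base case produces a pro-equivalence of $p$-Boolean derived $k$-algebras rather than merely of $k$-modules.

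The hard part is entirely in part (i): one must verify that $p$-Boolean $\DAlg$-descendability enjoys, \emph{as a statement about pro-objects in the algebra category}, the formal properties that make Mathew's induction run -- precisely where ordinary descendability and its algebra refinements diverge -- and push the countable base case through keeping the derived-algebra structure. Part (ii) is formal once Gleason's theorem and the ``retract implies descendable'' principle are in hand; there, the only point to check is that the passage from classical to derived $p$-Boolean rings is symmetric monoidal and preserves the finite limits computing $\Tot^n$, which holds because $p$-Boolean rings are von Neumann regular.
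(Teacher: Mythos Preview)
Your treatment of case~(ii) is correct and in fact more direct than the paper's uniform route: since $R$ is a discrete $p$-Boolean ring and $\Spec k$ is extremally disconnected, classical Gleason plus Stone duality gives a $p$-Boolean ring retraction $R\to k$, and a retraction at level $\Tot^0$ already witnesses $p$-Boolean $\DAlg$-descendability. The paper instead handles both cases by a single obstruction-theoretic argument applied to $\Tot^m(R^\bullet)$ for $m\gg 0$, but for discrete $R$ in case~(ii) your shortcut is perfectly fine.

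Case~(i), however, is a plan rather than a proof, and it contains a genuine obstacle. You propose to transplant Mathew's cardinality d\'evissage into the $p$-Boolean derived category, but the ingredient you list as ``stability of $p$-Boolean $\DAlg$-descendability under filtered colimits of algebras'' is not available for any flavor of descendability: the index $N$ for which $k\to\Tot^N$ admits a retraction can drift to infinity along a filtered system, and there is no mechanism to produce a uniform $N$ in the colimit. Your countable base case via an $\omega_1$-indexed tower has the same defect. The paper does something entirely different and never tries to assemble descendability from smaller pieces. Since $k$ is absolutely flat, the fiber $I$ of $k\to R$ is $(-1)$-coconnective, hence $I^{\otimes m+1}=\fib\bigl(k\to\Tot^m(R^\bullet)\bigr)$ is $(-m-1)$-coconnective; so $\Tot^m(R^\bullet)$ is a $p$-Boolean derived $k$-algebra with $\pi_0\cong k$ and $\pi_{-i}=0$ for $1\leq i\leq m$. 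The core result (Theorem~\ref{thm:boolean}) then builds a retraction $\Tot^m(R^\bullet)\to k$ by iteratively killing the lowest nonzero negative homotopy group: the free $p$-Boolean derived $k$-algebra on a class in degree $-j$ is $k^{K(\bF_p,j)}$, which is ccff over $k$, so attaching it does not disturb $\pi_0$ and pushes the first nonvanishing negative degree strictly lower. The only obstruction at each step lives in $\Ext^{j+1}_k(\pi_{-j},k)$, which vanishes once $j$ exceeds the global dimension of $k$; Osofsky's theorem bounds this by $n+1$ when $|k|\leq\aleph_n$, and this is precisely where the cardinality hypothesis enters. (In case~(ii) the same mechanism works because extremally disconnected $p$-Boolean $k$ is self-injective, so all higher $\Ext$ into $k$ vanish.) The argument is thus obstruction theory plus a homological-dimension bound, not a transfinite d\'evissage.
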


Note that the situations of Theorem~\ref{thm:boolean_descendable_intro} are precisely those where
we know that $k\rightarrow R$ is descendable. Moreover, Aoki's example from~\cite{aoki-descendable} implies
that there are non-descendable (and hence non-$p$-Boolean $\DAlg$-descendable) faithfully flat maps of
$p$-Boolean rings.

As an application of $\bE_\infty$-descendability we prove a new Tannaka duality theorem.
Recall that in previous work~\cite{bhatt_tannaka,bhatt-halpern-leistner,dag8,stefanich-tannaka}, mapping anima $\Hom(S,X)$ are shown to be equivalent under various
conditions on $S$ and $X$ to anima of
symmetric monoidal left adjoint functors $\QCoh(X)\rightarrow\QCoh(S)$ which preserve connective objects.
We give a related result in the context of functors between categories of commutative algebras for
certain stacks in spectral algebraic geometry.

\begin{theorem}[Theorem~\ref{thm:tannaka}]\label{thm:tannaka_intro}
    Let $X$ be a quasicompact geometric stack with affine diagonal, and assume that there exists an
    fpqc surjection $p\colon U \rightarrow X$ with $U$ affine such that the commutative algebra
    $p_*(\mathcal{O}_U)$ in $\QCoh(X)$ is $\bE_\infty$-descendable. Then, for every affine scheme
    $S$, the morphism of anima
    \[
    \Hom(S, X) \rightarrow \Hom_{\Pr^L_{\CAlg(\D(\bS))/}}( \CAlg(\QCoh(X)) , \CAlg(\QCoh(S)))
    \]
    given by sending $f$ to $f^*$ is an embedding, and its image consists of those functors which preserve finite limits and connective objects.
\end{theorem}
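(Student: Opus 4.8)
The plan is to reduce the statement to a descent argument along the fixed fpqc cover $p\colon U\to X$, using $\bE_\infty$-descendability to control the cosimplicial object. Write $U^\bullet$ for the Čech nerve of $p$, so each $U^n$ is affine (by the affine-diagonal hypothesis), and $X\simeq|U^\bullet|$ as stacks. Correspondingly $\QCoh(X)\simeq\lim_{\Delta}\QCoh(U^\bullet)$ in $\Pr^L$, and hence $\CAlg(\QCoh(X))\simeq\lim_\Delta\CAlg(\QCoh(U^n))$ since $\CAlg(-)$ commutes with limits of symmetric monoidal $\infty$-categories. For an affine scheme $S$, the functor category $\Hom_{\Pr^L_{\CAlg(\D(\bS))/}}(\CAlg(\QCoh(X)),\CAlg(\QCoh(S)))$ therefore receives a natural comparison from the cosimplicial anima $[n]\mapsto\Hom_{\Pr^L_{\CAlg(\D(\bS))/}}(\CAlg(\QCoh(U^n)),\CAlg(\QCoh(S)))$; the first step is to identify each term of this cosimplicial anima with $\Hom(S,U^n)$ by the affine case of the theorem, which should be essentially formal: for $U^n=\Spec A$, a colimit-preserving symmetric monoidal functor $\CAlg(\QCoh(U^n))=\CAlg(\D(A))\to\CAlg(\D(\Oscr(S)))$ preserving finite limits and connectivity is the same datum as an $\bE_\infty$-map $A\to\Oscr(S)$, i.e.\ a point of $\Hom(S,\Spec A)=\Hom(S,U^n)$, because $\CAlg(\D(A))$ is the free such gadget on one connective object (the unit shift, or rather $A[x]$ tracking the map out of $A$).

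Next I would run the descent comparison on both sides. On the geometric side, $\Hom(S,X)\simeq\Hom(S,|U^\bullet|)\simeq|\,[n]\mapsto\Hom(S,U^n)\,|$ since $S$ is affine, hence corepresentable, so mapping out of it commutes with the geometric realization presenting $X$. On the categorical side the key input is $\bE_\infty$-descendability of $B:=p_*(\Oscr_U)\in\CAlg(\QCoh(X))$: by definition the pro-object $\{X\}\to\{\Tot^n\}$ of the Čech/$\Tot$-tower of $B$ is a pro-equivalence in $\Pro(\CAlg(\QCoh(X)))$, and by the simple consequence recorded in the introduction, applying \emph{any} functor $F\colon\CAlg(\QCoh(X))\to\Cscr$ turns this into a pro-equivalence in $\Pro(\Cscr)$. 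I would apply this with $\Cscr=\Pr^L_{\CAlg(\D(\bS))/}$ and $F=\CAlg(\QCoh(-))$ restricted along the affine maps $U^n\to X$, and then with $\Cscr$ the $\infty$-category of anima and $F=\Hom_{\Pr^L_{\CAlg(\D(\bS))/}}(-,\CAlg(\QCoh(S)))$, to get that $\Hom_{\Pr^L_{\CAlg(\D(\bS))/}}(\CAlg(\QCoh(X)),\CAlg(\QCoh(S)))$ is the totalization of the cosimplicial anima $[n]\mapsto\Hom(S,U^n)$. Matching this against the realization on the geometric side — using that totalization of the cosimplicial space $\Hom(S,U^\bullet)$ computes $\Hom(S,|U^\bullet|)$, these being dual presentations of the same object since $U^\bullet$ is a groupoid object — identifies the two sides as anima over $\Hom(S,U)^{\times\bullet}$ descent data, giving the equivalence onto the subspace of functors that descend.

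Finally I need to pin down the image. A colimit-preserving symmetric monoidal $F\colon\CAlg(\QCoh(X))\to\CAlg(\D(\Oscr(S)))$ under $\CAlg(\D(\bS))$ that preserves finite limits and connective objects restricts, along $U=\Spec A$, to such a functor on $\CAlg(\D(A))$, hence to a map $f_0\colon A\to\Oscr(S)$; the cosimplicial compatibility built into "$F$ is a functor out of the limit" exactly says the resulting maps on $U^\bullet$ assemble to descent data, i.e.\ a point of the totalization, i.e.\ a point of $\Hom(S,X)$. Conversely every $f\in\Hom(S,X)$ gives $f^*$, which manifestly preserves finite limits and connective objects and is symmetric monoidal colimit-preserving under $\CAlg(\D(\bS))$. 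That the comparison $f\mapsto f^*$ is the inverse of the descent identification, and in particular that it is an embedding (fully faithful on anima, i.e.\ an equivalence onto a subspace), follows by unwinding: two maps $S\to X$ inducing the same $f^*$ induce the same composite $S\to U^n\to X$ on a cover after pullback, hence agree. The main obstacle — and the place where the affine-diagonal and $\bE_\infty$-descendability hypotheses are genuinely used rather than formal — is verifying that "preserves finite limits and connectivity" on $\CAlg(\QCoh(X))$ is automatically compatible with the $\Tot$-tower so that the descent identification lands in exactly this subcategory and no larger one; concretely, one must check that a functor out of $\CAlg(\QCoh(X))=\lim_\Delta\CAlg(\QCoh(U^n))$ is detected, as an object of the totalization, already at the finite stages $\Tot^n$ (this is where $\bE_\infty$-descendability, via the pro-equivalence, replaces the a priori infinite limit by a finite-stage retract), and that the connectivity structure on $\QCoh(X)$ — which is *defined* by pullback along $p$, using affineness of the $U^n$ — is the one reflected by $f^*$. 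I expect the affine case and the two formal descent spectral-sequence-free arguments to be short, and this last compatibility to require the bulk of the work.
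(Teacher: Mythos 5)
Your plan has two genuine gaps, and they sit exactly where the real work of the theorem lies. First, the identification $\Hom(S,X)\simeq |\,[n]\mapsto\Hom(S,U^n)\,|$ is false for a general fpqc surjection $p\colon U\to X$: $X$ is the geometric realization of $U^\bullet$ only as an fpqc \emph{sheaf}, so mapping out of the corepresentable $S$ does not commute with this colimit --- a map $S\to X$ lifts to $U$ only after passing to a cover of $S$, not globally. For the same reason the categorical side cannot be computed as a totalization of the anima $\Hom(S,U^n)$: mapping out of the limit $\CAlg(\QCoh(X))\simeq\lim_\Delta\CAlg(\QCoh(U^\bullet))$ in $\Pr^L_{\CAlg(\D(\bS))/}$ is neither a limit nor a colimit of the termwise mapping anima, and applying a functor to the pro-equivalence $\{X\}\to\{\Tot^n\}$ does not repair this, because the pro-equivalence lives in $\Pro(\CAlg(\QCoh(X)))$ and says nothing directly about functors \emph{out of} $\CAlg(\QCoh(X))$. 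Second, and relatedly, $\bE_\infty$-descendability is not used in your argument in the way it must be: its actual role is to guarantee that an \emph{arbitrary} finite-limit-preserving, colimit-preserving point $F\colon\CAlg(\QCoh(X))\to\CAlg(\QCoh(S))$ sends the algebra $p_*(\Oscr_U)$ to an $\bE_\infty$-descendable $\Oscr(S)$-algebra (this is precisely where criterion (iii) of Proposition~\ref{prop:equivalent} --- closure under finite limits and retracts --- meets the finite-limit hypothesis in the statement), so that such an $F$ can be lifted to the cover after a descendable base change on $S$.

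The paper's proof is organized around exactly this: it introduces the prestacks $\Spec(\QCoh(X))$ and $\Spec(\CAlg(\QCoh(X)))$, both of which satisfy descent for the ($\bE_\infty$-)descendable topology, proves the affine comparison (Lemma~\ref{lem:affine}, where notably \emph{no} finite-limit or connectivity condition is needed, by passing to right adjoints and using that $\CAlg(\D(A))\to\CAlg(\D(\bS))$ is a left fibration corepresented by $A$), checks compatibility with fiber products along affine morphisms (Lemma~\ref{lem:pullback}), and then shows the comparison $\nu_X$ is an equivalence by exhibiting $q'$ as an $\bE_\infty$-descendable surjection of prestacks via the observation about $F(p_*\Oscr_U)$ (Lemma~\ref{lem:nuX}). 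The passage from $\Hom(S,X)$ to $\Spec^\cn(\QCoh(X))$ is not reproved by descent at all: it is quoted from the existing Tannaka duality theorem of~\cite{stefanich-tannaka}, and the connectivity condition is transferred between the two sides by a short square-zero-extension argument. So your proposal would in effect have to reprove Tannaka duality for $\QCoh$ by a naive Čech argument that does not converge; to fix it you would need to replace the pointwise totalization comparisons by a sheaf-level descent argument of the paper's shape, and to relocate the use of $\bE_\infty$-descendability to the lifting of arbitrary finite-limit-preserving points along $q'$.
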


The conditions of the theorem can be verified in various cases: for instance, they hold for quasicompact semiseparated schemes, and for quotients of such schemes by actions of affine algebraic groups in characteristic zero. We note that the  weaker assumption of descendability of $p_*(\Oscr_U)$ was recently studied by Jiang in~\cite{jiang_descendability}.

\medskip

Our methods are fundamentally via obstruction theory. For example, in the settings of
Theorems~\ref{thm:noetherian_intro} and~\ref{thm:boolean_descendable_intro}, we
show that for suitable $n\geq 1$ the map
$R\rightarrow\Tot^n(S^\bullet)$ admits a retraction by inductively killing negative homotopy groups
of $\Tot^n(S^\bullet)$
without altering $\pi_0$. As $R\iso\pi_0\Tot^n(S^\bullet)$ for $n\geq 1$, this proves
$\bE_\infty$-descendability. There are two subtleties. One is that in order to be able to kill
negative homotopy groups in a controlled way we require certain extension classes to vanish. This
is the source of the homological requirements in the results above. The second is that we need to
control free algebras so as to be able maintain an understanding of the results of killing classes.
In characteristic $0$ or in the $p$-Boolean cases, the cohomology of these free algebras is closely
related to the cohomology of Eilenberg--Mac Lane anima (by rational or $p$-adic homotopy theory;
see~\cite{antieau-spherical}).

As a consequence of our methods, we establish a generalization of the famous Gleason theorem, which
identifies the extremally disconnected profinite sets as the projective objects in the category of
compact Hausdorff spaces.
Say that an anima $X$ is $p$-finite if $\pi_0X$ is finite, $X$ is truncated, and each homotopy group
$\pi_i(X,x)$ for $i\geq 1$ is a finite $p$-group. Let $\Sscr_{p\fin}\subseteq\Sscr$ be the full
subcategory of $p$-finite anima and let $\Pro(\Sscr_{p\fin})$ be its pro-category. There is an
equivalence $\Pro(\Sscr_{p\fin})^\op\we\DAlg_{\bF_p}^{\varphi=1}$ which extends the classical Stone
duality between $\Pro(\Fin)^\op$ and $p$-Boolean commutative rings; see~\cite{antieau-spherical}.

\begin{theorem}[Generalized Gleason theorem, Theorem~\ref{thm:enough_projectives}]
    Suppose that $k$ is a $p$-Boolean ring and that $\Spec k$ is extremally disconnected. If
    $k\rightarrow R$ is a map of $p$-Boolean derived rings where $k\rightarrow\pi_0R$ is injective,
    then there is a retraction $R\rightarrow k$.
\end{theorem}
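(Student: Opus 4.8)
The plan is to transport the statement across the Stone-type duality $\Pro(\Sscr_{p\fin})^{\op}\we\DAlg_{\bF_p}^{\varphi=1}$ and to prove a lifting statement for sheaves of pro-$p$-finite anima over the extremally disconnected profinite set $S=\Spec k$. Let $Y$ denote the pro-$p$-finite anima dual to $R$, so that the structure map $k\to R$ corresponds to a map $f\colon Y\to S$, and a retraction $R\to k$ is the same datum as a section of $f$. Since $S$ is $0$-truncated, $f$ factors as $Y\to\pi_0Y\xrightarrow{\pi_0f}S$, and the hypothesis that $k\to\pi_0R$ is injective says precisely that $\pi_0f\colon\pi_0Y\to S$ is a surjection of profinite sets. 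Thus it suffices to produce a section $s$ of $f$, and by the factorization it is enough to lift a section of $\pi_0f$ along $Y\to\pi_0Y$.

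Since $\Spec k$ is extremally disconnected it is a projective object of the category of profinite sets by the classical Gleason theorem, so the surjection $\pi_0f\colon\pi_0Y\twoheadrightarrow S$ admits a section $s_0\colon S\to\pi_0Y$. Form the base change $Y':=Y\times_{\pi_0Y}S$ along $s_0$. As $\pi_0Y$ is $0$-truncated one checks that $\pi_0Y'\we S$ and that the fibres of $g\colon Y'\to S$ are connected (they are components of $Y$); so lifting $s_0$ along $Y\to\pi_0Y$ amounts to finding a global section of $g$, regarded as a sheaf $\Fscr$ of pro-$p$-finite anima on $S$ with connected stalks.

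Now run obstruction theory along the Postnikov tower $\Fscr\we\lim_n\tau_{\leq n}\Fscr$, noting $\tau_{\leq0}\Fscr\we\ast$. Given a global section of $\tau_{\leq n-1}\Fscr$, the obstruction to lifting it to $\tau_{\leq n}\Fscr$ lies in the sheaf cohomology group $\H^{n+1}(S,\underline{\pi}_n\Fscr)$ — in the non-abelian sense of gerbes banded by the sheaf of groups $\underline{\pi}_1\Fscr$ when $n=1$ — and, when nonempty, the space of such lifts has homotopy groups $\H^{n-i}(S,\underline{\pi}_n\Fscr)$. The decisive input is that extremal disconnectedness of $S$ forces $\H^j(S,\Gscr)=0$ for every sheaf of abelian groups $\Gscr$ and every $j\geq1$, and likewise makes every torsor under a sheaf of groups on $S$ trivial and every gerbe on $S$ neutral — all of these being avatars of Gleason's theorem and of the cohomological triviality of extremally disconnected sets. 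Hence every obstruction vanishes and every lift-space is connected and nonempty; taking the limit over the tower yields a global section of $\Fscr$, hence a global section of $g$, hence a lift $s\colon S\to Y$ of $s_0$. By construction $f\circ s=(\pi_0f)\circ(Y\to\pi_0Y)\circ s=(\pi_0f)\circ s_0=\id_S$, so $s$ is a section of $f$, and dualizing it produces the desired retraction $R\to k$.

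The step needing the most care is the obstruction-theoretic bookkeeping of the third paragraph: organizing Postnikov towers of sheaves of pro-$p$-finite anima on a profinite set, identifying the obstruction and lifting groups with the sheaf cohomology $\H^{\ast}(S,-)$ of $S$ (including the $n=1$ layer, where one must work with gerbes and non-abelian cohomology), and recording the vanishing of all of these for extremally disconnected $S$. An alternative is to argue entirely on the algebra side, building the retraction inductively up the coconnective Postnikov tower of $R$ by means of the cotangent complex of $\DAlg_{\bF_p}^{\varphi=1}$-algebras, with obstructions living in cohomology of $\Spec k$ with coefficients in the $k$-modules $\pi_{-n}R$; reconciling the two pictures is then a minor further point.
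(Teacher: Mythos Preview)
Your outline is correct and reaches the conclusion, but the organization differs from the paper's. You invoke classical Gleason first---to split the surjection $\pi_0Y\twoheadrightarrow S$---then base-change along that section to reduce to a map $Y'\to S$ with $\pi_0Y'\simeq S$ (equivalently, on the algebra side, to $k\to R'$ with $k\cong\pi_0R'$), and finally run obstruction theory to produce the section. The paper reverses the order: it first uses the obstruction-theoretic input (Theorem~\ref{thm:boolean}, packaged as Corollary~\ref{cor:maps_to_completes}) to produce a ccff map $R\to k'$ with $k'$ discrete $p$-Boolean and $\Spec k'$ extremally disconnected, and only then applies classical Gleason to retract the resulting injection $k\hookrightarrow k'$. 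Both routes rest on the same two pillars---classical Gleason and the obstruction argument behind Theorem~\ref{thm:boolean}(ii)---so the difference is largely organizational; your version is marginally more direct in that it avoids introducing an auxiliary completion $k'$ and uses the given extremal disconnectedness of $\Spec k$ for both steps.

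The one genuinely soft spot, which you yourself flag, is the topological obstruction theory in the third paragraph. Treating $Y'\to S$ as a ``sheaf of pro-$p$-finite anima on $S$'' and identifying the lifting obstructions with sheaf cohomology groups $\H^{n+1}(S,\underline{\pi}_n\Fscr)$ is not available off the shelf: $Y'\to S$ is a morphism in $\Pro(\Sscr_{p\fin})$, not an object of $\Shv(S)$, and neither relative Postnikov truncation in this pro-category nor the identification with sheaf cohomology has been set up. The algebraic alternative you mention is precisely what the paper carries out as Theorem~\ref{thm:boolean}(ii); there the decisive vanishing is that $k$ is self-injective when $\Spec k$ is extremally disconnected (Lemma~\ref{lem:injective}), which kills the relevant $\Ext^{m+1}_k(M,k)$ groups directly, rather than an appeal to general cohomological triviality of extremally disconnected spaces.
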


In the language of pro-$p$-finite anima, $\Spec R\rightarrow\Spec k$ has a section.
We can make $\Pro(\Sscr_{p\fin})$ into a site using the coherent topology, whose covers are given
by finite jointly effectively epimorphic families of maps.
As we shall see, the coherent topology on
$\Pro(\Sscr_{p\fin})$ agrees with the topology whose covers are finite families $\{T_i\rightarrow
S\}$ such that the induced map $\Oscr(S)\rightarrow\prod_i\Oscr(T_i)$ on continuous cochains with
$\bF_p$-coefficients is
ccff in the sense of~\cite{mathew-mondal}. The generalized Gleason theorem implies the following result.

\begin{theorem}[Corollary~\ref{cor:condensed}]
    \label{thm:intro_condensed}
    The $\infty$-category of hypersheaves of anima on
    $\Pro(\Sscr_{p\fin})$ with respect to the coherent topology agrees with the $\infty$-category of
    condensed anima.
\end{theorem}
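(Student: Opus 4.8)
The plan is to exhibit the extremally disconnected profinite sets as a basis of projective objects for the coherent topology on $\Pro(\Sscr_{p\fin})$ and then to rewrite the category of hypersheaves by a comparison argument. Throughout I would work under the Stone-type equivalence $\Pro(\Sscr_{p\fin})^{\op}\we\DAlg_{\bF_p}^{\varphi=1}$ of \cite{antieau-spherical}, writing $X\mapsto\Oscr(X)$ for the $\bF_p$-cochains, so that $\pi_0\Oscr(X)$ is the discrete $p$-Boolean ring $\Oscr(\pi_0X)$ attached to the profinite set of connected components of $X$. Recall from the discussion preceding the statement that the coherent topology $\tau$ is the one whose covers are the finite jointly effectively epimorphic families — equivalently, the finite families $\{T_i\to S\}$ for which $\Oscr(S)\to\prod_i\Oscr(T_i)$ is ccff in the sense of \cite{mathew-mondal} — so that in particular every effective epimorphism is a $\tau$-cover, and a map of pro-$p$-finite anima which is surjective on $\pi_0$ is effectively epimorphic. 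Write $\mathrm{ED}\subseteq\Pro(\Fin)\subseteq\Pro(\Sscr_{p\fin})$ for the full subcategory of extremally disconnected profinite sets, which is closed under finite coproducts and finite limits in $\Pro(\Sscr_{p\fin})$.

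The first step is to reread the generalized Gleason theorem (Theorem~\ref{thm:enough_projectives}) as the assertion that each $T\in\mathrm{ED}$ is $\tau$-projective: if $\{U_a\to T\}$ is a $\tau$-cover, then $\coprod_a U_a\to T$ is surjective on $\pi_0$, so $\Oscr(T)\to\pi_0\Oscr(\coprod_aU_a)$ is injective, and Theorem~\ref{thm:enough_projectives} yields a section $T\to\coprod_aU_a$; thus the cover is refined by $\{T\to T\}$. The second step is to show that every object of $\Pro(\Sscr_{p\fin})$ receives a $\tau$-cover from an object of $\mathrm{ED}$. Given $X$, set $S=\pi_0X$, pick by the classical Gleason theorem a surjection $T\twoheadrightarrow S$ with $T\in\mathrm{ED}$, and form $X'=X\times_S T$. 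Then $X'\to X$ is a $\tau$-cover, being the base change of the cover $T\twoheadrightarrow S$ along $X\to S$; and since $S$ is $0$-truncated and the fibres of $X\to S$ are connected, one has $\pi_0X'\cong T$. Now apply Theorem~\ref{thm:enough_projectives} to the canonical map $\Oscr(\pi_0X')\to\Oscr(X')$: its source is $\Oscr(T)$ with $\Spec\Oscr(T)=T$ extremally disconnected, and the induced map on $\pi_0$ is the identity, so the theorem provides a section $\sigma\colon T\to X'$ of $X'\to\pi_0X'=T$. As $\sigma$ is a section of a map which is the identity on $\pi_0$, it is itself surjective on $\pi_0$, hence effectively epimorphic and thus a $\tau$-cover, and $T\xrightarrow{\sigma}X'\to X$ is the cover sought.

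The third step is to feed these two facts into the comparison lemma for (hypercomplete) $\infty$-topoi. Since $\Pro(\Fin)$ is closed under finite limits and finite coproducts in $\Pro(\Sscr_{p\fin})$, the coherent topology restricts on $\Pro(\Fin)$ to the usual coherent topology (a finite family of maps of profinite sets is effectively epimorphic exactly when it is jointly surjective), and — by the second step, since extremally disconnected profinite sets are profinite sets — every object of $\Pro(\Sscr_{p\fin})$ is $\tau$-covered by an object of $\Pro(\Fin)$. Hence restriction along $\Pro(\Fin)\hookrightarrow\Pro(\Sscr_{p\fin})$ should induce an equivalence $\Shv^{\hyp}(\Pro(\Sscr_{p\fin}),\tau)\we\Shv^{\hyp}(\Pro(\Fin),\tau)$. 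The classical Gleason theorem identifies the target with $\Shv^{\hyp}(\mathrm{ED},\tau)$, and since by the first step every $\tau$-cover of an object of $\mathrm{ED}$ is refined by the trivial cover, a presheaf on $\mathrm{ED}$ is a hypersheaf if and only if it sends finite coproducts to finite products. Thus $\Shv^{\hyp}(\mathrm{ED},\tau)\we\Fun^{\times}(\mathrm{ED}^{\op},\Sscr)$, which by the work of Clausen and Scholze is the $\infty$-category of condensed anima; note that this category is automatically hypercomplete, since truncations and Postnikov limits of product-preserving functors are computed objectwise and remain product-preserving, so the appearance of hypersheaves on the left is consistent.

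The main obstacle is the comparison-lemma step on the large site: one must verify that the coherent topology on $\Pro(\Sscr_{p\fin})$ is a bona fide Grothendieck topology which restricts correctly to $\Pro(\Fin)$, and that the covering statement of the second step is strong enough — at the level of hypercovers, so as to resolve arbitrary, possibly non-truncated, pro-$p$-finite anima by profinite sets — for the comparison lemma to apply. This is exactly the point at which hypercompleteness is essential: the $\infty$-topos of sheaves on $\Pro(\Sscr_{p\fin})$ has objects of infinite cohomological dimension (such as $K(\bZ/p,n)$ for all $n$) and need not be hypercomplete, whereas $\Fun^{\times}(\mathrm{ED}^{\op},\Sscr)$ is. The remaining ingredients — the identification of $\Fun^{\times}(\mathrm{ED}^{\op},\Sscr)$ with condensed anima, and the ccff description of $\tau$ — are available from the cited references and the preceding discussion.
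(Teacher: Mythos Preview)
Your approach is essentially the paper's: use the generalized Gleason theorem (Theorem~\ref{thm:enough_projectives}) to see that extremally disconnected profinite sets are projective for the coherent topology, produce for each $X\in\Pro(\Sscr_{p\fin})$ a cover by such an object, and then invoke a standard basis/comparison argument for hypercomplete sheaves. Your Step~2 is exactly Corollary~\ref{cor:maps_to_completes} rewritten geometrically (base change so that $\pi_0$ becomes extremally disconnected, then apply the generalized Gleason theorem to split off the $0$-truncation).

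Two points. First, a small factual slip: extremally disconnected profinite sets are \emph{not} closed under finite limits in $\Pro(\Sscr_{p\fin})$ (already $\beta\bN\times\beta\bN$ fails to be extremally disconnected). You do not actually use this claim---your comparison step passes through $\Pro(\Fin)$, which \emph{is} closed under finite limits---so simply delete it.

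Second, the obstacle you flag in your final paragraph is real and is precisely what the paper addresses: since $\Pro(\Sscr_{p\fin})$ is large, one must check that the whole argument is compatible with cutting down to $\kappa$-cocompact objects for uncountable strong limit cardinals $\kappa$. The paper's proof observes that the covers produced in Step~2 (via Theorem~\ref{thm:boolean}) send $\kappa$-cocompact pro-$p$-finite anima to $\kappa$-cocompact extremally disconnected sets, after which the comparison reduces to the standard hypercover basis argument as in~\cite[Prop.~4.31]{bms2} or~\cite[Cor.~A.7]{aoki-tensor}. You should make this cardinality control explicit; once you do, your sketch is complete and matches the paper's proof.
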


The variant of Theorem~\ref{thm:intro_condensed} in which $p$-finite anima are
replaced by $\pi$-finite anima has recently is established by Peter Haine in forthcoming work using different methods.

Affine stacks were introduced by To\"en~\cite{toen-affines}. These are precisely the fpqc
hypersheaves which are represented by coconnective derived commutative rings;
see~\cite{mathew-mondal}. We also use the generalized Gleason theorem to prove the next corollary.

\begin{corollary}[Theorem~\ref{thm:points}]\label{cor:intro_affine}
    Let $\Xscr$ be an affine stack over a finite field $\bF_p$. If
    $\pi_0\R\Gamma(\Xscr,\Oscr)\iso\bF_p$, then $\Xscr$ admits a rational point.
\end{corollary}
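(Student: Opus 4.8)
The plan is to rephrase the statement in derived commutative algebra and then apply the generalized Gleason theorem. Since $\Xscr$ is an affine stack over $\bF_p$, we may write $\Xscr=\Spec A$ with $A=\R\Gamma(\Xscr,\Oscr)$ a coconnective derived commutative $\bF_p$-algebra satisfying $\pi_0 A\cong\bF_p$, and a rational point of $\Xscr$ is exactly a morphism $A\to\bF_p$ in $\DAlg_{\bF_p}$. Because $\bF_p$ is $p$-Boolean, every such morphism factors uniquely through the unit $A\to A^\flat$, where $A^\flat=L(A)$ and $L$ is the left adjoint of the inclusion $\DAlg^{\varphi=1}_{\bF_p}\hookrightarrow\DAlg_{\bF_p}$; so it suffices to produce a morphism $A^\flat\to\bF_p$ in $\DAlg^{\varphi=1}_{\bF_p}$. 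Under the Stone-type equivalence $\DAlg^{\varphi=1}_{\bF_p}\we\Pro(\Sscr_{p\fin})^\op$ of~\cite{antieau-spherical}, the object $A^\flat$ is the continuous $\bF_p$-cochains $\R\Gamma(Y,\bF_p)$ on the pro-$p$-finite shape $Y$ of $\Xscr$, and a morphism $A^\flat\to\bF_p$ is the same datum as a point of $Y$.

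The crucial input is that $A^\flat$ is again coconnective with $\pi_0 A^\flat\cong\bF_p$; equivalently, the pro-$p$-finite anima $Y$ is nonempty and connected. Coconnectivity is automatic, since continuous cochains on a pro-$p$-finite anima form a filtered colimit of cochains on $p$-finite anima and are therefore coconnective. For the identification of $\pi_0$ I would use that, over the field $\bF_p$, the localization $L$ is built from derived quotients, so the unit $A\to A^\flat$ is surjective on $\pi_0$, and the relations imposed in degree zero, namely $x^p=x$ for $x\in\pi_0 A$, already hold in $\pi_0 A\cong\bF_p$; hence $\pi_0 A^\flat\cong\bF_p$. Verifying this carefully — in particular ruling out that negative homotopy of $A$ resurfaces and kills $A^\flat$ — is the main obstacle, and is where the interaction between the $p$-Boolean reflection and the coconnective $t$-structure has to be understood.

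Granting the claim, apply the generalized Gleason theorem (Theorem~\ref{thm:enough_projectives}) with $k=\bF_p$. The scheme $\Spec\bF_p$ is a single point, hence extremally disconnected, and the unit map $\bF_p\to A^\flat$ is a morphism of $p$-Boolean derived rings which on $\pi_0$ is the identity of $\bF_p$, hence injective. The theorem produces a retraction $A^\flat\to\bF_p$, and composing with $A\to A^\flat$ gives the sought morphism $A\to\bF_p$, i.e.\ a rational point of $\Xscr$. Phrased via pro-$p$-finite anima, the role of the generalized Gleason theorem is precisely to upgrade the connectedness of $Y$ to the existence of a point: this is a section of $Y\to\Spec\bF_p=\ast$ along the extremally disconnected base $\ast$, which is the exact situation governed by the theorem and which is not formal for a general connected pro-$p$-finite anima.
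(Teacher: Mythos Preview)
Your overall strategy---pass from $A=\R\Gamma(\Xscr,\Oscr)$ to a $p$-Boolean derived algebra and then invoke the results of Section~\ref{sec:boolean}---is exactly the route the paper takes. The gap is precisely the step you flag as ``the main obstacle'': showing that the $p$-Booleanization $A^\flat$ still has $\pi_0\cong\bF_p$ (and in particular does not vanish). Your sketch (``$L$ is built from derived quotients, relations already hold in $\pi_0$'') is not a proof; the derived quotients involved could, a priori, create new classes in degree $-1$ that then kill $\pi_0$ in later stages of the localization, and nothing you have said rules this out. Without this, the appeal to Theorem~\ref{thm:enough_projectives} is vacuous.

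The paper resolves this not by computing the abstract left adjoint $L$, but by constructing an explicit $p$-Boolean algebra under $A$ in two concrete steps whose effect on $\pi_0$ can be tracked. First replace $A$ by $A_\perf$ (a filtered colimit along Frobenius, so still coconnective with $\pi_0\cong\bF_p$). Then form the single pushout
\[
S \;=\; A \otimes_{A\otimes_{\pi_0A}A} A,
\]
where one structure map is multiplication and the other is $\id\otimes\varphi$. The point is that $A\otimes_{\pi_0A}A\to A$ is ccff by Proposition~\ref{prop:ccff} (since $\pi_0A\cong\bF_p$ is absolutely flat and the map is an isomorphism on $\pi_0$), so its base change $A\to S$ is ccff, whence $S$ is coconnective with $\pi_0S\cong\bF_p$. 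This is the missing idea in your argument: use the ccff machinery of Section~\ref{sec:ccff} to control the Booleanization step, rather than appealing to abstract properties of $L$. Once $S$ is in hand, the paper applies Theorem~\ref{thm:boolean}(i) directly (since $\bF_p$ has finite cardinality); your use of the generalized Gleason theorem would work equally well here, but is not needed.
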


In fact, $\Xscr$ admits a rational point inside its ``spatial locus'' $\Xscr^{\varphi=1}\rightarrow\Xscr$.

As a particular instance of Corollary~\ref{cor:intro_affine}, we see that if $X$ is a smooth proper
geometrically connected scheme over a field $\bF_p$, then its unipotent homotopy type
$U(X)=\Spec\R\Gamma(X,\Oscr)$,
in the sense of Mondal--Reinecke~\cite{mondal-reinecke}, admits a rational point.

Dmitry Kubrak has pointed to us that Corollary~\ref{cor:intro_affine} can be deduced
in a different way
from the proof of~\cite[Thm.~2.4.5]{toen-affines}, which in particular shows that it holds over any
perfect field of characteristic $p$.

We consider this exploration of $\bE_\infty$-descendability to be only a first step and propose the
following open problems as subjects for further study.
\begin{enumerate}
    \item[(a)] Is there an example of a descendable map $f\colon R\rightarrow S$ of
        $\bE_\infty$-rings (or commutative rings, etc.) which is not
        $\bE_\infty$-descendable?
    \item[(b)] Specifically, as $\KO\rightarrow\KU$ is descendable by~\cite{mathew-galois}, is it
        $\bE_\infty$-descendable?
    \item[(c)] Similarly, the map $\bF_p^{\B\bZ/p^k}\rightarrow\bF_p^{\B\bZ/p}$ induced by
        $\bZ/p\subseteq\bZ/p^k$ is descendable for $k\geq 1$ by~\cite[Cor.~4.10]{mathew-galois}. Is
        it $\bE_\infty$-descendable or $\DAlg$-descendable?
    \item[(d)] Are faithfully flat maps of finite type $\bF_p$-algebras $\DAlg$-descendable?
    \item[(e)] Is $\bF_p[x]\rightarrow\bF_p[x^{1/p^\infty}]$ $\DAlg$-descendable?
\end{enumerate}

\paragraph{Outline.}
In Section~\ref{sec:descendability} we introduce $\bE_\infty$ and $\DAlg$-descendability, prove the
basic properties, and give the first examples. Section~\ref{sec:frobenius} contains the argument
that the Frobenius on smooth algebras is $\DAlg$-descendable. In Sections~\ref{sec:ccff}
and~\ref{sec:boolean} we introduce coconnectively faithfully flat maps after~\cite{mathew-mondal}
and use these to prove that many faithfully flat maps of $p$-Boolean rings are
$\DAlg^{\varphi=1}$-descendable. We turn to $\bQ$-algebras in Section~\ref{sec:noetherian}, showing that in characteristic $0$ many
faithfully flat maps are $\bE_\infty$-descendable. In Section~\ref{sec:finite}, we show that at least some $h$-covers are.
Finally, we give our results on Tannaka duality in Section~\ref{sec:tannaka}.

\paragraph{Acknowledgments.}
We thank Ko Aoki, Robert Burklund, Peter Haine, Dmitry Kubrak, Deven Manam, Akhil Mathew, Shubhodip Mondal, and Peter Scholze for
helpful conversations about Boolean rings and descent.

BA was supported by NSF grant DMS-2152235, by Simons Fellowship 00005925, by the Simons
Collaboration on Perfection, and by the hospitality of UC Berkeley. The authors thank the Max Planck Institute for Mathematics in Bonn,
where this work was largely carried out.

\section{$\bE_\infty$-descendability}\label{sec:descendability}

Let $f\colon R\rightarrow S$ be a morphism of $\bE_\infty$-rings. In~\cite{mathew-galois}, Akhil Mathew defines $f$ to be
descendable if the induced map $\{R\}\rightarrow\{\Tot^n(S^\bullet)\}_n$ is an equivalence of
pro-objects of $\D(R)$, the $\infty$-category of $R$-module spectra. Here, $S^\bullet$ is the
\v{C}ech complex of $f$, which is a cosimplicial $\bE_\infty$-ring.

\begin{definition}[$\bE_\infty$-descendability]
    Let $f\colon R\rightarrow S$ be a map of $\bE_\infty$-rings and let
    $S^\bullet$ be the \v{C}ech nerve.
    Say that $f$ is $\bE_\infty$-descendable if the
    map of towers $\{R\}\rightarrow\{\Tot^n(S^\bullet)\}_n$ is a
    pro-equivalence of pro-$\bE_\infty$-rings.
\end{definition}

It makes sense in fact to speak of $\bE_\infty$-descendable morphisms in general presentably
symmetric monoidal stable $\infty$-categories. We also use the following variant.

\begin{variant}[$\DAlg$-descendability]
    If $f\colon R\rightarrow S$ is a map of derived commutative rings in the sense of~\cite{raksit}, then we
    say that $f$ is $\DAlg$-descendable if the map of pro-towers
    $\{R\}\rightarrow\{\Tot^n(S^\bullet)\}_n$ is a pro-equivalence of
    pro-derived commutative rings. 
\end{variant}

If $f$ is $\bE_\infty$-descendable, then it is descendable,
and $\DAlg$-descendability implies $\bE_\infty$-descendability.

In the remainder of this section, several results are stated for $\bE_\infty$-descendability.
Their $\DAlg$-descendable analogues hold as well. This applies to
Proposition~\ref{prop:equivalent}, Lemma~\ref{lem:two_three}, Lemma~\ref{lem:push_forwards},
Lemma~\ref{lem:base_change}, and Proposition~\ref{prop:eoo_nilpotence}.

Here is the $\bE_\infty$-analogue of~\cite[Prop.~3.20]{mathew-galois}.

\begin{proposition}\label{prop:equivalent}
    Let $f\colon R\rightarrow S$ be a map of $\bE_\infty$-rings. Let
    $S^\bullet$ be the \v{C}ech complex. The following hypotheses are equivalent.
    \begin{enumerate}
        \item[{\em (i)}] The morphism $f$ is $\bE_\infty$-descendable.
        \item[{\em (ii)}] The map $R\rightarrow\Tot^n(S^\bullet)$ admits an
            $\bE_\infty$-retraction for some $n\geq 0$.
        \item[{\em (iii)}] If $\Cscr$ is the smallest full subcategory of $\CAlg(\D(R))$
            which contains the $\bE_\infty$-algebras which admit a map from $S$ and which is closed under finite limits and
            retractions, then $\Cscr$ contains $R$.
    \end{enumerate}
\end{proposition}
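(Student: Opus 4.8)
The plan is to establish the cycle of implications $(i)\Rightarrow(ii)\Rightarrow(iii)\Rightarrow(ii)\Rightarrow(i)$. The steps $(i)\Rightarrow(ii)$ and $(ii)\Rightarrow(iii)$ are formal, and what makes the remaining two go is the fact that, although the forgetful functor $U\colon\CAlg(\D(R))\to\D(R)$ is not conservative on pro-categories (Example~\ref{ex:not_conservative}), it is nonetheless \emph{faithful}: $\Map_{\CAlg(\D(R))}(A,B)\to\Map_{\D(R)}(A,B)$ is a monomorphism of anima for all $A,B$, and this passes to a faithful $\Pro(U)\colon\Pro(\CAlg(\D(R)))\to\Pro(\D(R))$. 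I would prove faithfulness by writing $A\we|\Sym_R(M_\bullet)|$ as a realization of free $\bE_\infty$-$R$-algebras (preserved by $U$, which preserves sifted colimits); then $\Map_{\CAlg(\D(R))}(A,B)\we\Tot_k\Map_{\D(R)}(M_k,B)$ while $\Map_{\D(R)}(A,B)\we\Tot_k\prod_{j\geq0}\Map_{\D(R)}(\Sym^j_R M_k,B)$, and levelwise $U$ is the split inclusion of the $j=1$ summand, hence a monomorphism. Since monomorphisms of anima are preserved by limits and by filtered colimits (the latter because filtered colimits commute with finite limits, so the relevant diagonals remain equivalences), this survives $\Tot_k$ and then the formula $\Map_{\Pro}(X,Y)\we\lim_j\colim_i\Map(X_i,Y_j)$, giving the claim.

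For $(i)\Rightarrow(ii)$: an inverse of the coaugmentation $c\colon\{R\}\to\{\Tot^n(S^\bullet)\}_n$ in $\Pro(\CAlg(\D(R)))$ lies in $\colim_n\Map_{\CAlg(\D(R))}(\Tot^n(S^\bullet),R)$, hence is represented by some $\rho\colon\Tot^n(S^\bullet)\to R$; since $R$ is initial in $\CAlg(\D(R))$, the composite $\rho\circ c_n$ is automatically $\id_R$. For $(ii)\Rightarrow(iii)$: each $S^{\otimes_R(k+1)}$ admits a map from $S$ and so lies in $\Cscr$; since $\Tot^n(S^\bullet)$ is a finite limit of the $S^{\otimes_R(k+1)}$ with $k\leq n$ (it is an iterated pullback through the matching objects), it too lies in $\Cscr$, and by $(ii)$ so does its retract $R$. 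For $(ii)\Rightarrow(i)$: the $\bE_\infty$-retraction $\rho$ gives compatible maps $\rho_m:=\rho\circ(\Tot^m(S^\bullet)\to\Tot^n(S^\bullet))$ for $m\geq n$, hence a map $s\colon\{\Tot^m(S^\bullet)\}_m\to\{R\}$ in $\Pro(\CAlg(\D(R)))$ with $s\circ c=\id_{\{R\}}$ (automatic, as $R$ is initial); by~\cite[Prop.~3.20]{mathew-galois}, $(ii)$ makes $R\to S$ descendable, so $U(c)$ is an equivalence in $\Pro(\D(R))$, $U(s)$ is its inverse, $U(c\circ s)=\id$, and faithfulness of $\Pro(U)$ forces $c\circ s=\id$; thus $c$ is an equivalence, which is $(i)$.

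For $(iii)\Rightarrow(ii)$: condition $(iii)$ places $R$ in the thick tensor-ideal generated by $S$ in $\D(R)$ — algebras admitting a map from $S$ are retracts of objects of the form $(-)\otimes_R S$, and the thick tensor-ideal is closed under finite limits and retracts — so $R\to S$ is descendable by~\cite[Prop.~3.20]{mathew-galois}. Let $\Escr\subseteq\CAlg(\D(R))$ be the full subcategory of $A$ with $\colim_m\Map_{\CAlg(\D(R))}(\Tot^m(S^\bullet),A)$ contractible; it is closed under finite limits and retracts (finite limits commute past the filtered colimit, and contractibility passes to limits and retracts), so it suffices to show $\Escr$ contains every $A$ admitting a map $S\to A$, since then $\Cscr\subseteq\Escr$ and $R\in\Cscr$ gives a nonempty mapping anima, i.e.\ $(ii)$. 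For such $A$, adjunction identifies $\Map_{\CAlg(\D(R))}(\Tot^m(S^\bullet),A)$ with $\Map_{\CAlg(\D(S))}(S\otimes_R\Tot^m(S^\bullet),A)$; the coaugmentation $S\to S\otimes_R\Tot^m(S^\bullet)$ has a retraction in $\CAlg(\D(S))$, namely the composite of the transition $S\otimes_R\Tot^m(S^\bullet)\to S\otimes_R S$ with the multiplication $S\otimes_R S\to S$, compatibly in $m$, while $\{S\}\to\{S\otimes_R\Tot^m(S^\bullet)\}_m$ is an equivalence of pro-$S$-modules because $R\to S$ is descendable. Running the argument of $(ii)\Rightarrow(i)$ over $S$ upgrades this to an equivalence of pro-$\bE_\infty$-$S$-algebras, whence $\colim_m\Map_{\CAlg(\D(S))}(S\otimes_R\Tot^m(S^\bullet),A)\we\Map_{\CAlg(\D(S))}(S,A)\we *$.

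I expect $(ii)\Rightarrow(i)$ to be the main obstacle: Mathew's module-level nilpotence argument does not transport directly, since $\bE_\infty$-ring maps cannot be subtracted and the forgetful functor to pro-modules is not conservative, and the faithfulness of $\Pro(U)$ is exactly what lets one verify an identity of pro-$\bE_\infty$-ring maps after forgetting to underlying pro-modules. The only other point needing care is the routine fact that $\Tot^n$ of a cosimplicial object is a finite limit.
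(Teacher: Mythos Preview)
Your argument rests on the claim that the forgetful functor $U\colon\CAlg(\D(R))\to\D(R)$ is faithful, i.e.\ that $\Map_{\CAlg(\D(R))}(A,B)\to\Map_{\D(R)}(A,B)$ is a monomorphism of anima. This is false, and the error is in the line ``levelwise $U$ is the split inclusion of the $j=1$ summand, hence a monomorphism'': in anima, split inclusions are \emph{not} monomorphisms. Already $\ast\hookrightarrow S^1$ is split by $S^1\to\ast$, yet $\ast\times_{S^1}\ast\simeq\Omega S^1\simeq\bZ$. For a counterexample to faithfulness of $U$ itself, take $R=\bQ$, $A=\Sym_\bQ\bQ=\bQ[x]$, and $B=\bQ\oplus\bQ[2]$ the trivial square-zero extension. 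On the component of the augmentation $x\mapsto 0$, the forgetful map becomes
\[
K(\bQ,2)\longrightarrow\textstyle\prod_{j\geq 1}K(\bQ,2),\qquad b\mapsto(b,0,0,\ldots)
\]
(since $b^j=0$ for $j\geq 2$ in a square-zero extension). Its homotopy fiber over the basepoint is $K\bigl(\prod_{j\geq 2}\bQ,\,1\bigr)$, which is not contractible. So $U$ is not faithful, and neither is $\Pro(U)$. This breaks your (ii)$\Rightarrow$(i), and since your (iii)$\Rightarrow$(ii) invokes ``the argument of (ii)$\Rightarrow$(i) over $S$'', it breaks that as well.

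The paper's proof of (iii)$\Rightarrow$(i) avoids any comparison between $\Pro(\CAlg)$ and $\Pro(\D(R))$. The point you are missing is that for any $T$ admitting a map from $S$, the augmented cosimplicial $\bE_\infty$-$R$-algebra $T\to T\otimes_R S^\bullet$ is \emph{split}: it is the \v{C}ech nerve in $\CAlg(\D(T))$ of $T\to T\otimes_R S$, and the multiplication $T\otimes_R S\to T$ furnishes a section, hence extra codegeneracies. Split augmented cosimplicial objects have pro-constant $\Tot$-towers in \emph{any} $\infty$-category, so $\{T\}\to\{\Tot^n(T\otimes_R S^\bullet)\}_n$ is a pro-isomorphism already in $\Pro(\CAlg(\D(R)))$, with no detour through modules. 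The class $\Dscr$ of such $T$ is then visibly closed under finite limits and retracts (pro-constancy is), so $\Cscr\subseteq\Dscr$ and in particular $R\in\Dscr$. You were in fact one step away from this in your (iii)$\Rightarrow$(ii): after the adjunction you had $\Map_{\CAlg(\D(S))}(S\otimes_R\Tot^m(S^\bullet),A)$, and it is precisely the split cosimplicial structure on $S\to S\otimes_R S^\bullet$ that makes this pro-constant in $\CAlg(\D(S))$ directly, rather than only in modules.
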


\begin{proof}
    An inverse of pro-systems gives that (i) implies (ii). As
    $\Tot^n(S^\bullet)$ is a finite limit of tensor powers of $S$, (ii) implies
    (iii). Now, assume (iii). Consider the full subcategory $\Dscr$ of
    $\CAlg(\D(R))$ on the objects $T$ such that
    $\{T\}\rightarrow\{\Tot^n(T\otimes_RS^\bullet)\}_n$ is a pro-isomorphism of
    $\bE_\infty$-$R$-algebras. We claim that $\Cscr$ is contained in $\Dscr$.
    This will imply (iii) implies (i) in light of the assumption that
    $R\in\Cscr$.

    As $S\otimes_RS^\bullet$ extends to a split cosimplicial object with limit $S$, it follows that
    $S$ is contained in $\Dscr$ and so is any
    commutative $S$-algebra (see~\cite[Ex.~3.11]{mathew-galois}).
    As pro-constant objects are closed under finite limits and retracts, $\Cscr\subseteq\Dscr$.
\end{proof}

\begin{lemma}\label{lem:two_three}
    Suppose that $R\xrightarrow{f}S\xrightarrow{g}T$ are maps of $\bE_\infty$-algebras.
    \begin{enumerate}
        \item[{\em (a)}] If $f$ and $g$ are $\bE_\infty$-descendable, then so is $g\circ f$.
        \item[{\em (b)}] If $g\circ f$ is $\bE_\infty$-descendable, then so is $f$.
    \end{enumerate}
\end{lemma}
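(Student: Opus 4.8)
The statement is the $\bE_\infty$-counterpart of the usual composition and left-cancellation properties of descendable maps (cf.\ \cite{mathew-galois}), and the plan is to deduce it from the equivalent characterizations in Proposition~\ref{prop:equivalent}, working chiefly with condition~(iii): a map $R\to S$ is $\bE_\infty$-descendable exactly when $R$ lies in the smallest full subcategory $\Cscr_S\subseteq\CAlg(\D(R))$ containing the $\bE_\infty$-$R$-algebras that admit a map from $S$ and closed under finite limits and retractions. Write $\Cscr_T\subseteq\CAlg(\D(R))$ for the analogous subcategory attached to $g\circ f$, so the goal is $R\in\Cscr_T$ in part (a) and $R\in\Cscr_S$ in part (b).

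Part (b) should be immediate. Every $\bE_\infty$-$R$-algebra admitting a map from $T$ also admits one from $S$ by precomposing with $g$, so the generators of $\Cscr_T$ are among those of $\Cscr_S$; since the two subcategories are closed under the same operations, $\Cscr_T\subseteq\Cscr_S$, and if $g\circ f$ is $\bE_\infty$-descendable then $R\in\Cscr_T\subseteq\Cscr_S$, as needed.

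For part (a), I would first apply Proposition~\ref{prop:equivalent}(ii) to $f$: for some $m$ the algebra $R$ is an $\bE_\infty$-$R$-algebra retract of $\Tot^m(S^\bullet)$, which is a finite limit of the $\bE_\infty$-$R$-algebras $S^{\otimes_R(p+1)}$ with $p\leq m$. As $\Cscr_T$ is closed under finite limits and retractions, it is enough to show each of these tensor powers lies in $\Cscr_T$. Viewing $S^{\otimes_R(p+1)}$ as an $\bE_\infty$-$S$-algebra through one of its factors, I claim that the base change of $g$ along $S\to S^{\otimes_R(p+1)}$, namely $S^{\otimes_R(p+1)}\to S^{\otimes_R(p+1)}\otimes_S T$, is again $\bE_\infty$-descendable. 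Granting this, Proposition~\ref{prop:equivalent}(ii) applied over $S^{\otimes_R(p+1)}$ makes $S^{\otimes_R(p+1)}$ a retract --- in $\CAlg(\D(S^{\otimes_R(p+1)}))$, hence also in $\CAlg(\D(R))$ --- of a finite limit of iterated tensor powers of $S^{\otimes_R(p+1)}\otimes_S T$ over $S^{\otimes_R(p+1)}$; each such tensor power, regarded as an $R$-algebra, admits a map from $T$ through $T\to S^{\otimes_R(p+1)}\otimes_S T$ and so lies in $\Cscr_T$. Since the forgetful functor $\CAlg(\D(S^{\otimes_R(p+1)}))\to\CAlg(\D(R))$ preserves limits and $\Cscr_T$ is closed under finite limits and retractions, it follows that $S^{\otimes_R(p+1)}\in\Cscr_T$; then $\Tot^m(S^\bullet)\in\Cscr_T$ and finally $R\in\Cscr_T$, which proves (a).

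The main obstacle is the claim that base change preserves $\bE_\infty$-descendability, which is essentially the instance of Lemma~\ref{lem:base_change} we need and which I would prove inline to avoid circularity. Starting from an $\bE_\infty$-$S$-algebra retraction of $S\to\Tot^n(T^\bullet)$, with $T^\bullet$ the \v{C}ech complex of $g$, provided by Proposition~\ref{prop:equivalent}(ii), one applies $(-)\otimes_S S^{\otimes_R(p+1)}$; the \v{C}ech complex commutes with this base change, but the difficulty is that $(-)\otimes_S S^{\otimes_R(p+1)}$ on commutative algebras is a left adjoint and does not formally commute with the finite limit $\Tot^n$. This is resolved by checking the comparison map $S^{\otimes_R(p+1)}\otimes_S\Tot^n(T^\bullet)\to\Tot^n(S^{\otimes_R(p+1)}\otimes_S T^\bullet)$ on underlying modules, where base change $\D(S)\to\D(S^{\otimes_R(p+1)})$ is exact --- being colimit-preserving between stable $\infty$-categories --- and therefore does commute with $\Tot^n$, and where the conservative forgetful functors to modules are compatible with both base change and totalization on $\CAlg$; hence the comparison map is an equivalence of $\bE_\infty$-algebras and the retraction base-changes. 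Everything else is formal bookkeeping with the subcategories $\Cscr_S$ and $\Cscr_T$, and the same reasoning, with $\CAlg(\D(-))$ replaced by $\DAlg(-)$, yields the $\DAlg$-descendable analogue.
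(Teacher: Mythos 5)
Your proof is correct and follows essentially the same route as the paper's: both parts are deduced from the criteria of Proposition~\ref{prop:equivalent}, with (a) reduced to placing algebras built from $S$ inside $\Cscr_T$ using descendability of $g$ after base change, and (b) an immediate application of the same proposition. The only differences are cosmetic: the paper handles (b) by composing a retraction $\Tot^n(T^\bullet)\to R$ with $\Tot^n(S^\bullet)\to\Tot^n(T^\bullet)$ rather than via your containment $\Cscr_T\subseteq\Cscr_S$, and in (a) it asserts directly that every $R$-algebra admitting a map from $S$ lies in $\Cscr_T$, leaving implicit the base-change/totalization-comparison argument that you spell out inline.
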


\begin{proof}
    Let $\Cscr_T$ be the smallest full subcategory of $\CAlg(\D(R))$ consisting of objects which admit a
    map from $T$ and which is closed under finite limits and retracts; define $\Cscr_S$ similarly. By
    $\bE_\infty$-descendability of $S\rightarrow T$, we have that $S\in\Cscr_T$ and indeed that any
    $S'\in\CAlg(\D(R))$ which admits a map from $S$ is in $\Cscr_T$. Since $\Cscr_T$ is closed under
    finite limits and retractions, it follows that
    $\Cscr_S\subseteq\Cscr_T$. As $R\in\Cscr_S$, we have that $R\in\Cscr_T$ and hence $R\rightarrow
    T$ is $\bE_\infty$-descendable by Proposition~\ref{prop:equivalent}.

    For (b), choose $n$ such that there is an $R$-algebra map $\Tot^n(T^\bullet)\rightarrow R$ such
    that the composition $R\rightarrow\Tot^n(T^\bullet)\rightarrow R$ is equivalent to the identity of $R$.
    Then, composing with $\Tot^n(S^\bullet)\rightarrow\Tot^n(T^\bullet)$, we see that
    $R\rightarrow S$ is $\bE_\infty$-descendable by Proposition~\ref{prop:equivalent}.
\end{proof}

\begin{lemma}\label{lem:push_forwards}
    Let $F\colon\Cscr\rightarrow\Dscr$ be an exact lax symmetric monoidal functor of
    symmetric monoidal stable $\infty$-categories. If $A\rightarrow B$ is $\bE_\infty$-descendable
    in $\Cscr$,
    then $F(A)\rightarrow F(B)$ is $\bE_\infty$-descendable.
\end{lemma}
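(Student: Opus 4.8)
The plan is to reduce everything to the retraction criterion of Proposition~\ref{prop:equivalent}(ii) and to transport a retraction along $F$. So suppose $A\to B$ is $\bE_\infty$-descendable in $\Cscr$. By Proposition~\ref{prop:equivalent} there is some $n\geq 0$ and an $\bE_\infty$-$A$-algebra map $r\colon\Tot^n(B^\bullet)\to A$ splitting the canonical map $A\to\Tot^n(B^\bullet)$, where $B^\bullet$ is the \v{C}ech complex of $A\to B$ computed in $\CAlg(\Cscr)$.

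First I would check that $F$ carries the relevant algebraic structure across. Since $F$ is lax symmetric monoidal, it sends $\bE_\infty$-algebras to $\bE_\infty$-algebras and (by the induced lax structure on module categories) sends $F(A)$-algebras to $F(A)$-algebras, so $F(r)\colon F(\Tot^n(B^\bullet))\to F(A)$ is a map of $\bE_\infty$-$F(A)$-algebras. The key compatibility I would then establish is that $F$ is compatible with the formation of \v{C}ech complexes and their finite totalizations: the cosimplicial object $F(B^\bullet)$ is the \v{C}ech complex of $F(A)\to F(B)$, because the $m$-th term of $B^\bullet$ is the $(m{+}1)$-fold relative tensor power $B^{\otimes_A(m{+}1)}$, and a lax symmetric monoidal functor comes equipped with natural maps $F(X)\otimes_{F(A)}F(Y)\to F(X\otimes_A Y)$; here these maps are equivalences because the relevant tensor powers of $B$ are, levelwise, iterated coproducts in $\CAlg(\Cscr)$, which are preserved by the induced functor $\CAlg(\Cscr)\to\CAlg(\Dscr)$. (More simply: each $B^{\otimes_A m}$ is built from $A$ and $B$ by finite colimits of commutative algebras, hence a pushout, and these only involve the symmetric monoidal — not the full presentable — structure.) Since $F$ is exact it commutes with the finite limit defining $\Tot^n$, so the canonical map $F(\Tot^n(B^\bullet))\to\Tot^n(F(B^\bullet))$ is an equivalence of $\bE_\infty$-$F(A)$-algebras.

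Putting this together, $F(r)$ exhibits an $\bE_\infty$-$F(A)$-algebra retraction of $F(A)\to\Tot^n(F(B)^\bullet)$, and the composite $F(A)\to\Tot^n(F(B)^\bullet)\to F(A)$ is $F$ applied to the identity, hence the identity of $F(A)$. By Proposition~\ref{prop:equivalent}(ii), applied in $\Dscr$, the map $F(A)\to F(B)$ is $\bE_\infty$-descendable.

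The only genuinely delicate point is the second step: one must be sure that the lax structure maps $F(B)^{\otimes_{F(A)}m}\to F(B^{\otimes_A m})$ are equivalences, and that $F$ really does intertwine the cosimplicial structure maps of $B^\bullet$ with those of $F(B)^\bullet$. I expect this to follow formally once one observes that $B^\bullet$ is the left Kan extension (along $\Fin_*^{\mathrm{inj}}\to\Delta$, in the usual description of the \v{C}ech nerve) of a diagram of finite coproducts of copies of $B$ relative to $A$ in $\CAlg(\Cscr)$, and that the functor $\CAlg(\Cscr)\to\CAlg(\Dscr)$ induced by the lax symmetric monoidal $F$ preserves such finite coproducts — but writing this carefully is where the work lies. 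Everything else is an immediate consequence of exactness and the retraction criterion.
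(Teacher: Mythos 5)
Your overall strategy (reduce to the retraction criterion of Proposition~\ref{prop:equivalent}(ii) and push the retraction through $F$) is the right one, and it is the paper's, but the step you yourself flag as the delicate one is where the argument breaks. You claim that the lax structure maps $F(B)^{\otimes_{F(A)}m}\rightarrow F(B^{\otimes_A m})$ are equivalences because ``the induced functor $\CAlg(\Cscr)\rightarrow\CAlg(\Dscr)$ preserves finite coproducts.'' This is false for a general lax symmetric monoidal functor: coproducts in $\CAlg$ (relative to $A$, resp.\ $F(A)$) are relative tensor products, and a lax monoidal functor only supplies a comparison map $F(X)\otimes_{F(A)}F(Y)\rightarrow F(X\otimes_A Y)$, not an equivalence -- that would require $F$ to be strong monoidal (and to preserve the relevant colimits). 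For instance, for a pushforward $p_*$ or the forgetful functor $\D(S)\rightarrow\D(R)$ along a ring map -- precisely the kind of lax functor the lemma is designed to handle, e.g.\ in Lemma~\ref{lem:base_change} and in Section~\ref{sec:tannaka} -- the comparison maps are essentially never equivalences. Consequently $F(B^\bullet)$ is \emph{not} the \v{C}ech complex of $F(A)\rightarrow F(B)$, and your identification of $\Tot^n(F(B)^\bullet)$ with $F(\Tot^n(B^\bullet))$ does not hold, so $F(r)$ as you use it does not produce a retraction of $F(A)\rightarrow\Tot^n(F(B)^\bullet)$.

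The fix is small and is what the paper does: you never need the comparison maps to be equivalences, only the natural map of cosimplicial $\bE_\infty$-algebras $F(B)^\bullet\rightarrow F(B^\bullet)$ that the lax structure provides (note the direction is exactly the useful one). Exactness of $F$ gives $F(\Tot^n(B^\bullet))\we\Tot^n(F(B^\bullet))$, so you obtain a composite
\[
\Tot^n(F(B)^\bullet)\rightarrow\Tot^n(F(B^\bullet))\we F(\Tot^n(B^\bullet))\xrightarrow{F(r)}F(A),
\]
and compatibility of the lax unit constraints shows its precomposition with $F(A)\rightarrow\Tot^n(F(B)^\bullet)$ is $F(\id_A)=\id_{F(A)}$. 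With that replacement your argument closes; as written, the equivalence claim is a genuine gap.
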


\begin{proof}
    We have $F(\Tot^n(B^\bullet))\we \Tot^n(F(B^\bullet))$ as $F$ commutes with finite limits.
    There is a natural map $F(B)^\bullet\rightarrow F(B^\bullet)$ of cosimplicial
    $\bE_\infty$-algebras in $\Dscr$ and hence a map
    $\Tot^n(F(B)^\bullet)\rightarrow F(\Tot^n(B^\bullet))\rightarrow F(A)$ whose composition with
    $F(A)\rightarrow\Tot^n(F(B)^\bullet)$ is the identity. This establishes
    $\bE_\infty$-descendability of $F(A)\rightarrow F(B)$.
\end{proof}

\begin{variant}
    If $\Cscr$ and $\Dscr$ admit notions of derived commutative ring and if
    $F$ preserves these, then Lemma~\ref{lem:push_forwards} applies in the
    $\DAlg$-descendable case too.
\end{variant}

\begin{lemma}\label{lem:base_change}
    If $R\rightarrow S$ is $\bE_\infty$-descendable and if $R\rightarrow R'$ is an arbitrary
    morphism of $\bE_\infty$-rings, then the induced morphism $R'\rightarrow S'$ is
    $\bE_\infty$-descendable, where $S'=S\otimes_RR'$.
\end{lemma}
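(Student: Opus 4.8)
The plan is to reduce the claim to the characterization in Proposition~\ref{prop:equivalent}(ii) and then exploit base change of the \v{C}ech complex. First I would observe that the \v{C}ech complex of $R'\rightarrow S'$ is obtained from that of $R\rightarrow S$ by base change: since $S'^{\otimes_{R'} n+1} \we (S^{\otimes_R n+1})\otimes_R R'$ for all $n$, we have $(S')^\bullet \we S^\bullet \otimes_R R'$ as cosimplicial $\bE_\infty$-$R'$-algebras. Because $(-)\otimes_R R'\colon\D(R)\rightarrow\D(R')$ is symmetric monoidal and exact (it commutes with the finite limits defining $\Tot^n$), it follows that $\Tot^n((S')^\bullet)\we\Tot^n(S^\bullet)\otimes_R R'$ as $\bE_\infty$-$R'$-algebras, compatibly in $n$.

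Next I would invoke $\bE_\infty$-descendability of $R\rightarrow S$ via Proposition~\ref{prop:equivalent}(ii): there is some $n\geq 0$ and an $\bE_\infty$-$R$-algebra retraction $r\colon\Tot^n(S^\bullet)\rightarrow R$ with $r\circ(R\rightarrow\Tot^n(S^\bullet))\simeq\id_R$. Applying the symmetric monoidal functor $(-)\otimes_R R'$ to this retraction and using the identification above, I get an $\bE_\infty$-$R'$-algebra map $\Tot^n((S')^\bullet)\we\Tot^n(S^\bullet)\otimes_R R'\xrightarrow{r\otimes R'} R\otimes_R R'\we R'$ which splits the natural map $R'\rightarrow\Tot^n((S')^\bullet)$. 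By Proposition~\ref{prop:equivalent} again (the implication (ii)$\Rightarrow$(i)), this shows $R'\rightarrow S'$ is $\bE_\infty$-descendable.

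Alternatively — and perhaps more cleanly — this is just a special case of Lemma~\ref{lem:push_forwards} applied to the exact symmetric monoidal functor $F = (-)\otimes_R R'\colon\D(R)\rightarrow\D(R')$, which sends $R\rightarrow S$ to $R'\rightarrow S'$; I would likely present it that way and only spell out the \v{C}ech-complex base-change identification if needed for clarity. The main point to be careful about is the coherent identification $(S')^\bullet\we S^\bullet\otimes_R R'$ of \emph{cosimplicial} $\bE_\infty$-algebras (not just levelwise), but this is formal: the \v{C}ech nerve is computed as a right Kan extension / iterated relative tensor, and relative tensor products of $\bE_\infty$-algebras base change along $R\rightarrow R'$. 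I do not anticipate a genuine obstacle here; the statement is essentially a corollary of Lemma~\ref{lem:push_forwards}, and the corresponding $\DAlg$-descendable version follows identically since $(-)\otimes_R R'$ preserves derived commutative ring structures.
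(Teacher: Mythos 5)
Your proposal is correct and matches the paper, whose entire proof of this lemma is the one-line citation of Lemma~\ref{lem:push_forwards} applied to the (strong, hence lax) symmetric monoidal exact functor $(-)\otimes_R R'$ — exactly your preferred presentation. Your first, more explicit argument is just an unwinding of that lemma's proof via Proposition~\ref{prop:equivalent}(ii), and it is also fine.
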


\begin{proof}
    This follows from Lemma~\ref{lem:push_forwards}.
\end{proof}

Now we give our first examples of $\bE_\infty$ and $\DAlg$-descendable morphisms.

\begin{example}
    If $f\colon R\rightarrow S$ admits an $\bE_\infty$-retraction, then $f$ is
    $\bE_\infty$-descendable.
\end{example}

\begin{corollary}\label{cor:zariski}
    If $R\rightarrow\prod_{i=1}^nR[1/f_i]$ is a Zariski cover, then it is
    $\DAlg$-descendable.
\end{corollary}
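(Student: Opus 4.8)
The plan is to reduce to the affine-line case and then exhibit an explicit $\DAlg$-retraction onto a finite stage of the \v{C}ech tower. First I would recall that a Zariski cover $R \to \prod_{i=1}^n R[1/f_i]$ is finite, so by Lemma~\ref{lem:two_three}(a) (in its $\DAlg$-variant) it suffices to treat each factor, or rather to handle the cover built from the ideal $(f_1,\dots,f_n) = R$; it is cleaner to argue directly with the product. The key point is that a Zariski cover is, up to refinement, a quotient by the vanishing locus complement, and the \v{C}ech complex of $R \to \prod_i R[1/f_i]$ is computed by tensor powers of $\prod_i R[1/f_i]$ over $R$, which are again localizations of $R$ at products of the $f_i$. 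So the whole tower lives among localizations of $R$, and the statement becomes a finiteness assertion about these.

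The main step is to show that $R \to \Tot^n\!\left(\left(\prod_i R[1/f_i]\right)^\bullet\right)$ admits a $\DAlg$-retraction for $n$ large enough; by the $\DAlg$-analogue of Proposition~\ref{prop:equivalent}(ii) this gives $\DAlg$-descendability. Here I would use that the ordinary (underlying $\D(R)$) retraction already exists for $n = n_0$ depending only on the number $n$ of generators: indeed the Koszul complex on $f_1,\dots,f_n$ shows the \v{C}ech-to-derived comparison is an equivalence, and the tower $\{\Tot^m\}$ is pro-constant with bound the Koszul length $n$. The extra content over the $\D(R)$-statement is to promote the $R$-module retraction to a map of derived commutative $R$-algebras. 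For this I would observe that $\Tot^{n_0}$ of the \v{C}ech complex of a Zariski cover is itself $R$ — more precisely, the augmentation $R \to \Tot^{n_0}(\cdots)$ is an equivalence of derived commutative rings, because Zariski descent is effective and the cover is a genuine open cover of $\Spec R$, so the limit of the cosimplicial diagram of structure sheaves recovers $\Oscr(\Spec R) = R$ already at a finite stage (the Čech nerve of a finite open cover has vanishing higher cohomology beyond the Koszul range, with coefficients in any quasicoherent sheaf, hence beyond that range $\Tot^m \simeq \Tot$). Then the retraction is simply the inverse of this equivalence, which is automatically a map of derived commutative rings.

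Concretely, the cleanest route: let $\mathcal{U} = \Spec R = \bigcup_i D(f_i)$ and let $S = \prod_i R[1/f_i] = \Oscr(\coprod_i D(f_i))$. The \v{C}ech complex $S^\bullet$ is the cosimplicial ring $[m] \mapsto \Oscr$ of the $(m{+}1)$-fold fibre product of $\coprod_i D(f_i)$ over $\Spec R$, all of whose terms are products of affine schemes. Since $\{D(f_i)\}$ is a finite open cover, the comparison $R \xrightarrow{\sim} \Tot(S^\bullet)$ holds, and moreover $\Tot^m(S^\bullet) \xrightarrow{\sim} \Tot(S^\bullet)$ for $m \geq n$ because the partial totalizations stabilize once past the length of the Koszul resolution of $R$ by the $f_i$ (equivalently, the connectivity of the fibre of $\Tot^m \to \Tot^{m+1}$ grows and is already an equivalence in the relevant range). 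All of these maps are maps of derived commutative rings, being induced by maps of (affine) schemes, so $R \to \Tot^n(S^\bullet)$ is an equivalence of derived commutative rings for $n \geq n_0$, and in particular splits. By the $\DAlg$-analogue of Proposition~\ref{prop:equivalent} we conclude.

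I expect the only real obstacle to be bookkeeping: namely pinning down the precise stage $n_0$ at which $\Tot^m(S^\bullet)$ stabilizes and checking that the stabilization argument — which is classical Zariski/Čech descent for the finite cover $\{D(f_i)\}$ — indeed produces an equivalence of cosimplicial-limit towers in $\DAlg$ rather than merely in $\D(R)$. But since every map in sight comes from a map of affine schemes (hence of derived commutative rings), and the limit is computed in $\DAlg$ along the conservative, limit-preserving forgetful functor to $\D(R)$, no new difficulty arises beyond the module-level statement, which is the content of classical Zariski descent together with the finite-cover Koszul bound. Alternatively, one can avoid even this by invoking Lemma~\ref{lem:push_forwards} together with the observation that any Zariski cover is obtained by applying a suitable lax symmetric monoidal functor to a universal example, but the direct argument above is more transparent.
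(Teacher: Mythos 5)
Your reduction to Proposition~\ref{prop:equivalent}(ii) is a legitimate strategy, but the way you produce the retraction rests on a false claim: it is not true that $R\rightarrow\Tot^{m}(S^\bullet)$ becomes an \emph{equivalence} (even of spectra) once $m$ exceeds the number of elements $f_i$. The fiber of $R\rightarrow\Tot^m(S^\bullet)$ is $I^{\otimes_R m+1}$ with $I=\fib(R\rightarrow\prod_i R[1/f_i])$, and this is typically nonzero for every $m$. For example, take $R=\bZ$, $f_1=2$, $f_2=3$: then $\cofib(\bZ\rightarrow\bZ[1/2]\times\bZ[1/3])$ is a nonzero torsion-free (hence flat) $\bZ$-module which is rationally $\bQ$, so all of its tensor powers are nonzero and $\pi_{-m}\Tot^m(S^\bullet)\neq 0$ for all $m\geq 1$. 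The point you are missing is that vanishing of the higher cohomology of the (alternating) \v{C}ech complex beyond degree $n-1$ does not make the partial totalizations of the cosimplicial \v{C}ech object stabilize: the layers of the $\Tot$-tower are the conormalized terms themselves, shifted, not their cohomology groups, and these do not vanish in high cosimplicial degrees (tuples with non-adjacent repeated indices survive conormalization). Descendability is the assertion that the pro-object $\{\Tot^m(S^\bullet)\}_m$ is pro-constant, i.e.\ that a retraction exists at some finite stage; it is strictly weaker than the stagewise equivalence you assert, and your ``inverse of the equivalence'' retraction therefore does not exist.

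The correct use of the finiteness of the cover is different, and is essentially the paper's one-line argument: by Zariski descent for a finite cover, $R$ is the \emph{finite} limit, over the nonempty subsets $T\subseteq\{1,\dots,n\}$, of the localizations $R[1/\prod_{i\in T}f_i]$ (the cubical/alternating \v{C}ech diagram, which really is a finite diagram — unlike the $\Tot$-tower of the cosimplicial object). Each of these localizations admits a map from $\prod_{i=1}^n R[1/f_i]$, so criterion (iii) of Proposition~\ref{prop:equivalent} (in its $\DAlg$ form) applies directly: $R$ lies in the smallest full subcategory containing the $\prod_i R[1/f_i]$-algebras and closed under finite limits and retracts. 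If you want to salvage your route through criterion (ii), you would still have to extract a retraction $\Tot^m(S^\bullet)\rightarrow R$ for some finite $m$ from this finite-limit presentation, which is exactly what the proof of (iii)$\Rightarrow$(i) in Proposition~\ref{prop:equivalent} packages for you; there is no shortcut via an equivalence at a finite stage.
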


\begin{proof}
    Indeed, $R$ is a finite limit of $\prod_{i=1}^nR[1/f_i]$-algebras.
\end{proof}

\begin{corollary}\label{cor:pullback}
    If $$\xymatrix{
        R\ar[r]\ar[d]&S_0\ar[d]\\
        S_1\ar[r]&T
    }$$ is a pullback of $\bE_\infty$-rings (or derived commutative rings), then $R\rightarrow S_0\times S_1$
    is $\bE_\infty$-descendable (or $\DAlg$-descendable).
\end{corollary}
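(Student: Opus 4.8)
The plan is to exhibit $R$ as a finite limit of $S_0\times S_1$-algebras and then invoke Proposition~\ref{prop:equivalent}(iii). Consider the full subcategory $\Cscr\subseteq\CAlg(\D(R))$ which is the smallest one containing all $\bE_\infty$-algebras admitting a map from $S_0\times S_1$ and closed under finite limits and retractions; by Proposition~\ref{prop:equivalent} it suffices to show $R\in\Cscr$.

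First I would observe that the pullback square, together with the projections $S_0\times S_1\to S_0$ and $S_0\times S_1\to S_1$, shows that $S_0$, $S_1$, and $T$ all admit maps from $S_0\times S_1$ and are therefore in $\Cscr$. Indeed each of these is a commutative $(S_0\times S_1)$-algebra: $S_0$ via the first projection, $S_1$ via the second, and $T$ via either composite $S_0\times S_1\to S_i\to T$ (the two agree since the square commutes). Hence $S_0, S_1, T\in\Cscr$.

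Next, the hypothesis that the square is a pullback of $\bE_\infty$-rings means precisely that $R\simeq S_0\times_T S_1$ as an $\bE_\infty$-ring (and the same identification holds in $\DAlg$ when the square is a pullback of derived commutative rings, since the forgetful functor $\DAlg\to\D(R)$ preserves limits and the square of underlying modules is still a pullback). This expresses $R$ as a finite limit — a pullback — of objects $S_0, S_1, T$ all lying in $\Cscr$. Since $\Cscr$ is closed under finite limits, $R\in\Cscr$, and Proposition~\ref{prop:equivalent} gives $\bE_\infty$-descendability (respectively $\DAlg$-descendability) of $R\to S_0\times S_1$.

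There is essentially no obstacle here; the only point requiring a word of care is the identification of $R$ with the pullback \emph{as a commutative (or derived commutative) ring} rather than merely as a module — but this is exactly the meaning of the hypothesis, and the forgetful functors $\CAlg(\D(R))\to\D(R)$ and $\DAlg\to\D(R)$ both preserve the relevant limits, so no subtlety arises. One could alternatively phrase the argument via Lemma~\ref{lem:push_forwards} applied to the functor $\CAlg(\D(R))\to\CAlg(\D(S_0\times S_1))$, but the direct verification of Proposition~\ref{prop:equivalent}(iii) is cleanest.
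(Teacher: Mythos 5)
Your proof is correct and follows exactly the paper's argument: the paper likewise notes that $S_0$, $S_1$, and $T$ all admit maps from $S_0\times S_1$ and then invokes the criterion of Proposition~\ref{prop:equivalent}(iii), with $R$ recovered as the finite limit $S_0\times_T S_1$. Your write-up just spells out the details the paper leaves implicit.
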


\begin{proof}
    The $\bE_\infty$-rings $S_0$, $S_1$, and $T$ all admit maps from $S_0\times
    S_1$, so the result follows by the criterion of Proposition~\ref{prop:equivalent}(iii).
\end{proof}

\begin{example}
    The corollary applies to arithmetic fracture squares, such as
    $$\xymatrix{
        \bZ\ar[r]\ar[d]&\bQ\ar[d]\\
        \bA_f\ar[r]&(\bA_f)_\bQ,
    }$$ where $\bA_f$ is the ring of finite ad\`eles.
    Thus, $\bZ\rightarrow\bQ\times\bA_f$ is $\DAlg$-descendable and hence $\bE_\infty$-descendable.
\end{example}

\begin{example}
    If $I$ is a square-zero
    ideal in an $\bE_\infty$-ring $R$ (or derived commutative ring) with quotient $R/I=\cofib(I\rightarrow R)$, then $R$ fits into a pullback
    $$\xymatrix{
        R\ar[r]\ar[d]&R/I\ar[d]^\d\\
        R/I\ar[r]^\can & R/I\oplus I[1]
    }$$
    of $\bE_\infty$-rings (or derived commutative rings),
    where the bottom horizontal arrow is the canonical inclusion into the
    trivial square-zero extension and the left vertical arrow defines the
    derivation classifying the square-zero extension. In this case, the two maps $R\rightarrow R/I$
    are equivalent so that by, Corollary~\ref{cor:pullback}, $R\rightarrow R/I$ is
    $\bE_\infty$-descendable (or $\DAlg$-descendable).
    In particular, if $R$ is a connective $\bE_\infty$-ring with only finitely many non-zero
    homotopy groups, then $R\rightarrow\pi_0R$ is $\bE_\infty$-descendable.
\end{example}

\begin{example}
    If $X$ is a connected compact anima and $R$ is a derived commutative ring, then $R^X\rightarrow R$ is $\DAlg$-descendable
    for any derived commutative ring $R$; similarly, if $R$ is $\bE_\infty$,
    then $R^X\rightarrow R$ is $\bE_\infty$-descendable.
    Indeed, $R^X$ is equivalent to $\lim_X R$ in $\CAlg_{R^X}$ and this limit is a retract of a
    finite limit.
\end{example}

Mathew gives a criterion in~\cite[Prop.~3.27]{mathew-galois}
for when a morphism $f\colon R\rightarrow S$ is descendable in terms of
the degree of nilpotence of the $S$-null morphisms in $\D(R)$. Specifically, if
the tensor-ideal $\Iscr_S$ of morphisms in $\D(R)$ which become nullhomotopic after tensoring
with $S$ (the $S$-null morphisms) has the property
that $\Iscr_S^n$ vanishes for some positive integer $n$, then $R\rightarrow S$
is descendable. In particular, the composite of $n$ maps which are $S$-null is
in fact nullhomotopic in $\D(R)$. There is an analogue in our setting. To describe it, we recall
the notion of a Smith ideal, which we will call simply an ideal. Classical references are~\cite{hovey-smith,bruner-isaksen}.

Let $\Delta^{1,\op,\vee}$ be the category $\{1\rightarrow 0\}$ with the symmetric monoidal structure
given by taking maxima. Similarly, let $\Delta^{1,\op,\wedge}$ denote the same category but with the
symmetric monoidal structure given by taking minima. Day convolution, in the sense
of~\cite{glasman} or~\cite[Sec.~2.2.6]{ha}, induces two symmetric monoidal
structures $\otimes_\max$ and $\otimes_\min$ on the functor $\infty$-category $\Fun(\Delta^{1,\op},\D(R))$.
It is not difficult to see that $\CAlg(\Fun(\Delta^{1,\op},\D(R))^{\otimes_\min}$ is equivalent to
$\CAlg(\D(R))^{\Delta^{1,\op}})$, the $\infty$-category of morphisms of
$\bE_\infty$-$R$-algebras. Moreover, taking cofibers induces a symmetric monoidal equivalence
$$\cofib\colon\Fun(\Delta^{1,\op},\D(R))^{\otimes_\max}\rightarrow\Fun(\Delta^{1,\op},\D(R))^{\otimes_\min}$$
and hence an equivalence
$$\cofib\colon\CAlg(\Fun(\Delta^{1,\op},\D(R))^{\otimes_\max})\rightarrow\CAlg(\Fun(\Delta^{1,\op},\D(R))^{\otimes_\min}).$$
An $\bE_\infty$-ideal, or simply ideal, $I\rightarrow R$ is an object of $\CAlg(\Fun(\Delta^{1,\op},\D(R))^{\otimes_\max})$.
There is similarly a notion of an ideal in a derived commutative ring.
If $R\rightarrow S$ is a morphism of $\bE_\infty$-rings, then by definition $\fib(R\rightarrow
S)\rightarrow R$ is an ideal. Conversely, if $I\rightarrow R$ is an ideal, then
$R\rightarrow\cofib(I\rightarrow R)$ admits the canonical structure of an $\bE_\infty$-$R$-algebra.

\begin{proposition}[$\bE_\infty$-nilpotence and
    $\bE_\infty$-descendability]\label{prop:eoo_nilpotence}
    Let $R\rightarrow S$ be a morphism of $\bE_\infty$-rings and let
    $I=\fib(R\rightarrow S)$. The following are equivalent:
    \begin{enumerate}
        \item[{\em (a)}] $R\rightarrow S$ is $\bE_\infty$-descendable;
        \item[{\em (b)}] the ideal $I^{\otimes n+1}\rightarrow R$ admits a map of
            ideals to $0\rightarrow R$ for some $n\geq 0$.
    \end{enumerate}
\end{proposition}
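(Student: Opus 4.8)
The plan is to reduce the statement to the retraction criterion of Proposition~\ref{prop:equivalent}(ii) by transporting everything through the symmetric monoidal equivalence $\cofib$ between ideals and morphisms of $\bE_\infty$-$R$-algebras recalled above.

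First I would set up the dictionary. Under $\cofib$, the ideal $I\rightarrow R$ corresponds to the structure morphism $R\rightarrow S$, and the zero ideal $0\rightarrow R$ corresponds to $\id\colon R\rightarrow R$ (as $\cofib(0\rightarrow R)$ is $R\rightarrow R/0$). Since $\cofib$ is symmetric monoidal, the ideal $I^{\otimes n+1}\rightarrow R$ then corresponds to the $(n+1)$-fold tensor power of the arrow $R\rightarrow S$ in $\CAlg(\D(R))^{\Delta^{1,\op}}$. The key point is to identify this power with the morphism $R\rightarrow\Tot^n(S^\bullet)$; equivalently, $\fib(R\rightarrow\Tot^n(S^\bullet))\simeq I^{\otimes n+1}$ as ideals, with transition maps $I^{\otimes n+1}\rightarrow I^{\otimes n}$ induced by $I\rightarrow R$. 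For $n=0$ this is the tautology $\fib(R\rightarrow S)=I$; in general it is the $\bE_\infty$-refinement of the cobar/Amitsur tower identification underlying~\cite[Props.~3.20 and~3.27]{mathew-galois}.

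Next I would translate ``retraction'' into the language of ideals. For any morphism $\alpha\colon R\rightarrow T$ of $\bE_\infty$-$R$-algebras with ideal $J=\fib(\alpha)$, a map of ideals $(J\rightarrow R)\rightarrow(0\rightarrow R)$ is, under $\cofib$, the same as a morphism $(R\xrightarrow{\alpha}T)\rightarrow(R\xrightarrow{\id}R)$ in $\CAlg(\D(R))^{\Delta^{1,\op}}$, that is, a commutative square whose left and bottom edges are $\id_R$; such a square is exactly an $\bE_\infty$-$R$-algebra retraction of $\alpha$. Applying this with $T=\Tot^n(S^\bullet)$ and invoking the identification of the previous paragraph, condition (b) for a given $n$ becomes precisely: $R\rightarrow\Tot^n(S^\bullet)$ admits an $\bE_\infty$-retraction. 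Since Proposition~\ref{prop:equivalent} identifies (a) with the existence of such a retraction for some $n\geq 0$, this yields the equivalence of (a) and (b). The argument runs verbatim in the $\DAlg$-case, using ideals of derived commutative rings.

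I expect the main obstacle to be the identification in the second paragraph: one must check that the $(n+1)$-fold power of $R\rightarrow S$ in the ideal formalism is $R\rightarrow\Tot^n(S^\bullet)$ \emph{as $\bE_\infty$-$R$-algebras}, not merely on underlying $R$-modules, where it is classical. I would prove this by unwinding the Day convolution computing the power and recognizing the resulting finite colimit of tensor powers of $S$ as the (fiber of the) finite totalization of the \v{C}ech complex; the symmetric monoidality of $\cofib$ then supplies the correct $\bE_\infty$-structure for free. Should this be cumbersome to write out, one can instead work with Proposition~\ref{prop:equivalent}(iii): the target of the $(n+1)$-fold power of $R\rightarrow S$ is a finite limit of tensor powers of $S$, hence lies in $\Cscr$, so a retraction onto it forces $R\in\Cscr$, while $\bE_\infty$-descendability conversely produces such a retraction through part (ii).
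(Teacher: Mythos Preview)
Your overall strategy---transport through the symmetric monoidal equivalence $\cofib$ and invoke the retraction criterion of Proposition~\ref{prop:equivalent}(ii)---is exactly what the paper does. The gap is in your ``key point'': the $(n+1)$-fold tensor power of the arrow $R\to S$ in $\CAlg(\D(R))^{\Delta^{1,\op}}$ is \emph{not} $R\to\Tot^n(S^\bullet)$. The symmetric monoidal structure on $\CAlg(\D(R))^{\Delta^{1,\op}}\simeq\CAlg(\Fun(\Delta^{1,\op},\D(R))^{\otimes_\min})$ is cocartesian, and $\otimes_\min$ is the pointwise tensor product; hence the $(n+1)$-fold power of $(R\to S)$ is $(R\to S^{\otimes_R n+1})$. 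Correspondingly, the Smith ideal $(n+1)$-power of $(I\to R)$ is $\fib(R\to S^{\otimes n+1})\to R$, not $\fib(R\to\Tot^n)\to R$. Your proposed verification (``unwinding the Day convolution'') would reveal exactly this, and the fallback via Proposition~\ref{prop:equivalent}(iii) does not rescue the converse direction: a retraction of $R\to S^{\otimes n+1}$ composed with any coface $S\to S^{\otimes n+1}$ gives a retraction of $R\to S$ itself, so the Smith-power reading of (b) is equivalent to $R\to S$ splitting, which is strictly stronger than $\bE_\infty$-descendability (e.g.\ $\bF_2\to\bF_4$).

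The paper avoids this by never claiming that $I^{\otimes n+1}$ is the Smith ideal power. It simply applies $\fib$ to a retraction $R\to\Tot^n(S^\bullet)\to R$, obtaining a morphism of ideals $\bigl(\fib(R\to\Tot^n)\to R\bigr)\to(0\to R)$, and then invokes~\cite[Prop.~2.14]{MNN17} to justify writing the source as $I^{\otimes n+1}$: that reference identifies the underlying $R$-module of $\fib(R\to\Tot^n)$ with the \emph{naive} tensor power $I\otimes_R\cdots\otimes_R I$, and the ideal structure is the one induced by being the fiber of the algebra map $R\to\Tot^n$. With this reading, both implications are immediate from the equivalence $\cofib$ and Proposition~\ref{prop:equivalent}(ii), with no further identification needed.
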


\begin{proof}
    If $R\rightarrow S$ is $\bE_\infty$-descendable, then there is a retraction of
    $\bE_\infty$-rings
    $R\rightarrow \Tot^{n}(S^\bullet)\rightarrow R$ for some $n\geq 0$.
    Taking the fibers of $R\rightarrow\Tot^{n}(S^\bullet)$ and $R\rightarrow
    R$, we obtain a morphism of ideals
    $$\xymatrix{
        I^{\otimes n+1}\ar[r]\ar[d]&R\ar@{=}[d]\\
        0\ar[r]&R,
    }$$ where we use~\cite[Prop.~2.14]{MNN17} to identify the fiber of
    $R\rightarrow\Tot^{n}(S^\bullet)$ with $I^{\otimes n+1}$ for $n\geq 0$.
    Thus, (a) implies (b). Conversely, (b) is the statement that such a square exists; taking
    horizontal cofibers yields (a).
\end{proof}

Much of this paper is dedicated to proving that known examples of descendable morphisms are
$\bE_\infty$-descendable. We do not know of a single example of a descendable morphism which is not
$\bE_\infty$-descendable. In the remainder of this section we note that there does not appear to be
a reason why that descendability implies the stronger condition of
$\bE_\infty$-descendability.

Should the forgetful functor
$\Pro(\CAlg(\D(R))) \rightarrow \Pro(\D(R))$ be conservative, descendable morphisms would be
$\bE_\infty$-descendable. This is however never the case (unless $R = 0$), as the following example shows.

 \begin{example}\label{ex:not_conservative}
 Let $R$ be a nonzero $\bE_\infty$-ring, and consider the $\bE_\infty$ $R$-algebra of dual numbers $R[\varepsilon]$
     (in other words, this is the split square zero extension of $R$ by $R$). We have a commutative square of $\infty$-categories
 \[
 \begin{tikzcd}
     \CAlg(\D({R[\varepsilon]})) \arrow{d}{} \arrow{r}{} & \D(R[\varepsilon]) \arrow{d}{} \\
     \CAlg(\D(R)) \arrow{r}{} & \D(R)
 \end{tikzcd} 
 \]
 where the maps are the evident forgetful functors. Passing to $\infty$-categories of pro-objects we obtain a commutative square
  \[
 \begin{tikzcd}
     \Pro(\CAlg(\D(R[\varepsilon]))) \arrow{d}{} \arrow{r}{} & \Pro(\D({R[\varepsilon]})) \arrow{d}{} \\
     \Pro(\CAlg(\D(R))) \arrow{r}{} & \Pro(\D(R)).
 \end{tikzcd} 
 \]
 The right vertical arrow in the above square is not conservative: for instance, the pro-$R[\varepsilon]$-module
 \[
 R \leftarrow R[-1] \leftarrow R[-2] \leftarrow \cdots 
 \]
 obtained by iterated composition of the canonical class in $\operatorname{Ext}^1_{R[\varepsilon]}(R)$ is nonzero, but its underlying pro-$R$-module is zero.
 
Passing to split square zero extensions, we deduce that the diagonal map in the above square is not conservative: the pro-$R[\varepsilon]$-algebra
\[
R[\varepsilon] \oplus R \leftarrow R[\varepsilon] \oplus R[-1] \leftarrow R[\varepsilon] \oplus R[-2] \leftarrow \cdots
\]
admits a map to $R[\varepsilon]$ which is not an isomorphism, but which becomes an isomorphism in $\Pro(\D(R))$.

Observe now that the left vertical arrow in the above square is conservative.
     Indeed, we have an equivalence $\CAlg(\D({R[\varepsilon]})) \we \CAlg(\D(R))_{R[\varepsilon]/}$,
     and hence $\Pro(\CAlg(\D({R[\varepsilon]}))) \we \Pro(\CAlg(\D(R)))_{R[\varepsilon] /}$. It
     follows that the functor $\Pro(\CAlg(\D(R))) \rightarrow \Pro(\D(R))$ is not conservative. Concretely, the pro-$R$-algebra 
\[
R[\varepsilon] \oplus R \leftarrow R[\varepsilon] \oplus R[-1] \leftarrow R[\varepsilon] \oplus R[-2] \leftarrow \cdots
\]
admits a map to $R[\varepsilon]$ which is not an isomorphism of pro-$R$-algebras but which is an isomorphism of pro-$R$-modules.
 \end{example}

Even in the case when $R$ is a (discrete) commutative ring, the counterexample to conservativity of
the forgetful functor $\Pro(\CAlg(\D(R))) \rightarrow \Pro(\D(R))$ requires one to use non-discrete
$\bE_\infty$-algebras. It turns out that in a truncated context this forgetful functor is in fact
conservative. We record this fact for future reference; it plays no role in the rest of this paper.

\begin{proposition}\label{prop:ff_when_truncated}
Let $0 \leq n < \infty$ and let $\Cscr$ be a symmetric monoidal $(n,1)$-category. Equip $\Pro(\Cscr)$ with its canonical symmetric monoidal structure (so that the tensor product preserves cofiltered limits in each variable and the inclusion $\Cscr \rightarrow \Pro(\Cscr)$ is symmetric monoidal). Then the functor $\Pro(\CAlg(\Cscr)) \rightarrow \CAlg(\Pro(\Cscr))$ obtained by pro-extension of the inclusion $\CAlg(\Cscr) \rightarrow \CAlg(\Pro(\Cscr))$ is fully faithful. 
\end{proposition}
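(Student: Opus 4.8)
The plan is to show that for any two pro-objects $\{A_i\}_{i \in I}$ and $\{B_j\}_{j \in J}$ in $\CAlg(\Cscr)$, the natural map
\[
\Map_{\Pro(\CAlg(\Cscr))}(\{A_i\}, \{B_j\}) \longrightarrow \Map_{\CAlg(\Pro(\Cscr))}(\{A_i\}, \{B_j\})
\]
is an equivalence of anima. Since $\Cscr$ is an $(n,1)$-category, both $\Pro(\CAlg(\Cscr))$ and $\CAlg(\Pro(\Cscr))$ are $(n{+}1,1)$-categories (passing to commutative algebras does not increase categorical dimension, and $\Pro$ of an $(n,1)$-category is again an $(n,1)$-category since cofiltered limits of $n$-truncated anima are $n$-truncated), so it suffices to check the statement on mapping anima. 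By the universal property of $\Pro$, the left-hand side is $\lim_j \colim_i \Map_{\CAlg(\Cscr)}(A_i, B_j)$. For the right-hand side, first note that $\CAlg(\Pro(\Cscr))$, by the stated symmetric monoidal structure, receives the tensor product as a functor preserving cofiltered limits in each variable; I claim that the forgetful functor $\CAlg(\Pro(\Cscr)) \to \Pro(\Cscr)$ together with the identification of the underlying object of a commutative algebra shows that a commutative algebra structure on $\{B_j\}$ is the same datum as a compatible system of commutative algebra structures on the $B_j$ — more precisely, that the canonical functor $\Pro(\CAlg(\Cscr)) \to \CAlg(\Pro(\Cscr))$ is essentially surjective is \emph{not} needed; only full faithfulness is.

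First I would reduce to computing $\Map_{\CAlg(\Pro(\Cscr))}(A, \{B_j\})$ when $A \in \CAlg(\Cscr)$ is a constant pro-object, using that $\{A_i\} = \lim_i A_i$ in $\Pro(\CAlg(\Cscr))$ and that both sides of the comparison map send this cofiltered limit in the source variable to the corresponding limit of anima (the left side by construction; the right side because $\CAlg(\Pro(\Cscr))$ admits cofiltered limits computed underlying-wise, and the functor $\Pro(\CAlg(\Cscr)) \to \CAlg(\Pro(\Cscr))$ preserves them, being a pro-extension). So the crux is: for $A \in \CAlg(\Cscr)$ and $\{B_j\}_j$ a pro-object, show
\[
\Map_{\CAlg(\Pro(\Cscr))}(A, \{B_j\}) \;\simeq\; \colim_j \Map_{\CAlg(\Cscr)}(A, B_j).
\]
The key point is that $A$, viewed in $\Pro(\Cscr)$, is a \emph{compact} object, in fact a \emph{cocompact} object of $\Pro(\Cscr)$ (compact object of $\Pro(\Cscr)^{\op} = \Ind(\Cscr^{\op})$), and more importantly $A$ is \emph{tiny} with respect to cofiltered limits: maps out of the constant pro-object $A$ into a cofiltered limit $\lim_j B_j$ in $\Pro(\Cscr)$ are $\colim_j \Map(A, B_j)$ by the defining universal property of the pro-category. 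The task is to upgrade this from $\Pro(\Cscr)$ to $\CAlg(\Pro(\Cscr))$.

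To do the upgrade, I would use that $\CAlg(\Pro(\Cscr))$ can be computed as a limit of a cosimplicial diagram (bar/cobar or the category of operators model of Lurie), each term of which is a finite product of copies of $\Pro(\Cscr)$, together with the fact that $A = \colim_i$ (no — $A$ constant) and the observation that $\Map_{\CAlg}(A, -)$ is the totalization of the cosimplicial anima whose terms are mapping anima in $\Pro(\Cscr)$ between finite tensor powers. Concretely: a map of commutative algebras $A \to \{B_j\}$ is a map $A \to \{B_j\}$ in $\Pro(\Cscr)$ (which is $\colim_j \Map_{\Pro(\Cscr)}(A, B_j)$, since $A$ is constant) equipped with coherences; each coherence datum involves maps $A^{\otimes k} \to \{B_j\}$ and a tower of homotopies, and since $A^{\otimes k}$ is again a \emph{constant} pro-object in $\CAlg(\Cscr)$ (hence in $\Pro(\Cscr)$), each such $\Map_{\Pro(\Cscr)}(A^{\otimes k}, \{B_j\})$ is again $\colim_j \Map_{\Cscr}(A^{\otimes k}, B_j)$. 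Because $\CAlg$ is computed as a (countable, since we're in an $(n,1)$-categorical context — really $(n{+}2)$-truncated) totalization, and filtered colimits of anima commute with finite totalizations — more precisely, with totalizations of cosimplicial anima that are eventually constant in the relevant truncation degree, which holds here since everything is $(n{+}1)$-truncated — we may pull the $\colim_j$ outside:
\[
\Map_{\CAlg(\Pro(\Cscr))}(A, \{B_j\}) \simeq \Tot\big( \colim_j \Map_{\Cscr}(A^{\otimes \bullet}, B_j)\big) \simeq \colim_j \Tot\big(\Map_{\Cscr}(A^{\otimes \bullet}, B_j)\big) \simeq \colim_j \Map_{\CAlg(\Cscr)}(A, B_j),
\]
which is exactly the left-hand side.

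\textbf{The main obstacle} is justifying the interchange of the filtered colimit $\colim_j$ with the totalization $\Tot$ defining $\CAlg$. In general filtered colimits do not commute with totalizations; the reason it works here is the truncation hypothesis: because $\Cscr$ is an $(n,1)$-category, every mapping anima $\Map_{\Cscr}(A^{\otimes k}, B_j)$ is $(n{-}1)$-truncated, so the cosimplicial anima $\Map_{\Cscr}(A^{\otimes \bullet}, B_j)$ has a totalization that agrees with its $(n{+}1)$-st partial totalization $\Tot^{n+1}$, a \emph{finite} limit, and finite limits commute with filtered colimits of anima. I would make this precise by invoking the convergence of the Bousfield--Kan / totalization spectral sequence in the truncated setting, or more cleanly by the general fact that for a cosimplicial object in $m$-truncated anima, $\Tot = \Tot^{m+1}$. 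The remaining points — that $\Pro$ of an $(n,1)$-category is an $(n,1)$-category, that the comparison functor preserves cofiltered limits, that $A^{\otimes k}$ computed in $\CAlg(\Pro(\Cscr))$ agrees with the constant pro-object on $A^{\otimes k} \in \CAlg(\Cscr)$ (which follows since the inclusion $\Cscr \to \Pro(\Cscr)$ is symmetric monoidal and $\CAlg$ preserves it) — are routine and I would state them as lemmas without detailed proof.
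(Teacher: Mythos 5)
There is a genuine gap, and it starts with the direction of the key compactness claim. The universal property of $\Pro(\Cscr)$ says that a \emph{constant} object $B$ is \emph{cocompact}: maps \emph{from} a cofiltered limit \emph{into} $B$ satisfy $\Map_{\Pro(\Cscr)}(\lim_i A_i, B)\simeq\colim_i\Map_{\Cscr}(A_i,B)$. Your proposal asserts the opposite: that maps \emph{out of} a constant $A$ \emph{into} $\{B_j\}=\lim_j B_j$ are $\colim_j\Map(A,B_j)$. That is false (they are just $\lim_j\Map(A,B_j)$, since $\{B_j\}$ really is the limit), and it also means your reduction is made in the wrong variable: since $\Map_{\Pro(\CAlg(\Cscr))}(\{A_i\},\{B_j\})=\lim_j\colim_i\Map_{\CAlg(\Cscr)}(A_i,B_j)$, one should fix the \emph{target} $B_j$ constant and prove that constant commutative algebras are cocompact in $\CAlg(\Pro(\Cscr))$, i.e.\ $\colim_i\Map_{\CAlg(\Cscr)}(A_i,B)\xrightarrow{\ \sim\ }\Map_{\CAlg(\Pro(\Cscr))}(\lim_i A_i,B)$; your stated ``crux'' $\Map_{\CAlg(\Pro(\Cscr))}(A,\{B_j\})\simeq\colim_j\Map_{\CAlg(\Cscr)}(A,B_j)$ is neither what full faithfulness requires nor true in general. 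This is exactly the paper's first reduction (``every commutative algebra in $\Cscr$ is cocompact as a commutative algebra in $\Pro(\Cscr)$'').

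Even after fixing the direction, the engine of your argument --- writing $\Map_{\CAlg}(A,B)$ as a totalization of a cosimplicial anima whose terms are mapping anima out of finite tensor powers $A^{\otimes k}$, and then commuting the filtered colimit past $\Tot$ using truncatedness --- rests on an unjustified formula. The standard descriptions of mapping anima of $\bE_\infty$-algebras are either the monadic cobar resolution, whose terms involve free commutative algebras (infinite coproducts of symmetric powers, which need not exist in $\Pro(\Cscr)$ and whose mapping anima are infinite products, blocking the interchange), or a limit over the (infinite) twisted arrow category of the category of operators; truncatedness of the terms does not let a filtered colimit pass an infinite limit (already infinite products of sets fail). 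The truncation hypothesis has to be used to make the \emph{operadic data itself} finite, and this is what the paper's proof does: in an $(n,1)$-category $\bE_\infty$-algebras are $\bE_{n+1}$-algebras, Dunn additivity reduces to associative algebras, one passes to nonunital algebras, and nonunital $\bA_\infty$-structures reduce to nonunital $\bA_{n+2}$-structures, whose category of operators is a \emph{finite} category; cocompactness of sections over a finite base is then Lemma~\ref{lem:cocompact sections} together with~\cite[Prop.~5.3.5.15]{htt}. Your ``$\Tot=\Tot^{m+1}$'' instinct is the right one, but it presupposes precisely the finite-limit description of algebra mapping anima that these reductions are needed to produce; as written, the proposal does not prove the proposition.
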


\begin{corollary}
Let $0 \leq n < \infty$ and let $\Cscr$ be a symmetric monoidal $(n,1)$-category. Then the functor $\Pro(\CAlg(\Cscr))  \rightarrow \Pro(\Cscr)$ obtained by pro-extension of the forgetful functor $\CAlg(\Cscr) \rightarrow \Cscr$ is conservative.
\end{corollary}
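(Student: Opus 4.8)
The plan is to factor the functor in the statement through $\CAlg(\Pro(\Cscr))$, using Proposition~\ref{prop:ff_when_truncated} for the first factor and the elementary conservativity of the forgetful functor out of $\CAlg$ for the second.

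First I would record two general facts. A fully faithful functor $F$ is conservative: if $F(f)$ is an equivalence with inverse $g$, then full faithfulness gives $g\simeq F(h)$, and since $F(h\circ f)$ and $F(f\circ h)$ are identities, so are $h\circ f$ and $f\circ h$ by faithfulness, whence $f$ is an equivalence. Separately, for any symmetric monoidal $\infty$-category $\Dscr$ the forgetful functor $\CAlg(\Dscr)\rightarrow\Dscr$ is conservative: a commutative-algebra object is a suitable functor $\Fin_*\rightarrow\Dscr^\otimes$, equivalences in the ambient functor $\infty$-category are detected objectwise, and by the Segal condition the value on $\langle n\rangle$ is a power of the value on $\langle 1\rangle$, so an equivalence on underlying objects is already an equivalence of commutative algebras (see also \cite[\S3.2.2]{ha}). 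Since $\Pro(\Cscr)$ admits all small limits and its tensor product preserves them variablewise, this applies to $\Dscr=\Pro(\Cscr)$: the forgetful functor $U\colon\CAlg(\Pro(\Cscr))\rightarrow\Pro(\Cscr)$ is conservative, and moreover it preserves all limits.

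Next I would identify the functor in the statement with a composite. By Proposition~\ref{prop:ff_when_truncated} there is a fully faithful functor $\iota\colon\Pro(\CAlg(\Cscr))\rightarrow\CAlg(\Pro(\Cscr))$, characterized as the essentially unique cofiltered-limit-preserving functor whose restriction along $\CAlg(\Cscr)\hookrightarrow\Pro(\CAlg(\Cscr))$ is the inclusion $\CAlg(\Cscr)\rightarrow\CAlg(\Pro(\Cscr))$. Composing with $U$, which preserves cofiltered limits, the functor $U\circ\iota$ preserves cofiltered limits and restricts on $\CAlg(\Cscr)$ to the composite $\CAlg(\Cscr)\xrightarrow{\mathrm{forget}}\Cscr\hookrightarrow\Pro(\Cscr)$. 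By the universal property of $\Pro(\CAlg(\Cscr))$ as the free completion of $\CAlg(\Cscr)$ under cofiltered limits, a cofiltered-limit-preserving functor out of it is determined by its restriction to $\CAlg(\Cscr)$; hence $U\circ\iota$ is equivalent to the pro-extension of the forgetful functor $\CAlg(\Cscr)\rightarrow\Cscr$, i.e. to the functor in the statement. Since a composite of conservative functors is conservative and $\iota$ is fully faithful, hence conservative by the first step, it follows that $U\circ\iota$ is conservative, as desired.

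The real content is Proposition~\ref{prop:ff_when_truncated}; given it, the corollary is formal. The only step that merits a moment of care is the identification of the pro-extended forgetful functor with $U\circ\iota$, and this follows from uniqueness of cofiltered-limit-preserving extensions, so I do not anticipate any genuine obstacle.
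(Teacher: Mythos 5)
Your argument is correct and matches the paper's own proof: factor the pro-extended functor as $\Pro(\CAlg(\Cscr)) \rightarrow \CAlg(\Pro(\Cscr)) \rightarrow \Pro(\Cscr)$, use Proposition~\ref{prop:ff_when_truncated} for full faithfulness (hence conservativity) of the first map and conservativity of the forgetful functor for the second. Your extra care in identifying the composite with the pro-extension via uniqueness of cofiltered-limit-preserving extensions is a point the paper leaves implicit, but it is the same route.
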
 
\begin{proof}
The functor in the statement factors as the composition
\[
\Pro(\CAlg(\Cscr)) \rightarrow \CAlg(\Pro(\Cscr)) \rightarrow \Pro(\Cscr)
\]
where the second map is the forgetful functor (which is conservative). The result now follows from an application of Proposition~\ref{prop:ff_when_truncated}, which guarantees that the first map is fully faithful.
\end{proof} 

\begin{lemma}\label{lem:cocompact sections}
Let $0 \leq n < \infty$. Let $p\colon \mathcal{E} \rightarrow \mathcal{B}$ be a cocartesian fibration of
    $(n,1)$-categories and let $p\colon \mathcal{E}' \rightarrow \mathcal{B}$ be the cocartesian
    fibration obtained from $p$ by passing to pro-categories fiberwise. Let $s$ be a section of $p$. If $\mathcal{B}$ is a finite $(n,1)$-category then $s$ is cocompact as a section of $p'$.
\end{lemma}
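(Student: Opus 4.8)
The plan is to combine the universal property of pro-categories with the standard formula for mapping anima in a category of sections of a cocartesian fibration. Write $\Gamma' = \Fun_{/\mathcal{B}}(\mathcal{B}, \mathcal{E}')$ for the $\infty$-category of sections of $p'$. The section $s$ of $p$ determines a section $s'$ of $p'$ by composing with the fibrewise inclusion $\mathcal{E} \to \mathcal{E}'$ (which over each $b$ is the Yoneda embedding $\mathcal{E}_b \hookrightarrow \Pro(\mathcal{E}_b)$), and we must show that $s'$ is cocompact in $\Gamma'$, i.e.\ that $\Map_{\Gamma'}(-, s')$ sends cofiltered limits in $\Gamma'$ to filtered colimits of anima.

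First I would record the formal inputs. By construction the fibres of $p'$ are the pro-categories $\mathcal{E}'_b = \Pro(\mathcal{E}_b)$ and the cocartesian pushforward functors of $p'$ are the pro-extensions of those of $p$; hence each fibre of $p'$ has all cofiltered limits and each pushforward preserves them. It follows that $\Gamma'$ has cofiltered limits and that they are computed pointwise in $\mathcal{E}'$. Next, for each object $b$ the object $s(b) \in \mathcal{E}_b$ lies in the image of $\mathcal{E}_b \hookrightarrow \Pro(\mathcal{E}_b)$ and is therefore cocompact there: this is precisely the identity $\Map_{\Pro(\mathcal{E}_b)}(\lim_i X_i, s(b)) \simeq \colim_i \Map_{\Pro(\mathcal{E}_b)}(X_i, s(b))$, valid for any cofiltered system $\{X_i\}$ of pro-objects. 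Finally, a pro-extension preserves cofiltered limits essentially by definition.

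Now I would invoke the twisted--arrow description of mapping anima in a section category: for a cocartesian fibration $q \colon \mathcal{F} \to \mathcal{B}$ and sections $t, u$ there is a natural equivalence
\[
\Map_{\Fun_{/\mathcal{B}}(\mathcal{B}, \mathcal{F})}(t, u) \;\simeq\; \lim_{(\beta \colon b_0 \to b_1) \in \mathrm{Tw}(\mathcal{B})} \Map_{\mathcal{F}_{b_1}}\!\big( \beta_{!}\, t(b_0),\, u(b_1) \big),
\]
where $\beta_{!}$ is the cocartesian pushforward along $\beta$ and the structure maps of the diagram are built from $t$ along source legs and from $u$ along target legs. Applying this with $\mathcal{F} = \mathcal{E}'$, $u = s'$, and $t = \lim_i t_i$ a cofiltered limit in $\Gamma'$: pointwise computation of the limit gives $t(b_0) = \lim_i t_i(b_0)$, compatibility of $\beta_{!}$ with cofiltered limits gives $\beta_{!} t(b_0) = \lim_i \beta_{!} t_i(b_0)$, and cocompactness of $s(b_1)$ in $\Pro(\mathcal{E}_{b_1})$ gives $\Map_{\Pro(\mathcal{E}_{b_1})}(\beta_{!} t(b_0), s(b_1)) \simeq \colim_i \Map_{\Pro(\mathcal{E}_{b_1})}(\beta_{!} t_i(b_0), s(b_1))$. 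Hence $\Map_{\Gamma'}(\lim_i t_i, s') \simeq \lim_{\mathrm{Tw}(\mathcal{B})} \colim_i (\cdots)$, and it remains to commute the limit past the filtered colimit.

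This last step is where the hypothesis enters, and I expect it to be the main point requiring care. Since $\mathcal{B}$ is a finite $(n,1)$-category, so is $\mathrm{Tw}(\mathcal{B})$ (for instance from the identity $\mathrm{Tw}(\mathcal{B})_m = \mathcal{B}_{2m+1}$ of simplicial sets), and a limit indexed by a finite $(n,1)$-category commutes with filtered colimits of anima — the diagram entries here being $(n-1)$-truncated, as mapping anima of the $(n,1)$-categories $\Pro(\mathcal{E}_b)$. Swapping yields $\Map_{\Gamma'}(\lim_i t_i, s') \simeq \colim_i \lim_{\mathrm{Tw}(\mathcal{B})}(\cdots) \simeq \colim_i \Map_{\Gamma'}(t_i, s')$, as required. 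For $\mathcal{B}$ a finite poset — the case one typically needs in practice, e.g.\ for truncated cosimplicial diagrams — this commutation is just the familiar compatibility of finite limits with filtered colimits of anima, and in general it rests on $\mathrm{Tw}(\mathcal{B})$ being a compact object of $\iCat$. (An alternative argument avoids the mapping formula altogether: realize $\mathcal{B}$ as a finite iterated colimit of copies of $\Delta^0$ and $\Delta^1$, note that sections over a colimit of categories form the corresponding limit, so $\Gamma'$ is a finite limit of the section categories over $\Delta^0$ and $\Delta^1$; since mapping anima and cofiltered limits in such a finite limit of categories are computed componentwise, cocompactness of $s'$ reduces to the cases $\mathcal{B} = \Delta^0$ — cocompactness of $s(b)$ in $\Pro(\mathcal{E}_b)$ — and $\mathcal{B} = \Delta^1$ — where the relevant mapping anima is a pullback and one uses that filtered colimits commute with pullbacks of anima.)
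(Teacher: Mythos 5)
Your parenthetical ``alternative argument'' is, in essence, the paper's own proof: the paper writes $\mathcal{B}$ as an iterated pushout of spans reducing to the case $\mathcal{B}=[1]$, uses that sections over a pushout form a fiber product of section categories (so that cocompactness can be checked on the images in the factors, because mapping anima there are pullbacks, cofiltered limits are computed componentwise, and filtered colimits of anima commute with pullbacks), and settles the $[1]$-case by citing \cite[Prop.~5.3.5.15]{htt} to the effect that an arrow of $\mathcal{E}$ is cocompact in $\Fun(\Delta^1,\Pro(\mathcal{E}))$; your direct pullback description of the mapping anima over $\Delta^1$ is an equivalent substitute for that citation. Your primary route, via the twisted-arrow formula $\Map_{\Fun_{/\mathcal{B}}(\mathcal{B},\mathcal{E}')}(t,s')\simeq\lim_{\mathrm{Tw}(\mathcal{B})}\Map_{\mathcal{E}'_{b_1}}(\beta_!t(b_0),s(b_1))$, is genuinely different: it avoids any cell induction and isolates the role of finiteness in a single limit--colimit interchange. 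The price is that this interchange is exactly the delicate point, and its justification depends on what ``finite $(n,1)$-category'' means. If it means compact in $\iCat$ (a retract of a finite simplicial set), then $\mathrm{Tw}(\mathcal{B})$ is again of this form and the swap is the usual commutation of finite limits with filtered colimits, with no appeal to truncatedness. But if, as the paper's proof indicates, ``finite'' means built from $\Delta^0$ and $\Delta^1$ by finitely many pushouts of $(n,1)$-categories --- a class containing categories with infinitely many morphisms, such as the free category on one endomorphism --- then $\mathrm{Tw}(\mathcal{B})$ is not finite as a simplicial set, the identity $\mathrm{Tw}(\mathcal{B})_m=\mathcal{B}_{2m+1}$ does not help, and your asserted commutation of $\lim_{\mathrm{Tw}(\mathcal{B})}$ with filtered colimits for $(n-1)$-truncated diagrams is precisely what still needs a proof (say via Bousfield--Kan/Postnikov convergence, or via the very cell decomposition the paper uses). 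You flag this yourself as the point requiring care, and your fallback argument closes the gap and coincides with the paper, so the proposal as a whole is correct; just be aware that, as written, the main route's final step is an assertion rather than an argument, and the end formula for sections of a cocartesian fibration (fiber of the functor-category end formula over $\{\id_{\mathcal{B}}\}$) also deserves a reference or a short proof.
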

\begin{proof}
Assume that $\mathcal{B}$ is the pushout of a span of $(n,1)$-categories $\mathcal{B}_0 \leftarrow \mathcal{B}_{01} \rightarrow \mathcal{B}_1$. To show that $s$ is cocompact it suffices to show that its restrictions to $\mathcal{B}_0$, $\mathcal{B}_1$ and $\mathcal{B}_{01}$ are cocompact (as sections of the corresponding base changes of $p'$). Since $\mathcal{B}$ is assumed to be finite, we may reduce to the case when $\mathcal{B}$ is the $1$-simplex $[1]$. The $(n,1)$-category of sections of $p'$ is then given by a fiber product
\[
    \{\id_{[1]}\} \times_{\Fun([1], [1])} \Fun([1], \mathcal{E}').
\]
To show that $s$ is cocompact it suffices to show that its images in the three $(n,1)$-categories
    above are cocompact. We may in this way reduce to showing that $s$ defines a cocompact arrow in
    $\mathcal{E}'$. This follows from~\cite[Prop.~5.3.5.15]{htt} since in this case $\mathcal{E}' = \Pro(\mathcal{E})$.
\end{proof}

\begin{proof}[Proof of Proposition~\ref{prop:ff_when_truncated}]
It suffices to show that every commutative algebra in $\Cscr$ is cocompact when regarded as a
    commutative algebra in $\Pro(\Cscr)$. Since $\Cscr$ and $\Pro(\Cscr)$ are $(n,1)$-categories,
    it is enough to show that every $\bE_{n+1}$-algebra in $\Cscr$ is cocompact when regarded as an
    $\bE_{n+1}$ algebra in $\Pro(\Cscr)$. Arguing inductively using the Dunn additivity theorem
    (see~\cite[Thm.~5.1.2.2]{ha}) we may
    reduce to showing that every associative algebra in $\Cscr$ is cocompact when regarded as an
    associative algebra in $\Pro(\Cscr)$. By~\cite[Thm.~5.4.3.5]{ha}, it suffices to show that every
    nonunital associative algebra in $\Cscr$ is cocompact when regarded as a nonunital associative
    algebra in $\Pro(\Cscr)$. Applying~\cite[Cor.~4.1.6.17]{ha}, we may reduce to showing that
    every nonunital $\bA_{n+2}$-algebra in $\Cscr$ is cocompact when regarded as a nonunital
    $\bA_{n+2}$-algebra in $\Pro(\Cscr)$. This follows from an application of Lemma
    \ref{lem:cocompact sections}, where $\mathcal{B}$ is the opposite of the category of nonempty
    totally ordered sets of cardinality at most $n+2$ and order preserving injections, and $p$ is
    the cocartesian fibration giving the canonical nonunital $\bA_{n+2}$-monoidal structure on
    $\Cscr$.
\end{proof}

\section{Descendability and Frobenius}\label{sec:frobenius}

Our first non-trivial example of $\DAlg$-descendability is a consequence of splitting
the conjugate filtration multiplicatively. The setting for this multiplicative splitting is that of
Raksit's thesis~\cite{raksit}. There, Raksit introduces two notions of filtered derived commutative
rings, one which captures the HKR filtration on Hochschild homology (the derived
commutative rings with respect to the derived algebraic context given by the neutral $t$-structure
on filtered complexes) and one which captures the
Hodge filtration on Hodge-complete derived de Rham cohomology (see~\cite[Def.~5.1.10]{raksit}). These are called the infinitesimal
and crystalline filtered derived commutative rings and are studied further
in forthcoming work of the first named author, where a non-complete version of the crystalline filtered derived
commutative rings is given. Forgetting filtrations produces a derived commutative ring in either
case. 

\begin{proposition}\label{prop:frobenius}
    Let $k$ be a perfect $\bF_p$-algebra and let $R$ be a smooth $k$-algebra. The Frobenius
    $\varphi\colon R\rightarrow R$ is $\DAlg$-descendable.
\end{proposition}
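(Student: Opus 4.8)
The strategy is to exploit the conjugate filtration on the relative Frobenius and show that it splits multiplicatively after finitely many stages. Recall that for $R$ smooth over a perfect $\bF_p$-algebra $k$, the Frobenius $\varphi\colon R\rightarrow R$ factors as $R\xrightarrow{\varphi_{R/k}} R^{(1)}\xrightarrow{\sim} R$, where $R^{(1)}=R\otimes_{k,\varphi_k}k$ and the second map is the isomorphism coming from perfectness of $k$; so it suffices to treat the relative Frobenius $R^{(1)}\rightarrow R$. The pushforward $\varphi_{R/k,*}\Oscr_R$, viewed as a derived commutative $R^{(1)}$-algebra, carries the \emph{conjugate filtration}, whose associated graded is $\bigoplus_{i\geq 0}\Omega^i_{R^{(1)}/k}[-i]$ by the Cartier isomorphism. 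Since $R$ is smooth of relative dimension $d$ (Zariski-locally on $\Spec R^{(1)}$), this filtration has length $d$: only the pieces $i=0,\dots,d$ are nonzero. The key input from Raksit's formalism is that this filtration can be upgraded to a filtration by \emph{derived commutative rings}: working in the infinitesimal (or crystalline) filtered derived commutative rings of~\cite[Def.~5.1.10]{raksit}, the conjugate-filtered version of $\varphi_{R/k,*}\Oscr_R$ is a filtered derived commutative $R^{(1)}$-algebra, so forgetting the filtration recovers the derived commutative ring structure while remembering that it is built in finitely many steps from the graded pieces.

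First I would reduce to a Zariski-local statement: by Corollary~\ref{cor:zariski} and Lemma~\ref{lem:two_three}, $\DAlg$-descendability can be checked after a Zariski cover of $\Spec R^{(1)}$, so we may assume $R$ is smooth over $R^{(1)}$ of constant relative dimension $d$, hence the conjugate filtration is finite of length $d+1$. Next, I would argue that a finite filtration by derived commutative rings whose graded pieces are "close to the base" forces $\DAlg$-descendability. Concretely, the filtration presents $R$ (with its $R^{(1)}$-algebra structure) as an iterated extension: there is a tower $R^{(1)}=F_{-1}\to F_0\to\cdots\to F_d=R$ of derived commutative $R^{(1)}$-algebras (the conjugate-filtration stages applied to $\Oscr_R$), and at each stage the "associated graded" is a square-zero-type extension by $\Omega^i_{R^{(1)}/k}[-i]$. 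Using the pullback-square descriptions of square-zero extensions (as in the Example following Corollary~\ref{cor:pullback}) together with Corollary~\ref{cor:pullback} and Lemma~\ref{lem:two_three}(a), each stage $F_{i-1}\to F_i$ is $\DAlg$-descendable, and composing the $d+1$ stages shows $R^{(1)}\to R$ is $\DAlg$-descendable.

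I expect the main obstacle to be making the second step precise: one must genuinely know that the conjugate filtration on $\varphi_{R/k,*}\Oscr_R$ lifts to the category of filtered derived commutative rings (so that the stages $F_i$ are derived-commutative algebra maps and the extensions are controlled by derived-commutative square-zero data rather than merely $\bE_\infty$ or additive data), and that the associated graded really is the classical de Rham complex placed in the appropriate Adams/conjugate weights. This is exactly the content of splitting the conjugate filtration multiplicatively in Raksit's setting; the cited forthcoming work on a non-complete crystalline filtered derived commutative context is presumably what supplies the finiteness (non-completeness) needed to conclude after $d$ steps rather than a limit. A secondary point requiring care is identifying the \v{C}ech/$\Tot$-tower condition of Proposition~\ref{prop:equivalent} with the existence of this finite filtration: the cleanest route is Proposition~\ref{prop:equivalent}(iii), showing $R^{(1)}$ lies in the smallest subcategory of $\CAlg(\D(R^{(1)}))$ (or its $\DAlg$-analogue) containing $R$-algebras and closed under finite limits and retracts, which follows formally once each $F_i$ is exhibited as such a finite limit.
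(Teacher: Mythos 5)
There is a genuine gap, and it sits at the heart of your mechanism. The conjugate filtration does not live on $\varphi_{R/k,*}\Oscr_R\cong R$: it is the canonical (truncation) filtration on the de Rham complex $\dR_{R/k}$, viewed as a coconnective derived commutative $R^{(1)}$-algebra, and the identification of its graded pieces with $\Omega^i_{R^{(1)}/k}[-i]$ is exactly the Cartier isomorphism computing $\H^i(\dR_{R/k})$; in particular $\H^0(\dR_{R/k})\cong R^{(1)}$ and the colimit of the filtration is $\dR_{R/k}$, not $R$. The discrete ring $R$ is finite locally free of rank $p^d$ over $R^{(1)}$ and carries no finite filtration with associated graded $\bigoplus_{i=0}^{d}\Omega^i_{R^{(1)}/k}[-i]$ (for $p>2$ already the ranks $p^d$ and $2^d$ disagree), so the tower $R^{(1)}=F_{-1}\to F_0\to\cdots\to F_d=R$ you propose to induct along does not exist. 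Even after replacing $R$ by $\dR_{R/k}$, two further problems remain: the individual stages of the conjugate filtration are not $R^{(1)}$-algebras (the filtration is multiplicative only in the filtered sense, $\F_i\cdot\F_j\subseteq\F_{i+j}$), so ``each stage is a square-zero extension of derived commutative rings'' is not meaningful; and the logic points the wrong way, since the square-zero pullback trick (the example after Corollary~\ref{cor:pullback}) proves descendability of a quotient map $A\to A/I$ by writing the \emph{source} as a finite limit of algebras under the \emph{target}, whereas a filtration building the target $\dR_{R/k}$ up from the source $R^{(1)}$ yields no such presentation and gives no descendability of $R^{(1)}\to\dR_{R/k}$ in the absence of an actual splitting.

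That splitting is precisely where the real input enters, and your chain of deductions never uses it. The paper factors the relative Frobenius as $R^{(1)}\cong\H^0(\dR_{R/k})\to\dR_{R/k}\to R$ and handles the two maps separately: the Hodge filtration is a finite filtration of $\dR_{R/k}$ by ideals with quotient $R$, so $\dR_{R/k}\to R$ is $\DAlg$-descendable by the nilpotence criterion (Proposition~\ref{prop:eoo_nilpotence}); and $R^{(1)}\to\dR_{R/k}$ admits a derived commutative retraction because the lift of $R$ and of the relative Frobenius to $W_2(k)$ furnishes, \`a la Deligne--Illusie, a filtered map $\LSym^\crys_{R^{(1)}}\Omega^1_{R^{(1)}/k}(1)\to\F^\star_\H\dR_{R/k}$ which is an equivalence after forgetting filtrations (Cartier), whereupon the zero section of the free crystalline filtered algebra retracts $\dR_{R/k}$ onto $R^{(1)}$; the two steps are then composed via Lemma~\ref{lem:two_three}(a). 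You do flag ``splitting the conjugate filtration multiplicatively'' as a likely obstacle, but your actual argument — iterated square-zero stages, each descendable, then compose — bypasses it, and without the $W_2(k)$-lift and the resulting multiplicative splitting the scheme cannot be repaired. (Your Zariski-localization step via Corollary~\ref{cor:zariski} and Lemma~\ref{lem:two_three} is fine, but it is not where the difficulty lies.)
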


\begin{proof}
    Let $\varphi_{R/k}\colon R^{(1)}\rightarrow R$ be the relative Frobenius. It is enough to show
    that $\varphi_{R/k}$ is $\DAlg$-descendable. However, the relative Frobenius factors as
    $R^{(1)}\iso\H^0(\dR_{R/K})\rightarrow\dR_{R/k}\rightarrow R$. Since $k\rightarrow R$ is smooth,
    $\dR_{R/k}\rightarrow R$ is $\DAlg$-descendable. In fact, $\F^\star_\H\dR_{R/k}\rightarrow R$
    is descendable as a map of filtered derived commutative rings since $\F^n_\H\dR_{R/k}$ vanishes
    for $n$ sufficiently large so we can use the ideal criterion,
    Proposition~\ref{prop:eoo_nilpotence}, or rather its derived commutative version. Thus, it suffices to see
    that $R^{(1)}\rightarrow\dR_{R/k}$ is $\DAlg$-descendable. However, our hypotheses on $R$ imply that
    there is a lift of $R$ to $W_2(k)$ together with a lift of the relative Frobenius. This implies
    that there is a map
    $\LSym^\crys_{R^{(1)}}\Omega^1_{R^{(1)}/k}(1)\rightarrow\F^\star_\H\dR_{R/k}$, where
    $\LSym^\crys_{R^{(1)}}\Omega^1_{R^{(1)}}$ denotes the free crystalline filtered derived
    commutative ring over $R^{(1)}$ on $\Omega^1_{R^{(1)}}$ placed in weight $1$.
    This map is not typically an equivalence of filtered derived commutative rings, but it becomes an equivalence after
    forgetting the filtrations by the Cartier isomorphism~\cite[Thm.~1.2]{deligne-illusie}. We can
    take the zero section of the filtered tangent bundle 
    to produce a filtered map $\LSym^\crys_{R^{(1)}}\Omega^1_{R^{(1)}}(1)\rightarrow R^{(1)}$. In particular,
    $R^{(1)}\rightarrow\dR_{R/k}$ admits a derived commutative $k$-algebra retraction. This
    completes the proof.
\end{proof}

\section{Coconnective faithful flatness}\label{sec:ccff}

We recall the recently described notion of coconnective faithful flatness of
Mathew--Mondal~\cite{mathew-mondal}.

\begin{definition}[Canonical $t$-structure]
    Let $R$ be an $\bE_\infty$-ring. We let $\D(R)_{\geq 0}\subseteq\D(R)$ be
    the full subcategory generated by $R$ under colimits and extensions and we let
    $\D(R)_{\leq 0}\subseteq\D(R)$ be the full subcategory of $R$-modules
    $M$ which are coconnective as spectra: every map of $R$-modules
    $R[i]\rightarrow M$ for $i$ positive is nullhomotopic; equivalently $\pi_iM=0$ for
    $i>0$. These two subcategories define a $t$-structure on $\D(R)$ which is compatible with
    filtered colimits. We refer to it as the canonical $t$-structure.
\end{definition}

\begin{example}
    If $R$ is a connective $\bE_\infty$-ring, then the connective objects in the canonical
    $t$-structure on $\D(R)$ are precisely the $R$-modules $M$ with $\pi_iM=0$ for $i<0$. The heart
    $\D(R)^\heart$ is equivalent to the abelian category $\Mod_{\pi_0R}$ in this case.
\end{example}

\begin{definition}[Bounded above Tor-amplitude]
    Let $R$ be an $\bE_\infty$-ring. An $R$-module $M$ is said to have Tor-amplitude bounded
    above by $a$ if $N\in\D(R)_{\leq 0}$ implies that $M\otimes_R N\in\D(R)_{\leq a}$. The full
    subcategory of $R$-modules with Tor-amplitude bounded above by a fixed integer $a$ is closed
    under extensions, finite limits, and filtered colimits.
\end{definition}

\begin{remark}
    If $R$ is coconnective and $M\in\D(R)$ has Tor-amplitude amplitude bounded above by $a$, then
    $M\in\D(R)_{\leq a}$.
\end{remark}

\begin{remark}
    If $R$ is a connective $\bE_\infty$-ring, then the definition of Tor-amplitude given here
    agrees with the usual definition (to be found for instance in~\cite[Def.~7.2.4.21]{ha}), which is tested only on discrete $R$-modules.
    Moreover, if $R\rightarrow S$ is a map of $\bE_\infty$-rings and $M$ is an $R$-module
    with Tor-amplitude bounded above by $a$, then $M\otimes_RS$ has Tor-amplitude bounded above by $a$.
\end{remark}

\begin{definition}[Coconnective faithful flatness]
    Let $R$ be an $\bE_\infty$-ring. An $R$-module $M$ is coconnectively flat
    (ccf) if $M\otimes_RN$ is coconnective for each coconnective $R$-module
    $M$. In other words, $M$ is ccf if the functor $M\otimes_R(-)\colon
    \D(R)\rightarrow\D(R)$ is left $t$-exact or equivalently if $M$ has Tor-amplitude bounded above
    by $0$. We say that $M$ is coconnectively
    faithfully flat (ccff) if it is ccf and if the functor
    $M\otimes_R(-)\colon\D(R)_{\leq 0}\rightarrow\D(R)_{\leq 0}$ is
    conservative.
\end{definition}

\begin{remark}
    If $R\rightarrow S$ is a map of $\bE_\infty$-rings, it is ccf (or ccff) if
    $S$ is as an $R$-module. Note that $\D(R)\rightarrow\D(S)$ is always right
    $t$-exact, so $R\rightarrow S$ is ccf if and only if
    $\D(R)\rightarrow\D(S)$ is $t$-exact for the canonical $t$-structures.
\end{remark}

Mathew and Mondal give examples of ccff maps of coconnective derived
commutative rings. For example, they show that for every coconnective derived
commutative ring $R$ there is a ccff map $R\rightarrow S$ where $S$ is
a connective derived commutative ring (an animated commutative ring).
However, it appears difficult to construct ccff maps in the world of $\bE_\infty$-ring spectra.

Recall that a commutative ring $A$ is absolutely flat if every $A$-module is flat.
Our first examples of ccff $R$-modules are in the context where $R$ is a coconnective
$\bE_\infty$-ring and $\pi_0R$ is absolutely flat.

\begin{proposition}\label{prop:ccff}
    Let $R$ be a coconnective $\bE_\infty$-ring such that $\pi_0R$ is
    absolutely flat.
    \begin{enumerate}
        \item[{\em (i)}] If $M$ is a coconnective $R$-module, then it is ccf.
        \item[{\em (ii)}] If additionally $\pi_0M$ is faithful as a $\pi_0R$-module, then $M$ is ccff.
    \end{enumerate}
\end{proposition}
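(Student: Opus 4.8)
The plan is to reduce both statements to a single degree estimate on $\Tor$-groups over the graded ring $\pi_*R$, using crucially that over the absolutely flat ring $A:=\pi_0R$ every module is flat.

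For \emph{(i)}, I would take coconnective $R$-modules $M$ and $N$, so that $\pi_*M$ and $\pi_*N$ are graded $\pi_*R$-modules concentrated in degrees $\leq 0$, and prove that $\pi_*M$ admits a resolution $F_\bullet\to\pi_*M$ by flat graded $\pi_*R$-modules with $F_s$ concentrated in internal degrees $\leq -s$. The construction: each homogeneous piece $\pi_kM$ is an $A$-module, hence $A$-flat by absolute flatness, so $F_0:=\pi_*R\otimes_A V_0$ with $V_0=\bigoplus_{k\leq 0}\pi_kM$ placed in degree $k$ is $\pi_*R$-flat (flatness is preserved by base change along $A\to\pi_*R$); it surjects onto $\pi_*M$, and since the induced map in internal degree $0$ is the identity of $\pi_0M$ its kernel lies in degrees $\leq -1$. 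The homogeneous pieces of that kernel are $A$-submodules of $A$-flat modules, hence $A$-flat again, so the construction iterates and yields the claimed resolution. Since $\Tor$ is computed by any flat resolution, $\Tor^{\pi_*R}_s(\pi_*M,\pi_*N)=H_s(F_\bullet\otimes_{\pi_*R}\pi_*N)$ is concentrated in internal degrees $\leq -s$. Feeding this into the universal coefficient spectral sequence $E^2_{s,t}=\Tor^{\pi_*R}_s(\pi_*M,\pi_*N)_t\Rightarrow\pi_{s+t}(M\otimes_RN)$ (\cite[Sec.~7.2.1]{ha}) gives $E^2_{s,t}=0$ whenever $s+t>0$; as the associated filtration of $\pi_*(M\otimes_RN)$ is exhaustive, $\pi_n(M\otimes_RN)=0$ for $n>0$, i.e.\ $M\otimes_RN$ is coconnective. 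Hence $M$ is ccf.

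For \emph{(ii)}, I would use the extra hypothesis that $\pi_0M$ is faithful over $A$; since $\pi_0M$ is automatically $A$-flat, the content we need (and, I believe, what is meant) is that $\pi_0M$ is \emph{faithfully} flat, equivalently that $\pi_0M\otimes_A(-)$ is conservative on $\Mod_A$ --- the bare condition $\operatorname{Ann}_A(\pi_0M)=0$ does not suffice, as one sees with $R=\prod_{\mathbb N}\mathbb F_2$, $M=\bigoplus_{\mathbb N}\mathbb F_2$. Since $M\otimes_R(-)$ preserves fibers and $\D(R)_{\leq 0}$ is closed under fibers, it is enough to show that $M\otimes_RN\simeq 0$ with $N\in\D(R)_{\leq 0}$ forces $N\simeq 0$. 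If $N\neq 0$, let $j\leq 0$ be its top nonvanishing homotopy degree; replacing $N$ by $N[-j]\in\D(R)_{\leq 0}$, which only shifts $M\otimes_RN$, I may assume $\pi_0N\neq 0$. In the universal coefficient spectral sequence for $M\otimes_RN$ one has $E^2_{0,0}=(\pi_*M\otimes_{\pi_*R}\pi_*N)_0=\pi_0M\otimes_A\pi_0N$, nonzero by faithful flatness, and this corner supports no differentials: outgoing ones land in the vanishing region $s<0$, while an incoming $d_r$ would emanate from $\Tor^{\pi_*R}_r(\pi_*M,\pi_*N)_{1-r}$, which is zero because $1-r>-r$ and $\Tor_r$ lives in internal degrees $\leq -r$ by the estimate from part \emph{(i)}. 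Therefore $E^\infty_{0,0}=E^2_{0,0}\neq 0$, and being a subquotient of $\pi_0(M\otimes_RN)$ it shows $M\otimes_RN\neq 0$, a contradiction.

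The hard part is the degree-dropping flat resolution in the first paragraph, together with being precise about where absolute flatness enters: $\pi_*M$ is typically far from flat over $\pi_*R$ --- indeed $A=\pi_0R$ itself need not be $\pi_*R$-flat --- so one cannot lower internal degrees with a free resolution; what rescues the argument is that absolute flatness of $A$ makes \emph{every} $A$-module that appears (the $\pi_kM$, the $\pi_kR$, and all successive kernels) flat over $A$, which is exactly what kills the degree-$0$ part of each new kernel. The only other thing to watch is the spectral sequence input, and it is light: \emph{(i)} uses only exhaustivity of the filtration, and \emph{(ii)} only the absence of differentials on $E^2_{0,0}$, so no delicate convergence statement is required.
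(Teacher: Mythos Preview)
Your argument is correct and takes a genuinely different route from the paper's.

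The paper proves (i) by working entirely inside $\D(R)$: it builds a sequence $M_0\to M_1\to\cdots$ of ccf $R$-modules with colimit $M$, starting from $M_0=M\otimes_{\pi_0R}R$ (ccf because $\pi_0R$ is absolutely flat) and at each step attaching a cofiber of the form $(\text{fiber})\otimes_{\pi_0R}R$; part (ii) then follows by tracking $\pi_0$ along this same sequence, showing the maps $\pi_0(M_i\otimes_RN)\to\pi_0(M_{i+1}\otimes_RN)$ are injective with $\pi_0(M_0\otimes_RN)\cong\pi_0M\otimes_{\pi_0R}\pi_0N$. You instead pass to $\pi_*$ at the outset and run the Tor spectral sequence, the key input being the degree-dropping flat $\pi_*R$-resolution of $\pi_*M$. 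The two arguments are really the same mechanism in different models: your $F_0=\pi_*R\otimes_A\pi_*M$ is exactly $\pi_*(M_0)$, and the inductive step in both cases uses absolute flatness of $A$ to guarantee that the next ``generator module'' is $A$-flat and sits one degree lower. What your version buys is compression---a single graded-algebra estimate handles both parts---at the cost of invoking the spectral sequence; what the paper's version buys is that it never leaves $\D(R)$ and needs no convergence discussion, and the filtration $M\simeq\colim M_i$ it produces is reused verbatim for (ii). Your remark that ``faithful'' must be read as ``$\pi_0M\otimes_A(-)$ is conservative'' (equivalently, faithfully flat, since flatness is automatic) is correct and is precisely what the paper uses in its own proof.
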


\begin{proof}
    Let $\Cscr\subseteq\D(R)_{\leq 0}$ be the full subcategory of
    coconnectively flat $R$-modules. This full subcategory is closed under
    extensions and filtered colimits and also contains all modules of the form
    $M'\otimes_{\pi_0R}R$ for $M'$ a coconnective $\pi_0R$-module, since
    $M'\otimes_{\pi_0R}R\otimes_RN\we M'\otimes_{\pi_0R}N$ and $\pi_0R$ is
    absolutely flat. We claim that there is a functor
    $\bN\rightarrow\Cscr$ whose colimit, computed in $\D(R)$, is $M$; this will prove part (i).
    To see this, let $M_0= M\otimes_{\pi_0R}R\rightarrow M$ be the counit map of the adjunction
    $\D(\pi_0R)\rightleftarrows\D(R)$.
    This map is surjective on homotopy groups, $\pi_0M_0\rightarrow\pi_0M$ is an isomorphism, and $M_0\in\Cscr$.
    Suppose that a diagram
    $M_0\rightarrow\cdots M_i\rightarrow M$ 
    has been constructed for some $i\geq 0$ with the property that
    $M_j\rightarrow M$ is surjective on homotopy for $0\leq j\leq i$ (it
    suffices for this to hold for $j=0$), each $\pi_0M_j\rightarrow\pi_0M$ is an isomorphism, and the kernel of
    $\pi_*M_j\rightarrow\pi_*M$ dies in $\pi_*M_{j+1}$ for $0\leq j<i$.
    Let $F_i=\fib(M_i\rightarrow M)$. We have that $F_i$ is
    $(-1)$-coconnective. Now, consider the composition of the counit map
    $F_i\otimes_{\pi_0R}R\rightarrow F_i$ with $F_i\rightarrow M_i$. Since the
    composition of this map with $M_i\rightarrow M$ is nullhomotopic, there is
    an induced map $\cofib(F_i\otimes_{\pi_0R}R\rightarrow M_i)\rightarrow M$.
    Let $M_i\rightarrow M_{i+1}$ be the cofiber together with its induced
    $2$-cell witnessing that the composition $M_i\rightarrow M_{i+1}\rightarrow
    M$ is equivalent to the original map $M_i\rightarrow M$. Note that
    $\cofib(M_i\rightarrow M_{i+1})\we(F_i\otimes_{\pi_0R}R)[1]$, which is in
    $\Cscr$ (since $F_i$ was $(-1)$-coconnective). It follows that $M_{i+1}$ is
    in $\Cscr$. Additionally, $\pi_{-1}(F_i\otimes_{\pi_0R}R)\iso\pi_{-1}F_i\rightarrow\pi_{-1}M_i$
    is injective, so that $\pi_0M_i\iso\pi_0M_{i+1}$. By induction, we can extend this to a functor as claimed and
    the colimit is equivalent to $M$.\footnote{To construct a
    functor such as $\bN\rightarrow\Cscr_{/M}$ requires in theory an infinite amount
    of coherences. However, any sequence of composable maps in an
    $\infty$-category $\Dscr$ extends to a
    functor $\bN\rightarrow\Dscr$. This is because the inclusion
    $\mathrm{Spine}\,\bN\hookrightarrow\bN$ of the
    simplicial subset with a single $1$-simplex for each $i<i+1$
    and no nondegenerate higher simplices is inner anodyne. Indeed, it is a
    filtered colimit of inner anodyne inclusions
    $\mathrm{Spine}[n]\hookrightarrow[n]$;
    see~\cite[\href{https://kerodon.net/tag/00JA}{Tag 00JA}]{kerodon}.}

    To prove part (ii), note that a map $N\rightarrow N'$ is an equivalence in
    $\D(R)_{\leq 0}$ if and only if its fiber vanishes. As $M\otimes_R(-)$ is
    exact, we see that it is enough to check that if $N\in\D(R)_{\leq 0}$ is
    nonzero, then $M\otimes_RN$ is nonzero. Assume that $\pi_0N\neq 0$. Using the sequence $\{M_i\}$
    above, we obtain a sequence $M_0\otimes_R N\rightarrow M_1\otimes_R
    N\rightarrow\cdots$ whose colimit is $M\otimes_RN$. As $\cofib(M_i\otimes_R N\rightarrow
    M_{i+1}\otimes_RN)$ is in $\D(R)_{\leq 0}$, we see that $\pi_0(M_i\otimes_R
    N)\rightarrow\pi_0(M_{i+1}\otimes_RN)$ is injective. Now,
    $\pi_0(M_0\otimes_RN)\we\pi_0(M\otimes_{\pi_0R}R\otimes_RN)\iso\pi_0(M\otimes_{\pi_0R}N)\iso\pi_0M\otimes_{\pi_0R}\pi_0N$.
    This is nonzero by faithfulness of $\pi_0M$.
\end{proof}

\begin{corollary}\label{cor:absolutely flat}
    Let $R\rightarrow S$ be a morphism of coconnective $\bE_\infty$-rings. If
    $\pi_0R$ is absolutely flat, then $S$ is ccf over $R$. If additionally
    $\pi_0S$ is faithful as a $\pi_0R$-module, then $S$ is ccff over $R$.
\end{corollary}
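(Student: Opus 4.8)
The plan is to deduce this immediately from Proposition~\ref{prop:ccff} by taking $M=S$. First I would note that since $S$ is a coconnective $\bE_\infty$-ring we have $\pi_iS=0$ for $i>0$; by the very definition of the canonical $t$-structure this says exactly that $S$, viewed as an $R$-module along the map $R\rightarrow S$, lies in $\D(R)_{\leq 0}$, i.e.\ is a coconnective $R$-module. Since $\pi_0R$ is absolutely flat, part~(i) of Proposition~\ref{prop:ccff} applied to $M=S$ then gives that $S$ has Tor-amplitude bounded above by $0$, which is precisely the statement that $S$ is ccf over $R$.

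For the second assertion, the extra hypothesis is exactly that $\pi_0S$, regarded as a $\pi_0R$-module via the induced ring map $\pi_0R\rightarrow\pi_0S$, is faithful. Thus part~(ii) of Proposition~\ref{prop:ccff} applies with $M=S$ and yields that the functor $M\otimes_R(-)\colon\D(R)_{\leq 0}\rightarrow\D(R)_{\leq 0}$ is conservative, i.e.\ that $S$ is ccff over $R$. There is essentially no obstacle here: all of the real content lies in Proposition~\ref{prop:ccff}, and the only point to observe is the tautological translation between coconnectivity of $S$ as an $\bE_\infty$-ring and coconnectivity of $S$ as an $R$-module, which is built into the definition of the canonical $t$-structure on $\D(R)$.
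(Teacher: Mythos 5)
Your proof is correct and is exactly how the paper intends the corollary to be read: it is the special case $M=S$ of Proposition~\ref{prop:ccff}, with the only observation needed being that coconnectivity of $S$ as an $\bE_\infty$-ring means $S\in\D(R)_{\leq 0}$ for the canonical $t$-structure (indeed, the paper states the corollary without any separate proof). Nothing further is required.
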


\section{Boolean rings}\label{sec:boolean}

Throughout this section, fix a prime number $p$.
Say that a commutative $\bF_p$-algebra $A$ is $p$-Boolean if $x^p=x$ for all $x\in A$.
Below, $\varphi$ will always denote the $p$-power Frobenius $\varphi(x)=x^p$.
The category $\CAlg_{\bF_p}^{\varphi=1}$ of $p$-Boolean $\bF_p$-algebras
is equivalent to the opposite of the category of profinite spaces by Stone duality. In particular,
$\CAlg_{\bF_p}^{\varphi=1}$ is independent of $p$. See~\cite{stringall,antieau-spherical} for details.

Recall from the introduction that an anima $X$ is $p$-finite if $\pi_0X$ is finite, $X$ is truncated, and each homotopy group
$\pi_i(X,x)$ for $i\geq 1$ is a finite $p$-group. Let $\Sscr_{p\fin}\subseteq\Sscr$ be the full
subcategory of $p$-finite anima and let $\Pro(\Sscr_{p\fin})$ be its pro-category. 

There is an $\infty$-category $\DAlg_{\bF_p}^{\varphi=1}$ of $p$-Boolean derived commutative
$\bF_p$-algebras introduced in~\cite{antieau-spherical}. These are derived commutative $\bF_p$-algebras
equipped with a coherent trivialization of $\varphi$. Specifically, there is an action of $\B\bN$
on $\DAlg_{\bF_p}$ classifying the $p$-power Frobenius endomorphism and
$\DAlg_{\bF_p}^{\varphi=1}=\left(\DAlg_{\bF_q}\right)^{\h\B\bN}$, the $\infty$-category of
$\B\bN$-fixed points.

The main facts we need about $\DAlg_{\bF_p}^{\varphi=1}$ are (1) that the forgetful functor
$\DAlg_{\bF_p}^{\varphi=1}\rightarrow\DAlg_{\bF_p}$ preserves limits and colimits, (2) that the
continuous cochain functor $\Oscr\colon\Pro(\Sscr_{p\fin})^\op\rightarrow\DAlg_{\bF_p}^{\varphi=1}$ given by
$X\mapsto\Oscr(X)=\bF_p^X=\R\Gamma_\cont(X,\bF_p)$
is an equivalence, and (3) that the free $p$-Boolean derived commutative $\bF_p$-algebra on a
degree $-n$ generator, for $n\geq 1$, is equivalent to $\Oscr(K(\bF_p,n))=\bF_p^{K(\bF_p,n)}$, cochains on the Eilenberg--Mac Lane
anima $K(\bF_p,n)$.

\begin{theorem}\label{thm:boolean}
    Let $k$ be a $p$-Boolean $\bF_p$-algebra and let $R$ be a $p$-Boolean
    derived $k$-algebra such that the unit map $k\rightarrow\pi_0R$ is an isomorphism.
    Then, there is a retraction of $k\rightarrow R$ if
    \begin{enumerate}
        \item[{\em (i)}] there is a natural number $n$ such that
            $\pi_{-i}R=0$ for $1\leq i\leq n$ and either
            $k$ has cardinality at most $\aleph_n$ or $k\rightarrow R$ is $\aleph_n$-finitely
            presented;
        \item[{\em (ii)}] $\Spec k$ is extremally disconnected.
    \end{enumerate}
    Moreover, every such retraction $R\rightarrow k$ is ccff.
\end{theorem}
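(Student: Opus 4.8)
The plan is obstruction-theoretic. Every object of $\DAlg_{\bF_p}^{\varphi=1}$ is of the form $\Oscr(X)=\R\Gamma_\cont(X,\bF_p)$ for some $X\in\Pro(\Sscr_{p\fin})$, so $R$ is coconnective with $\pi_0R=k$; dually, writing $X=\Spec R$ and $S=\Spec k$ (a profinite set, by Stone duality), a retraction $R\to k$ in $\DAlg_{\bF_p}^{\varphi=1}$ is the same datum as a section $S\to X$ of the projection $X\to S$ in $\Pro(\Sscr_{p\fin})$, and I would work on whichever side is convenient. I would first exhibit $R$ as a colimit $R\simeq\colim_m R\langle m\rangle$ in $\DAlg_{\bF_p}^{\varphi=1}$, dual to the relative Postnikov tower $X\simeq\lim_m X\langle m\rangle$ of $X$ over $S$: here $R\langle 0\rangle=k$, $X\langle m\rangle=\tau_{\leq m}X$, and each $X\langle m\rangle\to X\langle m-1\rangle$ is a principal fibration classified by a $k$-invariant valued in a relative Eilenberg--Mac Lane anima $K(\pi_m,m+1)$ over $S$, with $\pi_m$ a sheaf of (pro-)finite $p$-groups on $S$ (abelian for $m\geq 2$). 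Dually $R\langle m-1\rangle\to R\langle m\rangle$ is a pushout trivializing the corresponding map of $p$-Boolean derived algebras; since $S$ is a profinite set, the continuous-cochain formalism of~\cite{antieau-spherical} identifies the free $p$-Boolean derived algebras in sight with cochains on Eilenberg--Mac Lane anima $K(\bF_p,j)$, which is what makes these layers tractable.

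Given the tower, one has $\Map_{(\DAlg_{\bF_p}^{\varphi=1})_{k/}}(R,k)\simeq\lim_m\Map_{(\DAlg_{\bF_p}^{\varphi=1})_{k/}}(R\langle m\rangle,k)$, and the task is to construct a compatible system of augmentations $\epsilon_m\colon R\langle m\rangle\to k$ with $\epsilon_0=\id_k$ and then check that the resulting point of the limit exists. The obstruction to extending $\epsilon_{m-1}$ along $R\langle m-1\rangle\to R\langle m\rangle$ is the pullback along $\epsilon_{m-1}$ of the $k$-invariant, a class in a sheaf-cohomology group $H^{m+1}_\cont(S;\epsilon_{m-1}^*\pi_m)$---equivalently, via the relative cotangent complex $L_{R\langle m-1\rangle/k}$, a class in a sum of groups $\Ext^i_k(-,\pi_{-m}R)$; the indeterminacy of $\epsilon_{m-1}$ and the $\lim^1$ governing passage to $R=\colim_m R\langle m\rangle$ live in the analogous groups one degree lower. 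So everything reduces to vanishing of these $\Ext$- (equivalently sheaf-cohomology) groups, together with a Mittag--Leffler statement for the associated towers of augmentation sets.

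This vanishing is where the hypotheses enter, and it is the main obstacle. In case (ii), $\Spec k$ extremally disconnected is exactly the statement that $S$ is a projective object among profinite sets (Gleason): continuous maps out of $S$ lift along surjections of finite sets, which makes all the transition maps among the towers of sections surjective and forces the relevant $\lim^1$-terms to vanish, so the obstructions can all be killed and the section exists. This is precisely the content isolated below as the Generalized Gleason Theorem~\ref{thm:enough_projectives}, which applies here since $k\to\pi_0R$ is an isomorphism, \emph{a fortiori} injective. In case (i), one uses instead that an absolutely flat commutative ring of cardinality at most $\aleph_n$ has global dimension bounded in terms of $n$ (a theorem of Osofsky), so that $\Ext^i_k$ vanishes identically once $i$ exceeds that bound; the hypothesis $\pi_{-1}R=\cdots=\pi_{-n}R=0$ makes $R\langle m\rangle=k$ for $m\leq n$ and forces the successive cotangent complexes to be coconnective enough that the obstruction classes and the $\lim^1$-terms land in $\Ext^i_k$-groups with $i$ beyond that bound, hence all vanish. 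The case where $k\to R$ is only $\aleph_n$-finitely presented is reduced to the cardinality case by descending the entire situation---the algebra $R$, its tower, and the obstruction classes---to a $p$-Boolean subalgebra of $k$ of cardinality at most $\aleph_n$. The delicate part throughout is the bookkeeping: matching cardinality to global dimension on one side and vanishing homotopy to cotangent-complex connectivity on the other, while keeping enough control of the free $p$-Boolean algebras $\LSym^{\varphi=1}$---via cochains on Eilenberg--Mac Lane anima---to run the induction.

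Finally, ``every such retraction is ccff'' is immediate from Corollary~\ref{cor:absolutely flat}: $R$ is a coconnective $\bE_\infty$-ring with $\pi_0R=k$ absolutely flat (being $p$-Boolean), and $k$, regarded as an $R$-module through any retraction $R\to k$, is coconnective with $\pi_0=k$ faithful over $\pi_0R=k$; hence $k$ is ccff over $R$.
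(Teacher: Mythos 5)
There are two genuine problems with your argument, both in how the obstruction groups are identified and handled. First, your treatment of case (ii) is circular within this paper: the Generalized Gleason Theorem~\ref{thm:enough_projectives} is \emph{deduced from} Theorem~\ref{thm:boolean} (via Corollary~\ref{cor:maps_to_completes}, whose proof applies Theorem~\ref{thm:boolean}), so you cannot invoke it here. The paper's actual mechanism for case (ii) is algebraic, not topological: extremal disconnectedness of $\Spec k$ is equivalent to $k$ being self-injective (Lemma~\ref{lem:injective}), and self-injectivity kills the obstruction group $\Ext^{m+1}_k(\pi_{-m}R,k)$ in every degree. Your alternative sketch --- ``$S$ is projective among profinite sets, hence transition maps in the towers of sections are surjective and $\lim^1$ vanishes'' --- does not engage with these groups at all; the remark following Theorem~\ref{thm:boolean} shows how classical Gleason projectivity \emph{can} be used, but only by first reducing pointwise to case (i) over $\bF_p$, which your sketch does not do.

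Second, and more seriously, your claimed identification of the obstruction to extending $\epsilon_{m-1}$ with a class in a continuous/sheaf cohomology group $H^{m+1}_\cont(S;\epsilon_{m-1}^*\pi_m)$ cannot be correct as stated. A profinite set has covering dimension zero, so such topological cohomology would vanish in all positive degrees for any reasonable coefficients, and your argument would then prove the retraction exists for \emph{every} $p$-Boolean $k$ with no cardinality or disconnectedness hypothesis; this is false, since by Aoki's example~\cite{aoki-descendable} there are non-descendable faithfully flat maps of $p$-Boolean rings, and an unconditional Theorem~\ref{thm:boolean} would contradict that via the argument of Corollary~\ref{cor:boolean_descendable}. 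The genuine obstruction is module-theoretic: lifting $\id_{\pi_{-m}R[-m]}$ through $\tau_{\geq -m}R\rightarrow\pi_{-m}R[-m]$ meets a class in $\Ext^{m+1}_k(\pi_{-m}R,k)$ (note the variance --- $k$ is the target, not $\pi_{-m}R$), and it is exactly this group whose vanishing the hypotheses purchase: Osofsky's bound $\mathrm{gl.dim}\,k\leq n+1$ together with $m\geq n+1$ in case (i), self-injectivity in case (ii). Relatedly, your induction leaves unaddressed the two points the paper actually has to work for: control of the next stage (the pushout along the free $p$-Boolean algebra $F_m(\pi_{-m}R)$ has $\pi_0\iso k$ and $(-m-1)$-coconnective fiber only because $F_m(M)\rightarrow k$ is ccff, via Proposition~\ref{prop:ccff} and absolute flatness of $k$), and the passage to the limit, which the paper handles by a filtered \emph{colimit} of algebras killing homotopy (so no $\lim^1$ or Mittag--Leffler issue arises), rather than by an inverse limit of augmentation spaces as in your framing. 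Your final paragraph on the ccff property of any retraction is correct and agrees with the paper.
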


\begin{proof}
    The final claim follows from Proposition~\ref{prop:ccff}, so it is enough to produce the
    retractions in the given cases. In these cases, we claim
    that we can inductively kill negative degree classes of $R$ without changing $\pi_0$.

    By Proposition~\ref{prop:ccff}, $\bF_p^{K(\bF_p,m)}\rightarrow\bF_p$ is ccff for
    $m\geq 1$. It follows that $k^{K(\bF_p,m)}\rightarrow k$ is ccff as
    well, that $k^{K(\bF_p,m)}$ is the free $p$-Boolean derived
    commutative $k$-algebra on a degree $-m$ generator, and that
    $\pi_{-m}(k^{K(\bF_p,m)})\iso k$. We write this $p$-Boolean derived
    commutative $k$-algebra as $F_{m}(k)$. As $k$ is absolutely
    flat, every $k$-module $M$ is a filtered colimit of free $k$-modules; since
    ccff maps are closed under filtered colimits, $F_{m}(M)\rightarrow k$ is
    ccff for every discrete $k$-module $M$ and for all $m\geq 1$, where $F_{m}(M)$ denotes the free
    $p$-Boolean derived $k$-algebra on $M[-m]$. Moreover,
    $\pi_{-m}F_{m}(M)\iso M$ and $\pi_{-i}F_m(M)=0$ for $1\leq i < m$. 

    Every $\aleph_n$-compact morphism
    is a base change of a morphism with base having cardinality at most
    $\aleph_n$, so it suffices to prove (i) in the case that $k$ has cardinality
    $\aleph_n$. This condition guarantees that $k$ has global dimension at most $n+1$ by~\cite{osofsky}.
    Let $m$ be the least positive integer such that
    $\pi_{-m}R\neq 0$. By assumption, $m\geq n+1$ in case (i).
    Let $M=\pi_{-m}R$.
    We claim that under the hypotheses of the theorem, there is a lift of the
    identity on $\pi_{-m}R[-m]$ through a $k$-module map $\tau_{\geq
    -m}R\rightarrow\pi_{-m}R[-m]$. Indeed, the obstruction is a class of
    $\Ext^{m+1}(M,k)$ since the intermediate homotopy groups of $R$ vanish.
    If $k$ has global dimension at most $n+1$, then this group vanishes. If
    $\Spec k$ is extremally disconnected, then in fact $k$ is self-injective (see
    Lemma~\ref{lem:injective} below), so this group vanishes in that case as well.

    Composing some lift $\pi_{-m}R[-m]\rightarrow\tau_{\geq -m}R$ with
    $\tau_{\geq -m}R\rightarrow R$, we can form the pushout
    $$\xymatrix{
        F_m(M)\ar[d]\ar[r]& k\ar[d]\\
        R\ar[r]&R'
    }$$
    of $p$-Boolean derived commutative $k$-algebras,
    where top horizontal map sends $M$ to zero and the left vertical map is
    the canonical map sending induced by $\pi_{-m}M[-m]\rightarrow R$. All four maps in the diagram are ccff.
    By construction, the fiber of the left vertical map $F_m(M)\rightarrow R$
    is $(-m-1)$-coconnective as it induces isomorphisms on $\pi_0$ and
    $\pi_{-m}$. By base change along the ccff map $F_m(M)\rightarrow k$ we see
    that the fiber of $k\rightarrow R'$ is $(-m-1)$-coconnective. In
    particular, $k\iso\pi_0R'$ and $\pi_{-i}R'=0$ for $1\leq i\leq m-1$. On the other hand, $R\rightarrow R'$
    annihilates $\pi_{-m}$. Making an appropriate filtered colimit, we can kill
    off all negative homotopy groups of $R$ without changing $\pi_0$ by first
    annihilating $\pi_{-m}$ countably many times, and then $\pi_{-m-1}$, and so
    on.
\end{proof}

\begin{remark}
    Our original proof of Theorem~\ref{thm:boolean} deduced case (iii) from case (i).
    Indeed, suppose that $\Spec k$ is disconnected, let $X=\Spec k$, and for each $x\in X$, let
    $R_x=R\otimes_k\bF_p$. Since $\bF_p$ is flat over $k$, $\bF_p\iso\pi_0(R_x)$. Thus, we can find
    a map $f_x\colon R_x\rightarrow\bF_p$ by part (i). It follows that we can construct a
    commutative diagram
    $$\xymatrix{
        k\ar[r]\ar[d]&R\ar[d]&\\
        \prod_X\bF_p\ar[r]&\prod_X R_x\ar[r]^{\prod f_x}&\prod_X\bF_p,
    }$$
    where the bottom composition is the identity. As there is a retraction of
    $k\rightarrow\prod_X\bF_p$ by extremal disconnectedness, we can compose with this retraction to
    obtain $R\rightarrow\prod_X R_x\rightarrow\prod_X\bF_p\rightarrow k$, which is a retraction of
    $k\rightarrow R$.
\end{remark}

\begin{corollary}\label{cor:maps_to_completes}
    Suppose that $R$ is a $p$-Boolean derived ring. Then, there is a ccff map $R\rightarrow k$
    where $k$ is a discrete $p$-Boolean ring and $\Spec k$ is extremally disconnected.
\end{corollary}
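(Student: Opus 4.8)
The plan is to reduce to Theorem~\ref{thm:boolean}(ii) after a single faithfully flat base change that makes the bottom ring's spectrum extremally disconnected. Write $A=\pi_0R$, a discrete $p$-Boolean ring, and recall that it is absolutely flat. First I would choose, via classical Stone duality, a Gleason cover $\Spec k\rightarrow\Spec A$: that is, a discrete $p$-Boolean ring $k$ with $\Spec k$ extremally disconnected together with a surjection $\Spec k\twoheadrightarrow\Spec A$ of profinite sets. (Such covers exist by the classical Gleason theorem, the Stone space of the complete Boolean algebra of regular open sets of $\Spec A$ doing the job; alternatively one may take $k=\prod_{\Spec A}\bF_p$.) Surjectivity makes $A\rightarrow k$ injective, hence --- $A$ being absolutely flat, so that every $A$-module is flat --- faithfully flat; in particular $k$ is a faithful $A$-module.

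Next I would base change. Recall that every $p$-Boolean derived ring is coconnective, so that there is a canonical map $A=\pi_0R\rightarrow R$ in $\DAlg_{\bF_p}^{\varphi=1}$, namely the map out of the connective cover (which is $\pi_0R$ because $R$ is coconnective); Stone-dually this collapses a pro-$p$-finite anima onto its profinite set of components. Form the pushout $R'=R\otimes_Ak$ in $\DAlg_{\bF_p}^{\varphi=1}$. Since the forgetful functor to $\DAlg_{\bF_p}$ preserves colimits and the underlying module of a pushout of derived commutative rings is the relative tensor product of the underlying modules, the flatness of $k$ over $A$ gives $\pi_*R'\iso\pi_*R\otimes_Ak$. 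Hence $R'$ is again a coconnective $p$-Boolean derived ring, the structure map $k\rightarrow R'$ identifies $k$ with $\pi_0R'$, and, by Corollary~\ref{cor:absolutely flat} applied to the map $R\rightarrow R'$ of coconnective $\bE_\infty$-rings --- using that $\pi_0R=A$ is absolutely flat and $\pi_0R'=k$ is faithful over $A$ --- the map $R\rightarrow R'$ is ccff.

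Finally I would invoke Theorem~\ref{thm:boolean}(ii): as $\Spec k$ is extremally disconnected and $k\we\pi_0R'$, there is a retraction $R'\rightarrow k$ of $k\rightarrow R'$, and it is ccff. Since ccff maps compose --- if $T$ is ccff over $S$ and $S$ is ccff over $R$, then $T\otimes_R(-)\we T\otimes_S(S\otimes_R(-))$ is left $t$-exact and conservative on coconnective modules --- the composite $R\rightarrow R'\rightarrow k$ is a ccff map to a discrete $p$-Boolean ring with extremally disconnected spectrum, as desired. The one step that needs genuine care is the computation $\pi_*R'\iso\pi_*R\otimes_Ak$ of the base change, which relies on the flatness of $k$ over the absolutely flat ring $A$ together with the compatibility of pushouts of derived commutative rings with the forgetful functors; granting this, the statement is a formal consequence of Theorem~\ref{thm:boolean} and Corollary~\ref{cor:absolutely flat}.
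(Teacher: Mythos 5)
Your proposal is correct and follows essentially the same route as the paper: base change $R$ along a map $\pi_0R\to k$ with $\Spec k$ extremally disconnected (your Gleason cover, i.e.\ the completion of the $p$-Boolean ring $\pi_0R$ used in the paper, complete being equivalent to extremally disconnected by Lemma~\ref{lem:injective}), identify $\pi_0R'\iso k$ by flatness, apply Theorem~\ref{thm:boolean}(ii) to get a ccff retraction, and compose ccff maps. You merely spell out details the paper leaves implicit (injectivity/faithfulness, Corollary~\ref{cor:absolutely flat}, and composability of ccff maps), which is fine.
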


\begin{proof}
    To see this, we can consider a map $\pi_0R\rightarrow k$
    where $k$ is a completion of the $p$-Boolean ring $\pi_0R$. The extension of scalars
    $R'=S\otimes_{\pi_0R}R$ is then ccff over $R$. Now, $k\iso\pi_0R'$ so we can apply
    Theorem~\ref{thm:boolean} to obtain a retraction $R'\rightarrow k$, which is necessarily ccff.
    The composition $R\rightarrow R'\rightarrow k$ is the desired ccff cover.
\end{proof}

\begin{theorem}[Generalized Gleason theorem]\label{thm:enough_projectives}
    Suppose that $k$ is a $p$-Boolean ring and that $\Spec k$ is extremally disconnected. If
    $k\rightarrow R$ is a map of $p$-Boolean derived rings where $k\rightarrow\pi_0R$ is injective,
    then there is a retraction $R\rightarrow k$.
\end{theorem}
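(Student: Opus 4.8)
The plan is to reduce to Theorem~\ref{thm:boolean}(ii), the case in which $k\to\pi_0R$ is an isomorphism, by first splitting the map on $\pi_0$ via the classical Gleason theorem and then performing a flat base change along that splitting.

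First I would record the behaviour on $\pi_0$. The structure map $k\to R$ factors through the canonical map $\pi_0R\to R$ via the unit $k\to\pi_0R$, and by hypothesis the latter is an injection of discrete $p$-Boolean rings. Under Stone duality this injection corresponds to a surjection $\Spec(\pi_0R)\twoheadrightarrow\Spec k$ of Stone spaces; since $\Spec k$ is extremally disconnected it is a projective object in compact Hausdorff spaces by Gleason's theorem, so this surjection admits a section. Dually, the unit $k\to\pi_0R$ admits a retraction $\rho\colon\pi_0R\to k$ in $\CAlg_{\bF_p}^{\varphi=1}$.

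Next I would form the pushout
$$\xymatrix{
\pi_0R\ar[r]\ar[d]_\rho&R\ar[d]^\beta\\
k\ar[r]^\alpha&R'
}$$
in $\DAlg_{\bF_p}^{\varphi=1}$, where the top arrow is the canonical map $\pi_0R\to R$. Since $\pi_0R$ is $p$-Boolean, hence absolutely flat, $k$ is a flat $\pi_0R$-module, so the underlying $\bF_p$-module of $R'$ is $k\otimes_{\pi_0R}R$ and $\pi_iR'\cong k\otimes_{\pi_0R}\pi_iR$ for all $i$; in particular the unit $\alpha\colon k\to R'$ induces an isomorphism $k\cong\pi_0R'$. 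Theorem~\ref{thm:boolean}(ii) now applies to $k\to R'$ and produces a retraction $\sigma\colon R'\to k$ with $\sigma\circ\alpha\simeq\id_k$. Because the square commutes and $\rho$ retracts the unit $k\to\pi_0R$, the composite of $\beta$ with the structure map $k\to R$ equals $\alpha$; hence $\sigma\circ\beta\colon R\to k$ satisfies $(\sigma\circ\beta)\circ(k\to R)\simeq\sigma\circ\alpha\simeq\id_k$ and is the desired retraction.

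There is no serious obstacle here: the genuine content is the classical Gleason theorem together with Theorem~\ref{thm:boolean}(ii), and everything else is formal. The only points I would treat with care are that colimits in $\DAlg_{\bF_p}^{\varphi=1}$ are computed on underlying $\bF_p$-modules, so that flat base change along $\rho$ genuinely controls $\pi_0R'$, and the identification of $\beta$ composed with the unit of $R$ with $\alpha$, which is exactly what makes $\sigma\circ\beta$ a retraction of $k\to R$.
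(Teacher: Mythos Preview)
Your argument is correct, and it uses the same two ingredients as the paper—Theorem~\ref{thm:boolean}(ii) and the classical Gleason theorem—but in a different order. The paper first invokes Corollary~\ref{cor:maps_to_completes} to produce a ccff map $R\to k'$ with $k'$ discrete $p$-Boolean (this corollary is proved by base-changing along a completion $\pi_0R\to k'$ and applying Theorem~\ref{thm:boolean}); the composite $k\to k'$ is then injective, and classical Gleason yields a retraction $k'\to k$, whence $R\to k'\to k$. You instead apply Gleason immediately to the injection $k\to\pi_0R$, obtain a retraction $\rho\colon\pi_0R\to k$, base-change $R$ along $\rho$, and finish with Theorem~\ref{thm:boolean}(ii). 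Your route is marginally more direct, since it avoids passing through an auxiliary extremally disconnected $k'$ and does not require Corollary~\ref{cor:maps_to_completes} as a separate statement; the paper's route has the mild advantage that Corollary~\ref{cor:maps_to_completes} is of independent use (for instance in Corollary~\ref{cor:condensed}). One small wording issue: it is not that arbitrary colimits in $\DAlg_{\bF_p}^{\varphi=1}$ are computed on underlying $\bF_p$-modules, but rather that pushouts are relative tensor products (using that the forgetful functor to $\DAlg_{\bF_p}$ preserves colimits), which is exactly what you use.
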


\begin{proof}
    Using Corollary~\ref{cor:maps_to_completes}, there is a ccff map $R\rightarrow k'$ where $k'$ is
    discrete $p$-Boolean. The composition $k\rightarrow k'$ is ccff and hence injective, so we can
    conclude using the classical Gleason theorem~\cite{gleason} and Stone duality.
\end{proof}

 	For our next application, we need the following general notion.
 	
 	\begin{definition}
 	Let $\mathcal{C}$ be an $\infty$-category with finite limits. We say that $\mathcal{C}$ is regular if it admits geometric realizations of \v{C}ech nerves, and these are stable under base change. We say that a regular $\infty$-category $\mathcal{C}$ is coherent if for every object $X$ the poset of subobjects of $X$ has finite joins, and these are preserved by base change.
 	\end{definition}
 	
 Regular $\infty$-categories admit a well-behaved notion of effective epimorphism: these are the
 maps $f\colon X \rightarrow Y$ whose \v{C}ech nerve has geometric realization equivalent to $Y$. Similarly, in a coherent $\infty$-category there is a well-behaved notion of what it means for a finite family of maps $f_i\colon X_i \rightarrow Y$ to be jointly effectively epimorphic: we require that the join of the geometric realizations of the \v{C}ech nerves of $f_i$ is equal to $Y$.
  	
 	\begin{proposition}\label{prop:coherent}
 	The $\infty$-category $\Pro(\Sscr_{p\fin})$ is coherent. Furthermore, a morphism $f\colon X \rightarrow Y$ is an effective epimorphism if and only if the induced morphism of $p$-boolean derived commutative $\bF_p$-algebras $\mathcal{O}(Y) \rightarrow \mathcal{O}(X)$ is ccff. 
 	\end{proposition}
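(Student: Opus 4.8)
The plan is to establish the two assertions in turn, leveraging the equivalence $\Oscr\colon\Pro(\Sscr_{p\fin})^\op\xrightarrow{\we}\DAlg_{\bF_p}^{\varphi=1}$ to translate statements about $\Pro(\Sscr_{p\fin})$ into statements about $p$-Boolean derived commutative $\bF_p$-algebras. For regularity, I would first observe that $\Sscr_{p\fin}$ itself has finite limits (it is closed under finite limits in $\Sscr$) and that $\Pro$ of a finitely complete $\infty$-category has finite limits; moreover geometric realizations of \v{C}ech nerves exist in $\Pro(\Sscr_{p\fin})$ because a \v{C}ech nerve of $f\colon X\rightarrow Y$ is a groupoid object, and one can reduce to the case of a level map of pro-systems of $p$-finite anima, where the realization is computed levelwise (the class of $p$-finite anima is closed under the relevant finite colimits, e.g. quotients by finite $p$-group actions stay $p$-finite since truncatedness and finiteness of homotopy are preserved). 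Stability under base change of these realizations again reduces to the levelwise situation in $\Sscr_{p\fin}$, where it holds because $\Sscr$ is an $\infty$-topos.

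For coherence, I need that for each $X\in\Pro(\Sscr_{p\fin})$ the poset of subobjects has finite joins preserved by base change. Here ``subobject'' means a monomorphism into $X$; in $\Pro(\Sscr_{p\fin})$ a monomorphism is, up to equivalence, a cofiltered limit of monomorphisms of $p$-finite anima, i.e. of inclusions of summands $X_0'\hookrightarrow X_0$ together with the data of which path components are kept. Dually, via $\Oscr$, a subobject of $X$ corresponds to a quotient $\Oscr(X)\twoheadrightarrow A$ in $\DAlg_{\bF_p}^{\varphi=1}$ of a controlled kind — essentially a quotient by an idempotent-generated ideal of $\pi_0$. The finite join of two subobjects is the ``union'', corresponding on the ring side to the fiber product $A_1\times_{A_{12}} A_2$ where $A_{12}$ is the intersection; I would check this is again of the allowed form and that it is stable under base change, i.e. under $(-)\otimes_{\Oscr(X)}\Oscr(X')$. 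The key point is that everything here is controlled by $\pi_0$, which lives in the Stone-dual world $\CAlg_{\bF_p}^{\varphi=1}\simeq\Pro(\Fin)^\op$, where finite unions of clopen subsets behave well, and base change is flat on $\pi_0$.

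The substantive claim is the characterization of effective epimorphisms: $f\colon X\rightarrow Y$ has \v{C}ech nerve with geometric realization $Y$ if and only if $\Oscr(Y)\rightarrow\Oscr(X)$ is ccff. For the ``only if'' direction, if $|{\rm \check{C}ech}(f)|\we Y$ then applying the limit-preserving functor $\Oscr$ turns this colimit diagram into a limit diagram exhibiting $\Oscr(Y)$ as the totalization of the cobar construction on $\Oscr(X)$ over $\Oscr(Y)$; by the $\DAlg$-analogue of descent this forces $\Oscr(Y)\rightarrow\Oscr(X)$ to be at least conservative on the relevant modules, and one extracts the ccf (Tor-amplitude $\leq 0$) condition from the fact that $\Oscr$ of a $p$-finite anima is coconnective and the \v{C}ech nerve is levelwise computed. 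For the ``if'' direction, given a ccff map $\Oscr(Y)\rightarrow\Oscr(X)$, I would use Proposition~\ref{prop:ccff}/Corollary~\ref{cor:absolutely flat} — note $\pi_0\Oscr(Y)$ is $p$-Boolean, hence absolutely flat — together with the generalized Gleason-type machinery (Theorem~\ref{thm:boolean}, Corollary~\ref{cor:maps_to_completes}) to show that $\Oscr(Y)$ is recovered as the totalization of the \v{C}ech cobar construction; dualizing back through $\Oscr$ gives $|{\rm \check{C}ech}(f)|\we Y$. The main obstacle I anticipate is the careful verification on the ring side that ccff descent holds for $p$-Boolean derived commutative $\bF_p$-algebras — i.e. that a ccff map $A\rightarrow B$ satisfies $A\we\Tot(B^\bullet)$ in $\DAlg_{\bF_p}^{\varphi=1}$ — since the forgetful functor to $\D(A)$ is not obviously enough and one must control the totalization in the category of $p$-Boolean derived rings; this is where the hypothesis that $\Spec$ of the $\pi_0$'s are profinite (so that everything is, in a precise sense, a cofiltered limit of the finite situation where descent is classical) does the real work.
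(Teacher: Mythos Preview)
Your overall structure is right, but there is a real gap in the ``effective epi $\Rightarrow$ ccff'' direction, and the other parts take unnecessarily roundabout routes compared with the paper.

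\medskip

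\textbf{The gap.} You propose to deduce ccff from the fact that $\Oscr(Y)\simeq\Tot(\Oscr(X)^\bullet)$. The ccf half is fine (and automatic from Proposition~\ref{prop:ccff}, since $\pi_0\Oscr(Y)$ is $p$-Boolean, hence absolutely flat). But your claim that ``the $\DAlg$-analogue of descent forces $\Oscr(Y)\rightarrow\Oscr(X)$ to be at least conservative'' is backwards: conservativity is an \emph{input} to descent theorems, not an output. There is no general principle that recovers faithfulness from the mere fact that the totalization of the cobar complex agrees with the base. The paper avoids this entirely by an image-factorization trick: let $I$ be the image of $\pi_0\Oscr(Y)\rightarrow\pi_0\Oscr(X)$ and set $A=\Oscr(Y)\otimes_{\pi_0\Oscr(Y)}I$. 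Then $\Oscr(Y)\rightarrow A$ is a flat idempotent quotient on $\pi_0$, so $\Spec A\rightarrow Y$ is a monomorphism; and $A\rightarrow\Oscr(X)$ is injective on $\pi_0$, hence ccff by Corollary~\ref{cor:absolutely flat}. Since $f$ is an effective epimorphism factoring through the monomorphism $\Spec A\hookrightarrow Y$, one gets $\Oscr(Y)=A$, and therefore $\Oscr(Y)\rightarrow\Oscr(X)$ is ccff. This is the missing idea.

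\medskip

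\textbf{Simplifications elsewhere.} For regularity, your levelwise argument is delicate (geometric realizations of \v{C}ech nerves are not finite colimits, and the reduction to level maps in $\Pro$ needs care). The paper instead observes that $\Sscr_{p\fin}$ is regular (closed under finite limits and subobjects in $\Sscr$) and invokes a general result that $\Pro$ of a regular $\infty$-category is regular. For coherence, rather than analyzing subobjects via idempotents, the paper notes that $\Pro(\Sscr_{p\fin})$ has finite coproducts stable under base change (dually, $\DAlg_{\bF_p}^{\varphi=1}$ has products stable under cobase change), so the join of $U,V\subseteq X$ is simply the image of $U\amalg V\rightarrow X$; stability under base change is then immediate. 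Finally, for ``ccff $\Rightarrow$ effective epi'' you do not need any Gleason-type machinery: the fact that ccff maps satisfy descent for coconnective modules (Mathew--Mondal) gives $\Oscr(Y)\simeq\Tot(\Oscr(X)^\bullet)$ directly, which is exactly what you need.
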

 	\begin{proof}
 	Since $\Sscr_{p\fin}$ is closed under finite limits and passage to subobjects inside $\Sscr$, we have that  $\Sscr_{p\fin}$ is regular.
        Applying \cite[Prop.~5.2.2]{stefanich-exact} we deduce that $\Pro(\Sscr_{p\fin})$ is regular.

	Since $\DAlg_{\bF_p}^{\varphi=1}$ has finite products which are stable by cobase change, we
        have that $\Pro(\Sscr_{p\fin})$ admits finite coproducts, which are stable under base change\footnote{This can also can be deduced formally from the fact that $\Sscr_{p\fin}$ has coproducts which are stable under base change.}.  	 	
   Suppose now given an object $X$ in $\Pro(\Sscr_{p\fin})$ and a pair of subobjects $U, V$ in $X$. Then the join of $U$ and $V$ exists, and is given by the image of the map $U \coprod V \rightarrow Y$. The fact that this is stable under base change follows from the fact that coproducts and image factorizations are both preserved by base change.
   
   It remains to establish the characterization of effective epimorphisms. Let  $f \colon X \rightarrow Y$ be an effective epimorphism in $\Pro(\Sscr_{p\fin})$. Let $I$ be the image of the induced map $\pi_0(\mathcal{O}(Y)) \rightarrow \pi_0(\mathcal{O}(X))$ and set
   \[
   A = \mathcal{O}(Y) \otimes_{\pi_0(\mathcal{O}(Y))} I
   \]
    so that the induced map $\mathcal{O}(Y) \rightarrow \mathcal{O}(X)$ factors as
   \[
   \mathcal{O}(Y) \rightarrow A \rightarrow \mathcal{O}(X).
   \]
   Here the first map is flat and surjective on $\pi_0$, and the second map induces a monomorphism
        on $\pi_0$, and therefore it is ccff by Corollary~\ref{cor:absolutely flat}. Now $\Spec A
        \rightarrow Y$ is a monomorphism through which $f$ factors, so we have that $\mathcal{O}(Y)
        = A$ and hence the map $\mathcal{O}(Y) \rightarrow \mathcal{O}(X)$ is ccff, as desired.
   
   Conversely, assume that the map $\mathcal{O}(Y) \rightarrow \mathcal{O}(X)$ is ccff.  We wish to show that $Y$ is the geometric realization of the \v{C}ech nerve of $f$. This amounts to showing that $\mathcal{O}(Y)$ is the totalization of the \v{C}ech nerve of $\mathcal{O}(Y) \rightarrow \mathcal{O}(X)$, which follows from the fact that ccff maps have descent for coconnective modules.
   	\end{proof}

\begin{remark}
    Let $X$ be a pro-$p$-finite anima realized as a cofiltered diagram $\{X_i\}_{i\in I}$ where
    each $X_i\in\Sscr_{p\fin}$. Then, $\{\pi_0 X_i\}_{i\in I}$ is a profinite set, which we will
    write as $\pi_0X$, and
    $\pi_0\Oscr(X)\iso\Cont(\pi_0X,\bF_p)=\bF_p^{\pi_0X}$. It follows from
    Propositions~\ref{prop:ccff} and~\ref{prop:coherent} that $X\rightarrow Y$ is an effective
    epimorphism if and only if $\pi_0X\rightarrow\pi_0Y$ is, which occurs if and only if
    $\pi_0X\rightarrow\pi_0Y$ is a surjection of profinite sets.
\end{remark}

   We equip $\Pro(\Sscr_{p\fin})$ with the coherent topology; in other words, this is the Grothendieck topology where a sieve is a covering sieve if and only if it contains a finite jointly effectively epimorphic family. We denote by $\Shv^\hyp(\Pro(\Sscr_{p\fin}))$ the corresponding $\infty$-category of hypercomplete sheaves of anima. This is not an $\infty$-topos, but we can make the same
definition as in the definition of condensed sets by taking a colimit over versions where we
restrict attention to $\kappa$-cocompact objects for larger and larger cardinals $\kappa$.
Let $\Cond\Sscr$ denote the $\infty$-category of condensed anima in the sense
of~\cite{scholze-condensed}. We have $\Cond\Sscr\we\Shv^\hyp(\Pro(\Fin))$, the $\infty$-category of
hypersheaves of anima on the category of profinite sets with the coherent topology.

\begin{corollary}\label{cor:condensed}
    The natural restriction functor $\Shv^{\hyp}(\Pro(\Sscr_{p\fin}))\rightarrow\Cond\Sscr$ is an
    equivalence.
\end{corollary}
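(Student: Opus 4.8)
The plan is to identify both $\Shv^\hyp(\Pro(\Sscr_{p\fin}))$ and $\Cond\Sscr\we\Shv^\hyp(\Pro(\Fin))$ with the $\infty$-category of finite-product-preserving functors on the opposite of the category of extremally disconnected profinite sets, compatibly with the restriction functor, and then read off the equivalence. Write $\Escr$ for the full subcategory of $\Pro(\Sscr_{p\fin})$ — equivalently of $\Pro(\Fin)$, since its objects are $0$-truncated — spanned by the extremally disconnected profinite sets; under Stone duality these are the $\Spec k$ for $k$ a discrete $p$-Boolean ring with $\Spec k$ extremally disconnected, and $\Oscr(\Spec k)\iso k$. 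The argument rests on three properties of $\Escr$ viewed inside the coherent site $\Pro(\Sscr_{p\fin})$. First, \emph{density}: every object $X$ receives an effective epimorphism from an object of $\Escr$ — by Corollary~\ref{cor:maps_to_completes} the $p$-Boolean derived ring $\Oscr(X)$ admits a ccff map to a discrete $p$-Boolean $k$ with $\Spec k$ extremally disconnected, and by Proposition~\ref{prop:coherent} this is an effective epimorphism $\Spec k\to X$ with $\Spec k\in\Escr$. Second, \emph{projectivity}: every effective epimorphism $f\colon X\to E$ with $E\in\Escr$ splits — setting $k=\Oscr(E)$ and $R=\Oscr(X)$, Proposition~\ref{prop:coherent} together with the Remark characterizing effective epimorphisms shows $k\to\pi_0R$ is injective, so the generalized Gleason theorem (Theorem~\ref{thm:enough_projectives}) yields a retraction $R\to k$, which under the contravariant equivalence $\Oscr$ is a section of $f$. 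Third, \emph{stability of coproducts}: a finite disjoint union of extremally disconnected profinite sets is again extremally disconnected, and for a finite family $\{E_i\to E\}$ the single map $\coprod_iE_i\to E$ computes its joint-effective-epi condition.

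Granting these, I would argue as in the standard sheaf-theoretic presentation of condensed anima. For a cutoff cardinal $\kappa$, restriction along $\Escr_\kappa\hookrightarrow\Pro(\Sscr_{p\fin})_\kappa$ (the $\kappa$-cocompact objects) sends a hypercomplete sheaf to a finite-product-preserving functor $\Escr_\kappa^\op\to\Sscr$, and this restriction functor is an equivalence: density makes it essentially surjective — a product-preserving $F$ is extended by totalizing the cosimplicial object attached to a chosen effective epimorphism $E\to X$ with $E\in\Escr_\kappa$, independent of the choice because by projectivity any two such covers are dominated by a common one — and it is fully faithful by hypercomplete descent along hypercovers assembled from $\Escr_\kappa$ (one needs genuine hypercovers, not just \v{C}ech nerves, since $\Escr$ is not closed under fibre products, and this is where hypercompleteness enters). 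The identical argument, with the classical Gleason theorem in place of Theorem~\ref{thm:enough_projectives}, identifies $\Shv^\hyp(\Pro(\Fin)_\kappa)$ with the same $\infty$-category $\Fun^\times(\Escr_\kappa^\op,\Sscr)$, and the two identifications are manifestly compatible with restriction along $\Pro(\Fin)_\kappa\hookrightarrow\Pro(\Sscr_{p\fin})_\kappa$. Passing to the filtered colimit over $\kappa$ — which is how both sides are defined — shows the restriction functor $\Shv^\hyp(\Pro(\Sscr_{p\fin}))\to\Cond\Sscr$ is an equivalence.

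The main obstacle is this last step: converting density and projectivity of $\Escr$ into the actual equivalence, i.e.\ reducing the hypercomplete sheaf condition to finite-product-preservation on $\Escr$. This is a comparison-lemma argument in the $\infty$-categorical setting, complicated by the fact that $\Pro(\Sscr_{p\fin})$ with the coherent topology is not an $\infty$-topos (forcing the $\kappa$-by-$\kappa$ bookkeeping) and by the appearance of genuine hypercovers. An alternative that sidesteps the $\Fun^\times$ description is to apply the comparison lemma directly to the fully faithful inclusion $\Pro(\Fin)_\kappa\hookrightarrow\Pro(\Sscr_{p\fin})_\kappa$: it is topologically generating by density, and the coherent topology on $\Pro(\Fin)$ is induced from that on $\Pro(\Sscr_{p\fin})$ because effective epimorphisms between profinite sets agree in both categories (by the Remark after Proposition~\ref{prop:coherent}), so restriction is an equivalence of sheaf $\infty$-topoi and hence of their hypercompletions, again compatibly with the $\kappa$-filtrations defining both sides.
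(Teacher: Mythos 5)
Your proposal is correct and follows essentially the same route as the paper: density of the extremally disconnected objects via Corollary~\ref{cor:maps_to_completes} and Proposition~\ref{prop:coherent}, projectivity via the generalized Gleason theorem (Theorem~\ref{thm:enough_projectives}), the $\kappa$-cocompactness bookkeeping, and then the standard basis/comparison argument with hypercovers identifying both sides with product-preserving functors on extremally disconnected profinite sets. The paper simply compresses this last step into a citation of the standard basis argument, which you spell out.
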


\begin{proof}
    In light of Corollary~\ref{cor:maps_to_completes} and Theorem~\ref{thm:enough_projectives},
    it is enough to note that for uncountable strong limit cardinals $\kappa$ the covers constructed via
    Theorem~\ref{thm:boolean} of $\kappa$-cocompact pro-$p$-finite anima by extremally
    disconnected compact Hausdorff spaces are themselves $\kappa$-cocompact. Then, the result is a
    standard basis argument; see for example the proof of~\cite[Prop.~4.31]{bms2} (but use
    hypercovers) or~\cite[Cor.~A.7]{aoki-tensor}.
\end{proof}

\begin{remark}
    Corollary~\ref{cor:condensed} is an analogue of the Mathew--Mondal result~\cite{mathew-mondal} that identifies higher stacks with ccff
    stacks.
\end{remark}

\begin{remark}
    There is a `light' version of Corollary~\ref{cor:condensed}, where one considers only
    $\aleph_1$-small pro-systems, thanks to Theorem~\ref{thm:boolean}, which
    implies that any light pro-$p$-finite anima retracts onto its (light) profinite set of
    connected components.
\end{remark}

\begin{example}
    To evaluate a condensed anima $X$ on the pro-$p$-finite anima $T$, one
    realizes $T$ as $|S_\bullet|$ for a simplicial diagram of extremally disconnected compact
    Hausdorff spaces. Then, $X(T)\we\Tot(X(S_\bullet))$.
    Suppose now that $T$ is a compact
    Hausdorff space and similarly, write $T\we|S_\bullet|$ where $S_\bullet$ is a hypercover of $T$
    by extremally disconnected compact Hausdorff spaces. Then, the sheaf cohomology of $T$ with
    coefficients in $\bF_p$, $\C^*_{\mathrm{sheaf}}(T,\bF_p)$, is computed as
    $\Tot(\C^*_{\mathrm{sheaf}}(S_\bullet,\bF_p))\we\Tot(\bF_p^{S_\bullet})$.
    It is not the case that the natural map
    $\underline{T}\rightarrow\Spec(\C^*_{\mathrm{sheaf}}(T,\bF_p))$ is an equivalence, although it
    induces one on $\bF_p$-cohomology by~\cite[Thm.~3.2]{scholze-condensed}.
    For example, if $T=S^1$ with its usual topology, then $\underline{T}(*)\iso\Cont(*,S^1)\iso S^1$, as
    a set, while
    $\Spec(\C^*_{\mathrm{sheaf}}(T,\bF_p))(*)\we\Map_{\DAlg_{\bF_p}^{\varphi=1}}(\C^*_{\mathrm{sheaf}}(T,\bF_p),\bF_p)\we(\bT)_p^\wedge$, 
    the pro-$p$-finite completion of the anima $\bT$ underlying $S^1$.
\end{example}

\begin{corollary}\label{cor:boolean_descendable}
    Let $k$ be a Boolean ring and let $R$ be a $p$-Boolean derived $k$-algebra
    such that $\pi_0R$ is faithfully flat as a $k$-module. Then $k\rightarrow
    R$ is $\DAlg$-descendable (or even $p$-Boolean $\DAlg$-descendable) if
    \begin{enumerate}
        \item[{\em (i)}] $k$ has cardinality at most $\aleph_n$ or $k\rightarrow R$ is
            $\aleph_n$-compact for some natural number $n$, or
        \item[{\em (ii)}] $\Spec k$ is extremally disconnected.
    \end{enumerate}
\end{corollary}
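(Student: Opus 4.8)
The plan is to deduce this from Theorem~\ref{thm:boolean} by means of the retraction criterion of Proposition~\ref{prop:equivalent}. By the evident $p$-Boolean $\DAlg$-variant of that proposition it suffices to produce a retraction $\Tot^n(R^\bullet)\to k$ in $\DAlg_{\bF_p}^{\varphi=1}$ for some $n\geq 1$; since $p$-Boolean $\DAlg$-descendability implies $\DAlg$-descendability, this settles both assertions. Two preliminary observations: first, via the equivalence $\DAlg_{\bF_p}^{\varphi=1}\we\Pro(\Sscr_{p\fin})^{\op}$ together with To\"en's theory of affine stacks~\cite{toen-affines}, every $p$-Boolean derived $\bF_p$-algebra is coconnective; in particular $R$, and hence every tensor power $R^{\otimes_k m}$, is coconnective. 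Second, $\DAlg_{\bF_p}^{\varphi=1}$ is closed under finite limits (fact (1) about the forgetful functor recalled above), so $\Tot^n(R^\bullet)$ is again a coconnective $p$-Boolean derived $k$-algebra.

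The key computation is the low-degree homotopy of $\Tot^n(R^\bullet)$. Write $I=\fib(k\to R)$. As $k$ is absolutely flat, $\Tor^k_{>0}$ vanishes, so $\pi_*(M\otimes_k N)\iso\bigoplus_{i+j=*}\pi_iM\otimes_k\pi_jN$ for all $k$-modules; combined with coconnectivity of $R$ and with the fact that faithful flatness of $\pi_0R$ over $k$ forces the unit $k\to\pi_0R$ to be injective, the long exact sequence of $I\to k\to R$ gives $\pi_iI=0$ for $i\geq 0$, i.e.\ $I$ is $(-1)$-coconnective. Identifying $\fib(k\to\Tot^n(R^\bullet))$ with $I^{\otimes n+1}$ via~\cite[Prop.~2.14]{MNN17}, and using $\Tor$-vanishing once more to see that $I^{\otimes n+1}$ is $(-(n+1))$-coconnective, we conclude that for every $n\geq 1$ the unit induces an isomorphism $k\we\pi_0\Tot^n(R^\bullet)$ and that $\pi_{-i}\Tot^n(R^\bullet)=0$ for $1\leq i\leq n-1$.

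Now apply Theorem~\ref{thm:boolean} to the map $k\to\Tot^n(R^\bullet)$, whose target has $\pi_0\iso k$. Under hypothesis (ii) take $n=1$: Theorem~\ref{thm:boolean}(ii) directly produces a retraction. Under hypothesis (i), first reduce to the case that $k$ has cardinality at most $\aleph_{n_0}$: an $\aleph_{n_0}$-compact morphism is a base change of a morphism with base of cardinality at most $\aleph_{n_0}$, so the evident $\DAlg_{\bF_p}^{\varphi=1}$-analogue of Lemma~\ref{lem:base_change} reduces us to that situation. With $k$ of cardinality at most $\aleph_{n_0}$, take $n=n_0+1$, so that $\pi_{-i}\Tot^{n_0+1}(R^\bullet)=0$ for $1\leq i\leq n_0$; Theorem~\ref{thm:boolean}(i) with $N=n_0$ then yields the desired retraction $\Tot^{n_0+1}(R^\bullet)\to k$. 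In either case Proposition~\ref{prop:equivalent} gives $p$-Boolean $\DAlg$-descendability, hence $\DAlg$-descendability, of $k\to R$.

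The substantive content of the argument is entirely contained in Theorem~\ref{thm:boolean}: the inductive killing of negative homotopy groups, the vanishing of the obstruction classes in $\Ext^{m+1}(-,k)$ for $m$ large (via Osofsky's bound on global dimension, or self-injectivity in the extremally disconnected case), and the appeal to the classical Gleason theorem. Accordingly the main work here is the bookkeeping of the middle paragraphs, namely checking that the coconnectivity of $R$, the faithful flatness of $\pi_0R$, and the cardinality (resp.\ extremal disconnectedness) hypotheses all pass to the truncated totalizations $k\to\Tot^n(R^\bullet)$; the one mildly delicate point is the reduction, in the $\aleph_{n_0}$-compact case, to a morphism with small base, so as to avoid tracking $\aleph_{n_0}$-compactness through a finite limit.
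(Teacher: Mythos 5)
Your proposal is correct and follows essentially the same route as the paper: show that $\fib(k\rightarrow\Tot^m(R^\bullet))\we I^{\otimes m+1}$ is highly coconnective using absolute flatness of $k$, apply Theorem~\ref{thm:boolean} to get a $p$-Boolean derived $k$-algebra retraction of $k\rightarrow\Tot^m(R^\bullet)$, and conclude by the retraction criterion. The only (harmless) deviation is in case (i), where you reduce to a small base by base change before truncating, while the paper instead feeds the $\aleph_n$-compactness of $k\rightarrow\Tot^m(R^\bullet)$ directly into Theorem~\ref{thm:boolean}(i).
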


\begin{proof}
    By~\cite{mathew-mondal}, the map $k\rightarrow\Tot(R^\bullet)$ is an
    equivalence.
    The fiber $I$ of $k\rightarrow R$ is $(-1)$-coconnective, so the fiber $I^{\otimes m+1}$ of
    $k\rightarrow\Tot^m(R^\bullet)$ is $(-m-1)$-coconnective as $k$ is absolutely flat.
    In particular, we can arrange under the hypotheses of the corollary that for
    $m$ sufficiently large, $\Tot^m(R^\bullet)$ satisfies the hypotheses of
    Theorem~\ref{thm:boolean}, so there is a $p$-Boolean derived $k$-algebra
    retraction of $k\rightarrow\Tot^m(R^\bullet)$, as desired.
\end{proof}

\begin{theorem}\label{thm:points}
    If $R$ is a nonzero derived commutative $\bF_p$-algebra such that $\pi_0R$
    is $p$-Boolean, then $R$ admits a derived commutative $\pi_0R$-algebra map
    to $\bF_p$.
\end{theorem}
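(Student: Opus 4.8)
The plan is to pass from $R$ to its universal $p$-Boolean quotient and then apply the generalized Gleason theorem. First I would form the $p$-Boolean derived ring $R^{\varphi=1}$ obtained by applying to $R$ the left adjoint of the forgetful functor $\DAlg_{\bF_p}^{\varphi=1}\rightarrow\DAlg_{\bF_p}$; this left adjoint exists because the forgetful functor preserves limits and colimits, and its formal behavior is that recorded in~\cite{antieau-spherical}. (Geometrically, $\Spec(R^{\varphi=1})$ is the ``spatial locus'' of $\Spec R$.) The unit of the adjunction gives a map $R\rightarrow R^{\varphi=1}$ in $\DAlg_{\bF_p}$, so it will be enough to produce a map $R^{\varphi=1}\rightarrow\bF_p$.

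The crux is to identify $\pi_0(R^{\varphi=1})$ with the classical universal $p$-Boolean quotient $\pi_0(R)/(a^p-a:a\in\pi_0 R)$ of $\pi_0 R$; equivalently, the square relating the above left adjoint to the universal $p$-Boolean quotient functor $\CAlg_{\bF_p}\rightarrow\CAlg_{\bF_p}^{\varphi=1}$ should commute after applying $\pi_0$. Since $\pi_0 R$ is already $p$-Boolean, this yields $\pi_0(R^{\varphi=1})\iso\pi_0 R$, which is nonzero because $R\neq 0$; in particular $R^{\varphi=1}$ is a nonzero $p$-Boolean derived ring, the unit $R\rightarrow R^{\varphi=1}$ is the identity on $\pi_0$, and $R^{\varphi=1}$ is canonically an algebra over $\pi_0(R^{\varphi=1})\iso\pi_0 R$ (via $\Oscr$ applied to the projection to the profinite set of connected components).

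With this in hand I would apply the generalized Gleason theorem (Theorem~\ref{thm:enough_projectives}) to the map $\bF_p\rightarrow R^{\varphi=1}$: here $\bF_p$ is $p$-Boolean, $\Spec\bF_p$ is a point and hence extremally disconnected, $\bF_p\rightarrow R^{\varphi=1}$ is a map of $p$-Boolean derived rings, and the induced map $\bF_p\rightarrow\pi_0(R^{\varphi=1})\iso\pi_0 R$ is injective since $\pi_0 R$ is a nonzero $\bF_p$-algebra. The theorem then produces a retraction $R^{\varphi=1}\rightarrow\bF_p$, and the composite $R\rightarrow R^{\varphi=1}\rightarrow\bF_p$ is the desired point. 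On $\pi_0$ it is a ring homomorphism $\pi_0 R\rightarrow\bF_p$, it is $\pi_0 R$-linear for the resulting $\pi_0 R$-algebra structure on $\bF_p$, and by construction it factors through the spatial locus $\Spec(R^{\varphi=1})\rightarrow\Spec R$.

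The step I expect to require the most care is the computation of $\pi_0(R^{\varphi=1})$ — and, relatedly, the verification that $R^{\varphi=1}$ really lies in $\DAlg_{\bF_p}^{\varphi=1}$, so that Theorem~\ref{thm:enough_projectives} applies; both are properties of the $\varphi=1$-formalism of~\cite{antieau-spherical}. Granting these, the only substantive input is the generalized Gleason theorem, which supplies precisely the fact that a point is projective among pro-$p$-finite anima, and the rest of the argument is formal.
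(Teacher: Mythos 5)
Your overall strategy (Booleanize, then retract onto $\bF_p$ via the Gleason-type statement) is the same as the paper's, but the proposal has a genuine gap at its central step: the identification $\pi_0(R^{\varphi=1})\iso\pi_0R$, equivalently the nonvanishing of the absolute $p$-Booleanization together with control of its $\pi_0$. This is exactly the hard content of Theorem~\ref{thm:points}, and ``the square relating the left adjoint to the classical universal $p$-Boolean quotient should commute after applying $\pi_0$'' is not an argument: the left adjoint of $\DAlg_{\bF_p}^{\varphi=1}\rightarrow\DAlg_{\bF_p}$ is a colimit construction (a derived coequalizer of $\id$ and $\varphi$ with all higher coherences), and there is no general reason for such a construction to commute with $\pi_0$ or to be nonzero. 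The paper itself flags this delicacy by asking, as an open question, whether the $p$-Booleanization of a coconnective derived $\bF_p$-algebra whose $\pi_0$ has an $\bF_p$-point can vanish; you are asserting precisely this kind of control for free.

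The paper's proof is organized around avoiding that issue. It first replaces $R$ by its perfection, so that one may assume $R$ nonzero, perfect and coconnective, and then forms the \emph{relative} $p$-Booleanization over $k=\pi_0R$, namely $S=R\otimes_{R\otimes_{\pi_0R}R}R$, coequalizing $\id$ and $\varphi$ as $\pi_0R$-algebra maps. The nonvanishing and the computation $\pi_0S\iso\pi_0R$ then come from Proposition~\ref{prop:ccff}: since $\pi_0R$ is $p$-Boolean it is absolutely flat, and $\pi_0(R\otimes_{\pi_0R}R)\rightarrow\pi_0R$ is an isomorphism, so the multiplication $R\otimes_{\pi_0R}R\rightarrow R$ is ccff and its base change $R\rightarrow S$ is ccff, whence $S\neq 0$ and $\pi_0S\iso\pi_0R$ (and the whole construction is $\pi_0R$-linear by design). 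This mechanism is unavailable for the absolute Booleanization over $\bF_p$ that you use, where $\pi_0(R\otimes_{\bF_p}R)\iso\pi_0R\otimes_{\bF_p}\pi_0R\rightarrow\pi_0R$ is not an isomorphism, and your proposal offers no substitute. Your last step is fine in outline (the paper instead base changes along a chosen map $\pi_0R\rightarrow\bF_p$ and applies Theorem~\ref{thm:boolean}(i), while you invoke Theorem~\ref{thm:enough_projectives}; either works), but it only kicks in once one has produced a nonzero $p$-Boolean derived $\pi_0R$-algebra with the correct $\pi_0$, which is the part your argument does not establish.
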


\begin{proof}
    Possibly by replacing $R$ by $R_\perf$, we can assume that $R$ is nonzero perfect, coconnective, and
    $\pi_0R$ is $p$-Boolean. Note that
    $\pi_0(R\otimes_{\bF_p}R)\iso\pi_0R\otimes_{\bF_p}\pi_0R$. This ring is
    absolutely perfect by $p$-Booleanness. As $\pi_0R$ is $p$-Boolean, and hence derived
    $p$-Boolean, we can form the relative $p$-Booleanization of $R$:
    $$S=R\otimes_{R\otimes_{\pi_0R}R}R,$$
    where one map $R\otimes_{\pi_0R}R\rightarrow R$ is given by two copies of
    the identity and the other is given by the identity and Frobenius (which is
    naturally a $\pi_0R$-linear map since the Frobenius is the identity on
    $R$). The natural map $\pi_0(R\otimes_{\pi_0R}R)\rightarrow\pi_0R$ is an
    isomorphism. Thus, $R\otimes_{\pi_0R}R\rightarrow R$ is ccff by Proposition~\ref{prop:ccff} and the
    base change $R\rightarrow S$ is ccff as well. In particular, $S$ is a
    nonzero $p$-Boolean derived $\pi_0R$-algebra with $\pi_0R\iso\pi_0S$.
    From now on, let $k=\pi_0R\iso\pi_0S$. Pick a map $k\rightarrow \bF_p$
    and form the pushout $S'=S\otimes_k\bF_p$. By flatness, $\pi_0S'\iso\bF_p$
    and in particular $S'$ is non-zero. There is a retraction of
    $\bF_p\rightarrow S'$ by Theorem~\ref{thm:boolean}(i), so we are done.
\end{proof}

Say that an affine higher stack $\Xscr$ over $\bF_p$ is cohomologically connected if $\bF_p\iso\pi_0\R\Gamma(\Xscr,\Oscr)$.
As mentioned in the introduction, Kubrak pointed out that the following corollary is a consequence
of To\"en's~\cite[Thm.~2.4.5]{toen-affines}, which holds more generally over perfect fields.
Our proof is, however, different.

\begin{corollary}
    All cohomologically connected affine higher stacks over $\bF_p$ admit a rational point.
\end{corollary}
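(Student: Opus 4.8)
The plan is to deduce the statement immediately from Theorem~\ref{thm:points}. Recall from~\cite{toen-affines} (see also~\cite{mathew-mondal}) that an affine higher stack $\Xscr$ over $\bF_p$ is precisely the functor $\Spec R$ corepresented by a coconnective derived commutative $\bF_p$-algebra $R$, and that one recovers $R$ from $\Xscr$ as $R\iso\R\Gamma(\Xscr,\Oscr)$. Under this dictionary, a rational point of $\Xscr$, i.e.\ a morphism $\Spec\bF_p\rightarrow\Xscr$, is the same datum as a map $R\rightarrow\bF_p$ in $\DAlg_{\bF_p}$.

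First I would observe that the cohomological connectedness hypothesis is exactly the statement that $\pi_0 R\iso\bF_p$; in particular $R$ is nonzero and $\pi_0 R$ is $p$-Boolean. Theorem~\ref{thm:points} then applies directly and produces a derived commutative $\pi_0 R$-algebra map $R\rightarrow\bF_p$, which, since $\pi_0 R\iso\bF_p$, is precisely a rational point of $\Xscr$.

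For the refinement that the point can be taken inside the spatial locus $\Xscr^{\varphi=1}\rightarrow\Xscr$, I would trace through the construction in the proof of Theorem~\ref{thm:points}: the map it produces factors as $R\rightarrow S\rightarrow S\otimes_{\pi_0 R}\bF_p\rightarrow\bF_p$, where $S=R\otimes_{R\otimes_{\pi_0 R}R}R$ is the relative $p$-Booleanization of $R$. As $\pi_0 R$ is already $p$-Boolean, $S$ is a $p$-Boolean derived ring and $\Spec S$ is the spatial locus $\Xscr^{\varphi=1}$; hence the rational point factors through $\Xscr^{\varphi=1}\rightarrow\Xscr$.

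The only content beyond Theorem~\ref{thm:points} is the dictionary between affine higher stacks and coconnective derived commutative rings together with the translation of ``rational point'' into an algebra map to $\bF_p$; there is no substantive obstacle here, since all of the homotopy-theoretic difficulty was already resolved by the Boolean-ring arguments of Section~\ref{sec:boolean} underlying Theorem~\ref{thm:points}.
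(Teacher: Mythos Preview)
Your proposal is correct and matches the paper's approach exactly: the paper states this corollary without proof immediately after Theorem~\ref{thm:points}, since the dictionary between affine higher stacks and coconnective derived commutative rings together with the translation of cohomological connectedness as $\pi_0 R\iso\bF_p$ reduces it to a direct application of that theorem. Your additional paragraph on the spatial locus is also accurate and in line with the remark following Corollary~\ref{cor:intro_affine} in the introduction.
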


\begin{question}
    Is there a coconnective derived commutative $\bF_p$-algebra $R$ such that $\pi_0R$ has a map to
    $\bF_p$ but where the $p$-Booleanization of $R$ vanishes?
\end{question}

To conclude, we note the connection, used in the proof of Theorem~\ref{thm:boolean}, between self-injectivity of $R$ and extremal disconnectedness
of $\Spec R$ for $p$-Boolean $R$. These notions are also related to the condition that $R$ is
complete. Recall that a Boolean ring is complete if every nonempty subset has a supremum with
respect to the partial order where for $x,y\in R$ one has $x\leq y$ if and only if $xy=x$.
For $p$-Boolean rings, one imposes a partial order on the set of principal ideals in $R$ where
$(x)\leq (y)$ if and only $(xy)=(x)$. Ko Aoki explained to us the proof of (iii) implies (i).

\begin{lemma}\label{lem:injective}
    Let $R$ be a $p$-Boolean ring. The following conditions are equivalent:
    \begin{enumerate}
        \item[{\em (i)}] $R$ is complete;
        \item[{\em (ii)}] $\Spec R$ is extremally disconnected;
        \item[{\em (iii)}] $R$ is self-injective.
    \end{enumerate}
\end{lemma}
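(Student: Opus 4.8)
The plan is to translate the entire statement into topology via Stone duality. Write $X=\Spec R$; since $R$ is $p$-Boolean it is von Neumann regular, so $X$ is a Stone (profinite) space and $R$ is canonically identified with $C(X,\bF_p)$, the ring of locally constant $\bF_p$-valued functions. Under this identification, idempotents of $R$ correspond to clopen subsets of $X$; every principal ideal is generated by an idempotent, since for $x\in R$ the element $x^{p-1}$ is idempotent with $(x)=(x^{p-1})$ and $\operatorname{supp}(x^{p-1})=\operatorname{supp}(x)$; and, by definition, $R$ is complete exactly when the Boolean algebra of clopen subsets of $X$ is a complete lattice. With this dictionary the equivalence (i)$\iff$(ii) is precisely the classical theorem that the Stone space of a Boolean algebra $B$ is extremally disconnected if and only if $B$ is complete, which I would simply cite.

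For (ii)$\Rightarrow$(iii) I would use Baer's criterion: $R$ is self-injective iff every $R$-linear map $\phi\colon I\to R$ out of an ideal is given by multiplication by an element of $R$. First I would note that every ideal of $R$ has the form $I_U=\{f\in R:\operatorname{supp}(f)\subseteq U\}$ for a unique open $U\subseteq X$ --- the key point being that $f^{p-1}=e_{\operatorname{supp}(f)}$, so the support idempotents of elements of $I$ already lie in $I$ --- and that $I_U$ is generated over $R$ by the idempotents $e_V$ with $V\subseteq U$ clopen. Given $\phi$, the elements $g_V:=\phi(e_V)$ satisfy $e_Vg_V=g_V$ and $g_W=e_Wg_V$ for $W\subseteq V$, hence glue to a locally constant function $g\colon U\to\bF_p$; it then suffices to extend $g$ to a function $\bar g\in C(X,\bF_p)$, since then $\phi(e_V)=\bar g\, e_V$ for all clopen $V\subseteq U$ and so $\phi$ is multiplication by $\bar g$. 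This extension is where extremal disconnectedness enters: the fibers $W_a=g^{-1}(a)$, $a\in\bF_p$, are pairwise disjoint opens in $X$, so their closures $\overline{W_a}$ are pairwise disjoint and clopen, the finite union $\bigcup_a\overline{W_a}$ is clopen, and setting $\bar g=a$ on $\overline{W_a}$ and $\bar g=0$ on the clopen complement defines a locally constant extension of $g$.

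For (iii)$\Rightarrow$(ii) I would follow the argument Aoki explained to us, applying self-injectivity once. Fix an open $U\subseteq X$ and set $U'=X\setminus\overline U$. Then $I_U\cap I_{U'}=0$, so there is a well-defined $R$-linear map $\phi$ on the ideal $I_U+I_{U'}$ (an internal direct sum $I_U\oplus I_{U'}$) which is the inclusion on the first summand and zero on the second. By self-injectivity, $\phi$ is multiplication by some $e\in R$. From $\phi|_{I_U}$ being the inclusion we get $e\,e_V=e_V$ for every clopen $V\subseteq U$, i.e. $e\equiv 1$ on $U$, whence $\overline U\subseteq\operatorname{supp}(e)$ because $\operatorname{supp}(e)$ is clopen (supports of elements of $C(X,\bF_p)$ are clopen). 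From $\phi|_{I_{U'}}=0$ we get $e\,e_V=0$ for every clopen $V\subseteq U'$, i.e. $\operatorname{supp}(e)\cap U'=\emptyset$, so $\operatorname{supp}(e)\subseteq X\setminus U'=\overline U$. Hence $\overline U=\operatorname{supp}(e)$ is clopen, and as $U$ was arbitrary, $X$ is extremally disconnected.

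The bookkeeping items --- that every ideal is of the form $I_U$, the gluing of the $g_V$, and the fact that clopens are cofinal among the opens of a Stone space --- should all be routine. Beyond Stone duality, the one genuinely load-bearing input is the characterization of extremal disconnectedness via closures of opens being clopen (equivalently, disjoint opens having disjoint closures), on which both nontrivial implications turn; the only real cleverness is the observation in (iii)$\Rightarrow$(ii) that a single application of Baer's criterion manufactures exactly the idempotent cutting out $\overline U$.
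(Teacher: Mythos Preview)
Your proposal is correct. The implication (iii)$\Rightarrow$(ii) is in fact the same argument as the paper's (iii)$\Rightarrow$(i), translated through Stone duality: since $\ann(I_U)=I_{X\setminus\overline U}=I_{U'}$, your map $I_U\oplus I_{U'}\to R$ is exactly the paper's $I\oplus\ann(I)\to R$, and ``$\overline U=\operatorname{supp}(e)$ is clopen'' is the topological reading of ``$\ann(I)=(1-a)$ is principal''. The genuine divergence is in (ii)$\Rightarrow$(iii). The paper invokes Gleason's theorem (extremally disconnected $=$ projective in compact Hausdorff) to split the surjection $\Spec\prod_X\bF_p\to X$, exhibiting $R$ as an $R$-module retract of the injective $\prod_X\bF_p$; you instead verify Baer's criterion directly by classifying ideals as $I_U$ and extending the locally constant $g\colon U\to\bF_p$ fiberwise via the ``disjoint opens have disjoint closures'' form of extremal disconnectedness. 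Your argument is more self-contained---it uses only the open/closed characterization of extremal disconnectedness rather than importing the projectivity theorem---and makes the ideal structure explicit; the paper's argument is shorter and, as a byproduct, identifies a concrete injective module containing $R$ as a summand.
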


\begin{proof}
    The equivalence between (i) and (ii) is given in~\cite[22.4]{sikorski}.
    The implication (i) implies (iii) is~\cite[Cor.~4]{brainerd-lambek}.
    The converse (iii) implies (i) is~\cite[Sec.~2.4]{lambek}.
    For the reader's convenience, we will show here that (i) implies (ii) implies (iii) implies (i)
    in the case where $p=2$, the classical Boolean case. The general case is similar.
    To do this, we will use both characterizations of extremally disconnected compact Hausdorff spaces: that
    they are the projective objects in compact Hausdorff spaces and that they are the compact
    Hausdorff spaces for which the closure of every open set is open;
    see~\cite[\href{https://stacks.math.columbia.edu/tag/08YN}{Tag 08YN}]{stacks-project}.
    Assume (i). Let $X=\Spec R$ and let $U\subseteq X$ be open. Write $U$ as a union
    $U=\bigcup_{a\in\Lambda}X_a$ of basic open subsets of $X$. Here, $X_a$ denotes the set of
    primes not containing $a$. We will also write $V(b)$ for the closed subset of primes 
    containing $b$. Every closed subset of $X$ is an intersection of closed subsets of the form
    $V(b)$, so the closure $\overline{U}$ of $U$ is described as $$\overline{U}=\bigcap_{b\in\Gamma}V(b),$$
    where $\Gamma$ is the set of $b$ such that $X_a\subseteq V(b)$ for all $a\in\Lambda$.
    As $X_a=V(1-a)$, we have $X_a=V(1-a)\subseteq V(b)$ if and only if $b\leq
    1-a$ if and only if $a\leq 1-b$. Let $c$ be the supremum of the set $\Lambda$,
    which exists by the assumption that $R$ is complete. Then, $a\leq c\leq 1-b$ for all
    $a\in\Lambda$ and $b\in\Gamma$. Thus, $X_a\subseteq V(1-c)\subseteq V(b)$
    for all $a\in\Lambda$ and $b\in\Gamma$. It follows that $\overline{U}=V(1-c)=X_c$, which
    is clopen. Hence, (ii).

    Assume (ii). Consider the natural map $R\rightarrow\prod_X\bF_p$, where the product is over all
    points of the prime spectrum and the composite $R\rightarrow\bF_p$ to a factor corresponding to
    a prime is precisely the reduction modulo that prime. This map is injective as its kernel is
    the nilradical of $R$, which is zero. Thus, $\Spec \prod_X\bF_p\rightarrow X$ is
    surjective. By extremal disconnectedness, this map admits a section which implies in particular
    that $R$ is a retract of $\prod_X\bF_p$ as an $R$-module.
    Now, since each $R\rightarrow\bF_p$ is flat and $\bF_p$
    is injective as a module over itself, each $\bF_p$ is injective as an $R$-module. Thus,
    $\prod_X\bF_p$ is injective as an $R$-module, so $R$ is injective as it is a retract of an
    injective module. This proves (iii).

    Claim: a subset $Z$ of $R$ has a supremum $a$ in $R$ if and only if $\ann(Z)$ is principal and
    equal to $(1-a)$. If $a=\sup(Z)$, then $z\leq a$ for all $z\in Z$, so that $az=z$ and hence
    $(1-a)z=0$ for all $z\in Z$. Thus, $(1-a)\in\ann(Z)$. If $b\in\ann(Z)$, then $bz=0$ for all
    $z\in Z$, so $(1-b)z=z$ for all $z\in Z$. Thus, $z\leq 1-b$ for all $z\in Z$. Hence, $a\leq
    1-b$, or $1-a\geq b$. Thus, $b\in(1-a)$ and we have shown that $\ann(Z)$ is principal. The
    converse is similar. Thus, every annihilator is principal if and only if $R$ is complete.
    Now, in order to prove (iii) implies (i), it is enough to show under the assumption that $R$ is self-injective that every
    annihilator is principal.

    Let $Z\subseteq R$ be a subset and let $I$ be the ideal generated by $Z$. Note that
    $\ann(I)=\ann(Z)$. The intersection of $I$ and $\ann(I)$ is $(0)$ so that the natural inclusion
    map $I\oplus\ann(I)\rightarrow R$ is injective. Consider $f\colon I\oplus\ann(I)\rightarrow R$,
    defined by $f(x,z)=x$. By self-injectivity of $R$ there exists $a\in R$ such that
    $f(x,z)=x=a(x+z)$ for all $x\in I$ and $z\in\ann(I)$. In particular, $ax=x$ for $x\in I$
    and $az=0$ for $z\in\ann(I)$. It follows that $I\subseteq(a)$ and thus
    $(1-a)=\ann(a)\subseteq\ann(I)$, which implies $\ann(\ann(I))\subseteq\ann(1-a)=(a)$. But, we
    also have that $a\in\ann(\ann(I))$ by construction,
    which proves that $\ann(\ann(I))=(a)$. Now, $\ann(I)=\ann(\ann(\ann(I)))=\ann(a)=(1-a)$ is
    principal. Here we used the $3$-Boolean property of annihilators.
\end{proof}

\begin{example}
    A countable, non-finite $p$-Boolean ring is not self-injective. Indeed, it is not complete for cardinality
    reasons.
\end{example}

\section{Faithfully flat maps in characteristic $0$}\label{sec:noetherian}

We show in this section that if $R$ is a regular Noetherian $\bQ$-algebra of finite Krull dimension, then every faithfully
flat map $R\rightarrow S$ is $\bE_\infty$-descendable.

\begin{definition}
    Let $R$ be a commutative ring. A map $R\rightarrow S$ of $\bE_\infty$-rings is $n$-good if its fiber has
    Tor-amplitude bounded above by $-n$.
\end{definition}

\begin{example}
    A flat map $R\rightarrow S$ of commutative rings is $0$-good. A faithfully flat map
    $R\rightarrow S$ of commutative rings is $1$-good.
\end{example}

\begin{example}
    An extension of scalars of an $n$-good map is $n$-good.
\end{example}

We recall the following lemma from~\cite[Lem.~2.13]{mathew-mondal}.

\begin{lemma}\label{lem:tramplitude2}
    Suppose that $R$ is a commutative ring and that $S$ is a derived commutative $R$-algebra with
    an augmentation $S\rightarrow R$. Assume moreover that the fiber of the augmentation has
    Tor-amplitude bounded above by $-1$ as an $R$-module. If an
    $S$-module $M$ has Tor-amplitude bounded above by $n$ as an $R$-module, then it has
    Tor-amplitude bounded above by $n$ as an $S$-module.
\end{lemma}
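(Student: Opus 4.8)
The plan is to bootstrap from the module-level statement to the $S$-module statement using the augmentation. Let $I = \fib(S \to R)$, so by hypothesis $I$ has Tor-amplitude $\leq -1$ over $R$. The key observation is that we have a base-change/bar-type resolution expressing any $S$-module in terms of induced modules. Concretely, for an $S$-module $M$, consider the cofiber sequence coming from the augmentation ideal: there is a filtration (or a cosimplicial/simplicial resolution) built from the powers of $I$. The cleanest approach is to use the cobar or bar resolution of $M$ relative to the augmentation $S \to R$: the $S$-module $M$ is recovered as the totalization (or geometric realization) of the cosimplicial object whose $n$-th term is $M \otimes_R (\text{something built from } R)$, but since we want an \emph{upper} Tor-amplitude bound, the relevant device is rather the filtration whose associated graded pieces are of the form $M \otimes_R I^{\otimes k}[k]$-shifted appropriately, obtained by iterating the cofiber sequence $M \otimes_S I \to M \to M \otimes_S R$ (where the last term is viewed via $S \to R$ as an $R$-module induced up). Each associated graded piece then has Tor-amplitude over $S$ bounded above by $n$ because it is induced from an $R$-module with that bound (extension of scalars preserves upper Tor-amplitude), and the bound $I$ has Tor-amplitude $\leq -1$ makes the filtration convergent in the coconnective direction, so the bound is inherited by $M$ itself.

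The steps, in order, are as follows. \textbf{Step 1:} Reduce to testing Tor-amplitude on the generators: $M$ has Tor-amplitude $\leq n$ over $S$ iff $M \otimes_S N \in \D(S)_{\leq n}$ for all $N \in \D(S)_{\leq 0}$, and it suffices to check this after forgetting to $\D(S)_{\leq 0}$, i.e., to show $\pi_i(M \otimes_S N) = 0$ for $i > n$ and all coconnective $S$-modules $N$. \textbf{Step 2:} For a fixed coconnective $S$-module $N$, build the $I$-adic tower: set $N^{(0)} = N$ and inductively $N^{(k+1)} = \fib(N^{(k)} \to N^{(k)} \otimes_S R)$, so $N^{(k)} \simeq N \otimes_S I^{\otimes k}$ and the successive cofibers are $\cof(N^{(k+1)} \to N^{(k)}) \simeq (N \otimes_S I^{\otimes k}) \otimes_S R \simeq (N \otimes_S I^{\otimes k}) \otimes_R R$, an $R$-module induced up to... wait, more precisely this cofiber is an $R$-module base-changed to $S$ trivially since it already lands over $R$. \textbf{Step 3:} Since $I$ has Tor-amplitude $\leq -1$ over $R$ and $N$ is coconnective, $N \otimes_S I^{\otimes k}$ becomes increasingly coconnective: $N \otimes_S I^{\otimes k} \in \D(S)_{\leq -k}$ (using that $S$ acts through modules and the $R$-module Tor-amplitude bound applies after restricting scalars along $S \to R$ on the relevant factor). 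Hence the tower converges: $N \simeq \lim_k (N / N^{(k)})$ with the homotopy limit in fact a finite stage in each degree. \textbf{Step 4:} For each cofiber $C_k := \cof(N^{(k+1)} \to N^{(k)})$, which is an $R$-module (regarded as an $S$-module via $S \to R$), compute $M \otimes_S C_k \simeq M \otimes_S (S \otimes_? \cdots)$; the point is that $M \otimes_S C_k$ has Tor-amplitude $\leq n$ because $M$ has Tor-amplitude $\leq n$ over $R$ by hypothesis and $C_k$ is an $R$-module, so $M \otimes_S C_k \simeq M \otimes_R C_k$ (using the $R$-module structure on $C_k$ coming through $S \to R$)—here one uses that $M$ as an $S$-module restricts to an $R$-module with the stated bound, and $C_k$ is an $R$-module, so their tensor over $R$ has Tor-amplitude $\leq n$ over $R$, a fortiori is in $\D(S)_{\leq n}$ after the identification. \textbf{Step 5:} Assemble: $M \otimes_S N$ is the limit of $M \otimes_S (N/N^{(k)})$, each of which is a finite extension of the $C_j \otimes$-terms, all in $\D(S)_{\leq n}$; since in each fixed homotopical degree the tower stabilizes (by Step 3), $\pi_i(M \otimes_S N) = 0$ for $i > n$.

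The main obstacle I expect is \textbf{Step 4}: making precise the identification $M \otimes_S C_k \simeq M \otimes_R C_k$ and verifying it genuinely has Tor-amplitude $\leq n$ over $S$ and not just over $R$. The subtlety is that Tor-amplitude over $S$ is tested against coconnective $S$-modules, whereas the hypothesis gives a bound tested against coconnective $R$-modules; one must check that restriction of scalars $\D(R) \to \D(S)$ (along $S \to R$) carries $\D(R)_{\leq 0}$ into $\D(S)_{\leq 0}$ and that the relevant tensor identities hold. This is where the hypothesis that the augmentation's fiber has Tor-amplitude $\leq -1$ is doing real work—it is what guarantees $S \to R$ is suitably well-behaved (in particular that restriction along it is compatible with the $t$-structures in the needed direction, since $R$ is a retract-like quotient of $S$). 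Once that compatibility is in hand, the filtration argument is formal, and the convergence in Step 3 is exactly the place the $-1$ bound on $I$ is indispensable.
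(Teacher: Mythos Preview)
There is a genuine gap, centered exactly where you anticipated: Step~4. The identification $M \otimes_S C_k \simeq M \otimes_R C_k$ is false. If $C$ is an $R$-module viewed as an $S$-module via the augmentation $\epsilon\colon S \to R$, then $M \otimes_S C \simeq (M \otimes_S R) \otimes_R C$, not $M \otimes_R C$ (where the latter uses the $R$-module structure on $M$ coming from the unit $R \to S$). For a concrete counterexample take $M = S$ and $C = R$: the left side is $R$ while the right side is $S$. There is no evident way to bound the Tor-amplitude of $M \otimes_S R$ over $R$ from the hypotheses without already knowing the lemma. Step~3 is likewise circular: showing $N \otimes_S I^{\otimes_S k} \in \D_{\leq -k}$ requires $I$ to have Tor-amplitude $\leq -1$ \emph{over $S$}, which is the case $M = I$, $n = -1$ of the very statement you are proving.

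The fix is to arrange that every tensor product appearing is over $R$, so that the hypotheses apply directly. Compute $M \otimes_S N$ as the geometric realization of the two-sided bar construction $B_\bullet(M,S,N)$ with $B_k = M \otimes_R S^{\otimes_R k} \otimes_R N$. Because the augmentation splits the unit, $\cofib(R \to S) \simeq I$ as an $R$-module, and the normalized $k$-th term is $M \otimes_R I^{\otimes_R k} \otimes_R N$. Since $I$ has Tor-amplitude $\leq -1$ over $R$ and $N$ is coconnective, $I^{\otimes_R k} \otimes_R N \in \D(R)_{\leq -k}$; since $M$ has Tor-amplitude $\leq n$ over $R$, the normalized term lies in $\D_{\leq n-k}$, so its shift by $[k]$ (the $k$-th graded piece of the skeletal filtration) lies in $\D_{\leq n}$. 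Each skeleton, and hence the colimit $M \otimes_S N$, therefore lies in $\D_{\leq n}$. (The paper itself does not supply a proof of this lemma; it is quoted from Mathew--Mondal.)
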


\begin{lemma}\label{lem:ngood_induct}
    Suppose that $R$ is a commutative $\bQ$-algebra and that $R\rightarrow S$ is $n$-good where
    $n\geq 1$.
    If $\LSym_R Q[-m]\rightarrow S$ is any map where $m\geq n$ and $Q$ is a flat $R$-module, then
    the pushout $T$, defined by the commutative diagram
    $$\xymatrix{
        \LSym_R Q[-m]\ar[r]\ar[d]&S\ar[d]\\
        R\ar[r]&T,
    }$$
    is $n$-good over $R$.
\end{lemma}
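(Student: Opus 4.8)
The plan is to describe the underlying $R$-module of $T$ via a Koszul--Tate resolution of the augmentation of a free derived commutative algebra, which is well behaved because we work over $\bQ$. Write $A=\LSym_R Q[-m]$, let $\epsilon\colon A\to R$ be the augmentation that kills $Q$, and let $\alpha\colon A\to S$ be the given map, so that $T=S\otimes_A R$, with the pushout formed in derived commutative $R$-algebras. First I would reduce to a statement about $\fib(S\to T)$: the composite $R\to S\to T$ yields a fiber sequence $\fib(R\to S)\to\fib(R\to T)\to\fib(S\to T)$, and since $R\to S$ is $n$-good and the $R$-modules of Tor-amplitude bounded above by $-n$ are closed under extensions, it suffices to prove that $\fib(S\to T)$ has Tor-amplitude bounded above by $-n$. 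As $T=S\otimes_A R$, there is an equivalence $\fib(S\to T)\we S\otimes_A\fib(A\xrightarrow{\epsilon}R)$, so it remains to understand the augmentation ideal $\fib(A\to R)$ as a filtered $A$-module and then base change along $\alpha$.

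The key input is the structure of the free derived commutative algebra $A=\LSym_R Q[-m]$ over $\bQ$: the $A$-module $R$ admits an exhaustive increasing filtration with $F_{\leq 0}R\we A$, with $F_{\leq 0}R\to R$ equal to $\epsilon$, and with $\gr_p R\we A\otimes_R\LSym^p_R(Q[1-m])$ for $p\geq 0$. This is a filtered refinement of the Koszul duality equivalence $R\otimes_A R\we\LSym_R(Q[1-m])$; compare~\cite{raksit}. Since $Q$ is flat and we are over $\bQ$, d\'ecalage identifies $\LSym^p_R(Q[1-m])$ with $(\Sym^p_R Q)[p(1-m)]$ when $m$ is odd and with $(\Lambda^p_R Q)[p(1-m)]$ when $m$ is even; in either case this is a flat $R$-module placed in homological degree $p(1-m)$, hence of Tor-amplitude bounded above by $p(1-m)$. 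Because $\fib(A\to R)\we(R/F_{\leq 0}R)[-1]$ and $R/F_{\leq 0}R$ inherits a filtration with $\gr_p$ equal to $A\otimes_R\LSym^p_R(Q[1-m])$ for $p\geq 1$, the $A$-module $\fib(A\to R)$ is filtered with graded pieces $\bigl(A\otimes_R\LSym^p_R(Q[1-m])\bigr)[-1]$ for $p\geq 1$. Applying the exact functor $S\otimes_A(-)$, the object $\fib(S\to T)$ then carries an exhaustive increasing filtration whose $p$-th graded piece is $\bigl(S\otimes_R\LSym^p_R(Q[1-m])\bigr)[-1]$ for $p\geq 1$.

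It remains to bound Tor-amplitudes. One first observes that $S$ has Tor-amplitude bounded above by $0$ over $R$: since $R\to S$ is $n$-good with $n\geq 1$, the fiber $\fib(R\to S)$ has Tor-amplitude bounded above by $-1$, so for $N\in\D(R)_{\leq 0}$ the fiber sequence $\fib(R\to S)\otimes_R N\to N\to S\otimes_R N$ shows $S\otimes_R N\in\D(R)_{\leq 0}$. Using that the $R$-module tensor product of a module of Tor-amplitude bounded above by $a$ with one of Tor-amplitude bounded above by $b$ has Tor-amplitude bounded above by $a+b$, the $p$-th graded piece of $\fib(S\to T)$ has Tor-amplitude bounded above by $p(1-m)-1\leq(1-m)-1=-m\leq-n$, where the middle inequality uses $p\geq 1$ together with $1-m\leq 0$, and the last uses $m\geq n$. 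Since the class of $R$-modules of Tor-amplitude bounded above by $-n$ is closed under extensions and filtered colimits, $\fib(S\to T)$ has Tor-amplitude bounded above by $-n$, and hence so does $\fib(R\to T)$ by the first step; that is, $T$ is $n$-good over $R$. The step I expect to require the most care is the second: pinning down the Koszul--Tate filtration of $R$ over $A$ with the stated associated graded over $\bQ$ --- the d\'ecalage bookkeeping for the parity of the shift, the identification of $\LSym^p_R$ of a flat module with the classical $\Sym^p_R$ or $\Lambda^p_R$ (reducing to the finite projective case by Lazard's theorem, flatness being automatic over $\bQ$ since these are retracts of tensor powers), and the check that this filtration base changes along $\alpha$ to give the claimed filtration on $\fib(S\to T)$.
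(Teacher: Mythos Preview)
Your proof is correct, but it takes a genuinely different route from the paper's. The paper argues via a $3\times 3$ diagram built from the square with rows $R=R$ and $A\to S$: setting $F_1[1]=\fib(A\to S)$, $F_2=\fib(R\to A)$, $F_3=\fib(R\to S)$, one gets a fiber sequence $F_1\to F_2\to F_3$; since $F_3$ has Tor-amplitude $\leq -n$ by hypothesis and $F_2$ has Tor-amplitude $\leq -m-1$ (from the weight decomposition of $A$ over $\bQ$), one finds that $F_1[1]$ has Tor-amplitude $\leq -n$ over $R$. The paper then invokes the Mathew--Mondal lemma (recorded in the paper as Lemma~\ref{lem:tramplitude2}) to upgrade this to a Tor-amplitude bound over $A$, and concludes by identifying $\fib(R\to T)$ with $F_1[1]\otimes_A R$.

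Your argument instead decomposes $\fib(R\to T)$ via the composite $R\to S\to T$, identifies $\fib(S\to T)\simeq S\otimes_A\fib(A\to R)$, and then controls the latter by the Koszul--Tate filtration on $R$ over $A=\LSym_R Q[-m]$, with associated graded $A\otimes_R\LSym^p_R(Q[1-m])$. This is more hands-on: you are in effect re-proving the special case of Lemma~\ref{lem:tramplitude2} that is needed here, by exhibiting an explicit filtered resolution rather than quoting the abstract transfer-of-Tor-amplitude statement. The payoff of the paper's route is brevity and a clean separation of concerns (the transfer lemma is a reusable black box); the payoff of your route is that it is self-contained and makes the role of the characteristic-zero d\'ecalage completely transparent in the graded pieces. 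The step you flagged as needing the most care---pinning down the Koszul filtration on $R$ as an $A$-module with the stated associated graded---is indeed the crux, but it is standard over $\bQ$: one can realize $R$ as $\LSym_R(\mathrm{cone}(Q[-m]\xrightarrow{\id}Q[-m]))$, filter by the number of factors from $Q[1-m]$, and observe that this is an increasing exhaustive filtration by $A$-submodules with the claimed associated graded.
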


\begin{proof}
    Consider the commutative diagram
    $$\xymatrix{
        R\ar@{=}[r]\ar[d]&R\ar[d]\\
        \LSym_R Q[-m]\ar[r]&S
    }$$
    and take horizontal and vertical fibers to obtain a commutative diagram
    $$\xymatrix{
        F_1\ar[r]\ar[d]&F_2\ar[r]\ar[d]&F_3\ar[d]\\
        0\ar[r]\ar[d]&R\ar@[=][r]\ar[d]&R\ar[d]\\
        F_1[1]\ar[r]&\LSym_R Q[-m]\ar[r]&S
    }$$
    whose rows and columns are fiber sequences.
    By hypothesis, $F_3$ has Tor-amplitude bounded above by $-n$ as an $R$-module and one checks that $F_2$ has
    Tor-amplitude bounded above by $-m-1$ as an $R$-module. Since $m\geq n$, it
    follows that $F_1$ has Tor-amplitude bounded above by $-n-1$ and hence that $F_1[1]$ has
    Tor-amplitude bounded above by $-n$ as an $R$-module. By Lemma~\ref{lem:tramplitude2}, it
    follows that $F_1[1]$ has Tor-amplitude bounded above by $-n$ as an $\LSym_RQ[-m]$-module and
    hence
    $F_1[1]\otimes_{\LSym_RQ[-m]}R$ has Tor-amplitude bounded above by $-n$ as an $R$-module by
    base change. This is equivalent to the fiber of $R\rightarrow T$, so $T$ is $n$-good.
\end{proof}

\begin{lemma}\label{lem:in_the_limit}
    Let $\bQ\subseteq R$ be a commutative ring and suppose that $R\rightarrow S$ is $n$-good for some $n\geq 1$. Then, there is an extension
    $S\rightarrow W$ such that
    \begin{enumerate}
        \item[{\em (a)}] $W$ is $n$-good over $R$,
        \item[{\em (b)}] $\pi_iW=0$ for $i\neq 0,-n+1$, and
        \item[{\em (c)}] $\pi_{-n+1}W$ is a flat $R$-module.
    \end{enumerate}
    If $n=1$, then $W$ is discrete and $R\rightarrow W$ is faithfully flat.
    If $n\geq 2$, then $R\iso\pi_0W$.
\end{lemma}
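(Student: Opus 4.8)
The plan is to build $W$ as the colimit of a transfinite sequence $S = W_0\to W_1\to\cdots$ of pushouts of the type appearing in Lemma~\ref{lem:ngood_induct}, each killing one negative homotopy group in degree $\leq -n$, and then to check that in the colimit all homotopy in degrees $\leq -n$ has disappeared. First note that $n$-goodness forces $S$ to be coconnective (apply the long exact sequence to $\fib(R\to S)\to R\to S$, using $\fib(R\to S)\in\D(R)_{\leq -n}$), and moreover $\pi_i S = 0$ for $-n+2\leq i\leq -1$, and $\pi_0 S\cong R$ when $n\geq 2$. Given an $n$-good coconnective $W_s$ and a degree $-m\leq -n$ with $\pi_{-m}W_s\neq 0$, choose a free $R$-module $Q_s$ with a surjection $Q_s\twoheadrightarrow\pi_{-m}W_s$; since $W_s$ is coconnective and $Q_s$ is free, this surjection lifts to a map $Q_s[-m]\to W_s$, hence to a map $\LSym_R Q_s[-m]\to W_s$, and I set $W_{s+1}$ to be the pushout of $R\leftarrow\LSym_R Q_s[-m]\to W_s$. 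By Lemma~\ref{lem:ngood_induct} (applicable since $m\geq n$ and $Q_s$ is flat) $W_{s+1}$ is again $n$-good, hence again coconnective, with the same $\pi_i$ for $i\geq 1$ and for $-n+2\leq i\leq -1$. Taking colimits at limit stages and using that bounded-above Tor-amplitude is preserved by filtered colimits, the eventual colimit $W$ is $n$-good, which already gives (a), the vanishing of $\pi_i W$ for $i\geq 1$ and for $-n+2\leq i\leq -1$, and $\pi_0 W\cong R$ when $n\geq 2$.

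The substantive point is to order the killing so that $\pi_i W = 0$ for every $i\leq -n$. Writing $A_s = \fib(W_s\to W_{s+1})$, base changing the cofiber sequence $\mathfrak a\to\LSym_R Q_s[-m]\to R$ (with $\mathfrak a$ the augmentation ideal) along $-\otimes_{\LSym_R Q_s[-m]}W_s$ identifies $A_s\simeq\mathfrak a\otimes_{\LSym_R Q_s[-m]}W_s$. Over $\bQ$ the ideal $\mathfrak a$ carries a complete weight filtration whose graded pieces $\LSym^k(Q_s[-m])$, $k\geq 1$, are flat $R$-modules concentrated in degree $-km$; hence (using Lemma~\ref{lem:tramplitude2} to control Tor-amplitude over $\LSym_R Q_s[-m]$) $A_s$ is $(-m)$-coconnective, and the induced map $\pi_{-m}A_s\to\pi_{-m}W_s$ is surjective because $Q_s[-m]\to W_s$ factors through $A_s$ and recovers $Q_s\twoheadrightarrow\pi_{-m}W_s$. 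Consequently the transition map $\pi_{-m}W_s\to\pi_{-m}W_{s+1}$ is zero. Now arrange the sequence so that for every $i\geq n$ the choice $m=i$ is made cofinally often (for instance cycling diagonally through the degrees $n,n{+}1,n{+}2,\dots$). Then any class in $\pi_{-i}W_s$, for any $s$, is annihilated at the next stage at which degree $-i$ is killed; since $\pi_{-i}W\cong\colim_s\pi_{-i}W_s$, we get $\pi_{-i}W=0$ for all $i\geq n$, which is (b).

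For (c), put $G = \fib(R\to W)$. As $W$ is $n$-good, $G\in\D(R)_{\leq -n}$; as $\pi_j W = 0$ for $j\leq -n$, the long exact sequence also gives $\pi_j G = 0$ for $j\leq -n-1$, so $G\simeq\pi_{-n}G[-n]\simeq\pi_{-n+1}W[-n]$. The condition that $\pi_{-n+1}W[-n]$ have Tor-amplitude $\leq -n$ is precisely that $\pi_{-n+1}W$ be flat, proving (c), and when $n\geq 2$ we have already recorded $\pi_0 W\cong R$. When $n = 1$, $W$ has homotopy only in degree $0$, hence is discrete, and $G = (\pi_0 W/R)[-1]$ with $\pi_0 W/R$ flat; the sequence $0\to R\to W\to \pi_0 W/R\to 0$ then exhibits $W$ as a flat $R$-algebra, and tensoring it with a residue field $\kappa(\pfrak)$ and using $\Tor_1^R(\pi_0 W/R,\kappa(\pfrak)) = 0$ gives $\kappa(\pfrak)\hookrightarrow W\otimes_R\kappa(\pfrak)$, so $W\otimes_R\kappa(\pfrak)\neq 0$ for every prime $\pfrak$ and $R\to W$ is faithfully flat. (One can also dispose of the case $n\geq 2$ outright by taking $W = R$ with the augmentation $S\to\tau_{\geq 0}S\cong R$.)

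The main obstacle is the bookkeeping in the second paragraph: making rigorous that cofinal killing of each degree $\leq -n$ empties all homotopy in the colimit, \emph{despite} the fact that killing $\pi_{-m}W_s$ simultaneously enlarges $\pi_{-m+1}$ and perturbs the homotopy in degrees $< -m$. What makes it work is that one only needs each individual class to die at some finite later stage, which is automatic once its degree is revisited; so it is the cyclic order of the killing, and not any quantitative estimate on the intermediate $W_s$, that carries the argument.
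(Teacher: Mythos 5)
Your proof is correct and is essentially the paper's own argument, written out in full: the paper likewise iterates the pushouts of Lemma~\ref{lem:ngood_induct} along a sequential diagram in which every class in degree $\leq -n$ eventually dies, observes that the colimit stays $n$-good by exactness of filtered colimits, and then reads off (b), (c), the $n=1$ faithfully flat case, and $\pi_0W\iso R$ for $n\geq 2$ from the fiber of $R\to W$ exactly as you do; your bookkeeping with the fiber $\mathfrak{a}\otimes_{\LSym_RQ_s[-m]}W_s$ and the cofinal cycling through degrees is a faithful expansion of the paper's one-line description. One caveat: your closing parenthetical, that for $n\geq 2$ one could just take $W=R$ via ``the augmentation $S\to\tau_{\geq 0}S\iso R$,'' is wrong --- the truncation map goes $\tau_{\geq 0}S\to S$, not the other way, and producing an $R$-algebra map from a coconnective algebra with $\pi_0\iso R$ down to $R$ is precisely the nontrivial content that Lemma~\ref{lem:last_class} and the obstruction-theoretic part of Theorem~\ref{thm:noetherian} are designed to supply. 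Since that remark plays no role in your argument, the proof stands as written.
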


\begin{proof}
    Inductively kill classes in degrees $-n$ and lower using
    Lemma~\ref{lem:ngood_induct} to obtain a sequential diagram $S\rightarrow S'\rightarrow
    S''\rightarrow\cdots$
    where every class in degree at most $-n$ vanishes at some point. The colimit $W$ remains $n$-good
    as an $R$-algebra by exactness of filtered colimits. On the other hand, it is also
    $(-n+1)$-connective. Thus, the fiber of $I$ of $R\rightarrow W$ is $(-n)$-connective and has
    Tor-amplitude bounded above by $(-n)$. In other words, $I[n]$ is a flat $R$-module.
    If $n\geq 2$, we are done. If $n=1$, then there is an exact sequence $0\rightarrow R\rightarrow
    W\rightarrow I[1]\rightarrow 0$, which shows that $W$ is faithfully flat.
\end{proof}

\begin{definition}
    Say that an $n$-good map $R\rightarrow S$ is $n$-excellent if $S$ satisfies the conclusions 
    of Lemma~\ref{lem:in_the_limit}. The proof of the lemma shows that if $S$ is $n$-good over $R$
    and $(-n+1)$-connective, then it is $n$-excellent.
\end{definition}

\begin{lemma}\label{lem:last_class}
    Suppose that $\bQ\subseteq R$ is a commutative ring and that $S$ is an $n$-excellent
    $\bE_\infty$-$R$-algebra
    for some $n\geq 2$. If the extension class in $\Ext^n_R(\pi_{-n+1}S,R)$ defined by $S$
    vanishes, then any pushout $T$ defined by
    $$\xymatrix{
        \LSym_R\pi_{-n+1}S[-n+1]\ar[r]\ar[d]&S\ar[d]\\
        R\ar[r]&T,
    }$$
    where the implicit map $\pi_{-n+1}S\rightarrow\pi_{-n+1}S$ is the identity,
    is $n$-good over $R$.
\end{lemma}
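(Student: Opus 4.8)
The plan is to mimic the construction in the proof of Lemma~\ref{lem:in_the_limit}, but now using the vanishing of the top extension class to produce a retraction rather than merely connectivity. Concretely, I would set up the pushout square exactly as in the statement and analyze the fiber of $R\rightarrow T$ by taking horizontal and vertical fibers of the square
\[
\xymatrix{
    R\ar@{=}[r]\ar[d]&R\ar[d]\\
    \LSym_R\pi_{-n+1}S[-n+1]\ar[r]&S,
}
\]
in complete analogy with the diagram appearing in the proof of Lemma~\ref{lem:ngood_induct}. The point of $n$-excellence is that $S$ has homotopy concentrated in degrees $0$ and $-n+1$, with $\pi_0 S\cong R$ and $\pi_{-n+1}S$ flat over $R$; hence the map $\LSym_R\pi_{-n+1}S[-n+1]\rightarrow S$ picking out the identity on $\pi_{-n+1}$ is surjective on all homotopy groups, and its fiber is $(-n)$-coconnective.

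**First I would** identify the fiber $F_3$ of $\LSym_R\pi_{-n+1}S[-n+1]\rightarrow S$. Since both rings have $\pi_0=R$ and agree on $\pi_{-n+1}$, and since $\LSym_R Q[-n+1]$ for a flat module $Q$ has a controlled range of lower homotopy (for $n\ge 2$, the next nonvanishing contribution beyond degree $-n+1$ appears in degree $\le -n$ over $\bQ$), the fiber $F_3$ is $(-n)$-coconnective, and its top homotopy group $\pi_{-n}F_3$ receives a surjection from $\pi_{-n}\LSym_R\pi_{-n+1}S[-n+1]$. The extension class of $S$ in $\Ext^n_R(\pi_{-n+1}S,R)$ is precisely the obstruction measuring whether the composite $F_3\rightarrow\LSym_R\pi_{-n+1}S[-n+1]\rightarrow R$ can be trivialized compatibly; its vanishing is what lets us control $F_3$ enough to conclude. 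Then, as in Lemma~\ref{lem:ngood_induct}, the fiber $F_2$ of $R\rightarrow\LSym_R\pi_{-n+1}S[-n+1]$ has Tor-amplitude bounded above by $-n$ (the free algebra on a flat module in degree $-n+1$ has fiber over $R$ starting in degree $-n+1$ and, being a direct sum of flat modules, has Tor-amplitude bounded above by $-n$), so from the fiber sequence $F_1\rightarrow F_2\rightarrow F_3$ and the vanishing of the extension class one deduces that $F_1$ has Tor-amplitude bounded above by $-n-1$, hence $F_1[1]$ has Tor-amplitude bounded above by $-n$.

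**Then I would** transfer this bound along $\LSym_R\pi_{-n+1}S[-n+1]$ using Lemma~\ref{lem:tramplitude2}: the augmentation $\LSym_R\pi_{-n+1}S[-n+1]\rightarrow R$ has fiber of Tor-amplitude bounded above by $-n\le -1$, so $F_1[1]$, which is an $\LSym_R\pi_{-n+1}S[-n+1]$-module with Tor-amplitude bounded above by $-n$ over $R$, has Tor-amplitude bounded above by $-n$ over $\LSym_R\pi_{-n+1}S[-n+1]$. Base changing to $R$ via $\LSym_R\pi_{-n+1}S[-n+1]\rightarrow R$ then shows $F_1[1]\otimes_{\LSym_R\pi_{-n+1}S[-n+1]}R$ has Tor-amplitude bounded above by $-n$ over $R$; but this is exactly the fiber of $R\rightarrow T$, so $T$ is $n$-good. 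This is the same closing move as in Lemma~\ref{lem:ngood_induct}.

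**The main obstacle** I expect is the precise bookkeeping of how the vanishing extension class controls $F_3$: one must check that the class in $\Ext^n_R(\pi_{-n+1}S,R)$ really is the obstruction that, once killed, forces $\pi_{-n}F_1=0$ (equivalently, Tor-amplitude of $F_1$ bounded above by $-n-1$) rather than merely $(-n)$-coconnectivity. Over $\bQ$ this should be tractable because the homotopy of $\LSym_R\pi_{-n+1}S[-n+1]$ in the relevant degrees is computed by rational derived functors of symmetric powers and, in the window between degrees $-n+1$ and $-n$, contributes only the module $\pi_{-n+1}S$ itself (no divided power or Frobenius corrections appear rationally), so the only obstruction to splitting off the top cell is the single $\Ext^n$-class named in the hypothesis. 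Modulo that identification, every other step is a direct transcription of the Tor-amplitude manipulations already carried out in Lemmas~\ref{lem:tramplitude2}, \ref{lem:ngood_induct}, and~\ref{lem:in_the_limit}.
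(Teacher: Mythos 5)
Your skeleton is right and your closing move (bounding the fiber of $R\rightarrow T$ by transferring a Tor-amplitude bound on $\fib(\LSym_R\pi_{-n+1}S[-n+1]\rightarrow S)$ along Lemma~\ref{lem:tramplitude2} and base change) is exactly the paper's. But the heart of the lemma is precisely the step you defer to the end as ``the main obstacle,'' and your proposed way of handling it does not work as stated. First, you conflate coconnectivity with Tor-amplitude: ``$\pi_{-n}F_1=0$'' is \emph{not} equivalent to ``$F_1$ has Tor-amplitude bounded above by $-n-1$'' (a non-flat discrete module placed in degree $-n-1$ is $(-n-1)$-coconnective but has strictly larger Tor-amplitude), and it is the Tor-amplitude bound, not the vanishing of a homotopy group, that the argument needs. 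Second, the fiber-sequence bookkeeping imported from Lemma~\ref{lem:ngood_induct} cannot by itself give the improvement: here the generator sits in degree $-n+1$, i.e.\ $m=n-1<n$, so $F_2=\fib(R\rightarrow\LSym_R Q[-n+1])$ and $F_3=\fib(R\rightarrow S)$ both only have Tor-amplitude bounded above by $-n$, and the rotated sequence $F_3[-1]\rightarrow F_1\rightarrow F_2$ yields only $\max(-n-1,-n)=-n$ for $F_1$, hence only $(n-1)$-goodness of $T$. Some extra input beyond the three-by-three diagram is mandatory, and you never supply it.

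The missing idea is an explicit identification rather than an obstruction-theoretic vanishing. Since the map $Q[-n+1]\rightarrow S$ (with $Q=\pi_{-n+1}S$) induces the identity on $\pi_{-n+1}$ and $S$ is $n$-excellent, the induced map $R\oplus Q[-n+1]=\LSym^0\oplus\LSym^1\rightarrow S$ is an isomorphism on all homotopy groups, i.e.\ an equivalence of $R$-modules (this is where the vanishing of the class in $\Ext^n_R(Q,R)$ enters: it is what allows such a map, equivalently the splitting $S\simeq R\oplus Q[-n+1]$, to exist). Consequently $\fib(\LSym_R Q[-n+1]\rightarrow S)\simeq\oplus_{i\geq 2}\LSym^i(Q[-n+1])$. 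Rationally each $\LSym^i(Q[-n+1])$ is a retract of $Q^{\otimes i}[i(-n+1)]$ with $Q$ flat, so it has Tor-amplitude bounded above by $i(-n+1)\leq -2n+2\leq -n$ for $n\geq 2$, and the bound passes to the direct sum. Feeding this into your final step (Lemma~\ref{lem:tramplitude2} plus base change along $\LSym_R Q[-n+1]\rightarrow R$) gives the fiber of $R\rightarrow T$ Tor-amplitude bounded above by $-2n+2\leq -n$, which is the paper's proof; note it even yields the stronger bound $-2n+2$, whereas your bookkeeping was aiming only at $-n$.
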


\begin{proof}
    Let $Q=\pi_{-n+1}S$.
    The fiber of $\LSym_R Q[-n+1]\rightarrow S$ consists of $\oplus_{i\geq
    2}\LSym^i(Q[-n+1])$, which has Tor-amplitude bounded over $R$ above by $-2n+2$.
    Thus, the same is true for the fiber of $R\rightarrow T$ using Lemma~\ref{lem:tramplitude2}
    and base change. As $n\geq 2$, $-2n+2\leq -n$, so we are done.
\end{proof}

\begin{theorem}\label{thm:noetherian}
    If $R$ is a Noetherian $\bQ$-algebra of finite Krull dimension, then every faithfully
    flat map $R\rightarrow S$ is $\bE_\infty$-descendable.
\end{theorem}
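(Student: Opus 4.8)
The plan is to show directly that for a suitable $N$ the unit $R\rightarrow\Tot^N(S^\bullet)$ admits an $\bE_\infty$-$R$-algebra retraction; by Proposition~\ref{prop:equivalent} this is equivalent to $\bE_\infty$-descendability, and since $R$ is a $\bQ$-algebra the $\bE_\infty$- and $\DAlg$-versions of the lemmas below agree, so the free functors $\LSym_R$ appearing there may be read as $\bE_\infty$-free functors. Write $d=\dim R$. First I would note that, being faithfully flat, $R\rightarrow S$ is $1$-good (its fiber is $(S/R)[-1]$ with $S/R$ flat), hence $\fib(R\rightarrow\Tot^N(S^\bullet))\simeq I^{\otimes(N+1)}$ --- using \cite[Prop.~2.14]{MNN17}, with $I=\fib(R\rightarrow S)$ --- has Tor-amplitude bounded above by $-(N+1)$, i.e. $R\rightarrow\Tot^N(S^\bullet)$ is $(N+1)$-good. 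Applying Lemma~\ref{lem:in_the_limit} to this map I obtain an extension $\Tot^N(S^\bullet)\rightarrow W_0$ with $W_0$ an $(N+1)$-excellent $\bE_\infty$-$R$-algebra, and I fix $N=d+2$, so that $m_0:=N+1=d+3$ satisfies $m_0\geq 3$ and $m_0>d$.

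The heart of the proof is an iteration that raises the connectivity. Suppose $W_k$ is an $m_k$-excellent $\bE_\infty$-$R$-algebra with $m_k>d$; then its only nonzero homotopy groups are $\pi_0W_k\iso R$ and $Q_k:=\pi_{-m_k+1}W_k$, and $Q_k$ is flat over $R$. Here I would invoke the fact that over a Noetherian ring of Krull dimension $d$ every flat module has projective dimension at most $d$ (Gruson--Raynaud; this is also the homological input to Iyengar's descendability theorem, cf.\ \cite[Thm.~3.13]{mathew-examples}): since $m_k>d$, it gives $\Ext^{m_k}_R(Q_k,R)=0$, so the extension class of $W_k$ vanishes and Lemma~\ref{lem:last_class} applies. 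It produces an $\bE_\infty$-$R$-algebra map $W_k\rightarrow T_k$, where $T_k$ is the pushout that kills $Q_k$, with $R\rightarrow T_k$ being $(2m_k-2)$-good. The algebra $T_k$ need no longer be bounded below, so I would re-enter the excellent regime: applying Lemma~\ref{lem:in_the_limit} to $R\rightarrow T_k$ gives an extension $T_k\rightarrow W_{k+1}$ with $W_{k+1}$ a $(2m_k-2)$-excellent $\bE_\infty$-$R$-algebra, and I set $m_{k+1}:=2m_k-2$. Since $m_0\geq 3$ one has $m_{k+1}-m_k=m_k-2\geq 1$, so the sequence $(m_k)$ is strictly increasing, stays $>d$, and tends to $\infty$.

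Finally I would pass to the colimit $W_\infty:=\colim(W_0\rightarrow T_0\rightarrow W_1\rightarrow T_1\rightarrow\cdots)$ in $\CAlg(\D(R))$. Filtered colimits of $\bE_\infty$-$R$-algebras are computed on underlying modules; all the $W_k$ and $T_k$ are coconnective with $\pi_0\iso R$ and every transition map is the identity on $\pi_0$, so $\pi_0W_\infty\iso R$ and $\pi_iW_\infty=0$ for $i>0$; and for each fixed $i<0$ the nonzero negative homotopy of $W_k$ (resp.\ $T_k$) lies in degree $-m_k+1$ (resp.\ in degrees $\leq -(2m_k-2)$), hence below $i$ once $m_k$ is large, so $\pi_iW_\infty=0$. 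Therefore $W_\infty\simeq R$, and the composite $\Tot^N(S^\bullet)\rightarrow W_0\rightarrow W_\infty\iso R$ is an $\bE_\infty$-$R$-algebra retraction of $R\rightarrow\Tot^N(S^\bullet)$; by Proposition~\ref{prop:equivalent} this proves $R\rightarrow S$ is $\bE_\infty$-descendable.

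The step I expect to be the main obstacle is the bookkeeping of the ``good''/``excellent'' indices across the two-move iteration: one must check that killing the bottom homotopy group strictly increases $m_k$ (which is why $N$ must be taken at least $d+2$ rather than $d+1$), that after this move one genuinely has to pass back through Lemma~\ref{lem:in_the_limit} before the obstruction-theoretic step can be reused, and that the colimit retains no negative homotopy. Everything else is formal once one grants the projective-dimension bound for flat modules, which is exactly what makes the obstruction classes in $\Ext^{m_k}_R(Q_k,R)$ vanish and is the only point at which finite Krull dimension is used.
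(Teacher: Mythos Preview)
Your argument is correct and follows exactly the paper's strategy: pass to $\Tot^N(S^\bullet)$ to obtain a sufficiently good algebra, then alternate Lemmas~\ref{lem:in_the_limit} and~\ref{lem:last_class} to build a tower of $\bE_\infty$-$R$-algebras converging to $R$, invoking the Raynaud--Gruson bound on the projective dimension of flat modules to kill the extension classes. The only differences are bookkeeping: the paper takes $N=d$ with $d=\max(1,\dim R)$ and leaves the convergence of the resulting transfinite composition implicit, whereas you take $N=d+2$ and track the goodness index explicitly via the $(2m_k-2)$-bound visible in the \emph{proof} of Lemma~\ref{lem:last_class} (rather than the weaker $m_k$-bound in its statement).
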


\begin{proof}
    Let $d=\max(1,\dim R)\geq 1$.
    By~\cite[Cor.~3.2.7]{raynaud-gruson}, the projective dimension of any flat module
    $R$-module is at most $d$.
    Let $R\rightarrow S^\bullet$ be the \v{C}ech nerve of $R\rightarrow S$. Let
    $I=\fib(R\rightarrow S)$. Since $R\rightarrow S$ is faithfully flat, $I[1]$ is a flat
    $R$-module.
    Then, $R\rightarrow\Tot^{d} S^\bullet$ has fiber given by $I^{\otimes d+1}$. It is
    hence $(d+1)$-good. As $d+1\geq 2$,
    by alternating the constructions of Lemma~\ref{lem:in_the_limit} and Lemma~\ref{lem:last_class},
    and using that the extension class condition of the latter lemma always holds in this case by
    the projective dimension bound for flats, we
    obtain a transfinite composition of $\bE_\infty$-algebras over $\Tot^d S^\bullet$ whose colimit is $R$.
\end{proof}

The proof of Theorem~\ref{thm:noetherian} used the technique of flattening by blowing up of
Raynaud--Gruson. We can prove a weaker bound on the projective dimension of flat modules under
stronger hypotheses. We include a proof for the interested reader. It can be used to prove the
corollary under the hypothesis that $R$ is of finite type over a characteristic $0$ field.

\begin{lemma}\label{lem:finite_dimension}
Let $k$ be a field, and let $R$ be a finite type $k$-algebra of Krull dimension at most $d$. If $M$
    is a flat $R$-module, then the projective dimension of $M$ is at most $d+1$.
\end{lemma}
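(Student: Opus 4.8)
The plan is to compute the projective dimension locally and then induct on the Krull dimension by cutting with a hyperplane section. Since $R$ is Noetherian, $\mathrm{pd}_R M=\sup_{\mathfrak{m}}\mathrm{pd}_{R_{\mathfrak{m}}}M_{\mathfrak{m}}$ over maximal ideals $\mathfrak{m}$, and each $R_{\mathfrak{m}}$ is a Noetherian local ring essentially of finite type over $k$ with $\dim R_{\mathfrak{m}}\leq d$ over which $M_{\mathfrak{m}}$ is flat; so it suffices to treat the case that $(R,\mathfrak{m})$ is local, essentially of finite type over $k$, with $\dim R\leq d$, and I would induct on $d$. When $d=0$ the ring $R$ is Artinian local, and a flat module over an Artinian local ring is free: a residue-field basis of $M/\mathfrak{m}M$ lifts to a surjection $F\rightarrow M$ (onto by Nakayama, since $\mathfrak{m}$ is nilpotent), and if $K$ is its kernel then $0\rightarrow K\rightarrow F\rightarrow M\rightarrow 0$ stays exact after $-\otimes_R R/\mathfrak{m}$ by flatness of $M$, so $K=\mathfrak{m}K=\mathfrak{m}^nK=0$ for $n\gg 0$; hence $\mathrm{pd}_R M=0$.

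For the inductive step $d\geq 1$: by prime avoidance there is $f\in\mathfrak{m}$ lying in no minimal prime of $R$, so $\dim(R/fR)\leq d-1$; then $R/fR$ is local essentially of finite type over $k$, the module $M/fM$ is flat over it, and by the inductive hypothesis $\mathrm{pd}_{R/fR}(M/fM)\leq d$. As $M$ is flat, $\Tor^R_i(M,R/fR)=0$ for $i>0$, so reducing a free $R$-resolution of $M$ modulo $f$ yields a free $R/fR$-resolution of $M/fM$; consequently each syzygy $N_i$ of $M$ over $R$ is flat and reduces modulo $f$ to the $i$-th syzygy of $M/fM$ over $R/fR$, and for $i\geq 1$ the module $N_i$ is a submodule of a free $R$-module. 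In particular the $d$-th syzygy $N:=N_d$ is a flat $R$-module contained in a free $R$-module such that $N/fN$ is projective, hence free, over the local ring $R/fR$, and it is enough to show $\mathrm{pd}_R N\leq 1$, for then $\mathrm{pd}_R M\leq d+\mathrm{pd}_R N\leq d+1$ (in fact the argument below will give $N$ free, and hence the optimal $\mathrm{pd}_R M\leq d$).

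The remaining step is the heart of the matter. Lift a basis of the free $R/fR$-module $N/fN$ to elements of $N$, obtaining a map $\psi\colon G\rightarrow N$ from a free $R$-module which is an isomorphism modulo $f$; the goal is to show $\psi$ is an isomorphism. Surjectivity is the obstacle: $\mathrm{coker}\,\psi$ is $f$-divisible, since $\psi$ being onto modulo $f$ gives $\mathrm{coker}\,\psi=f\cdot\mathrm{coker}\,\psi$, and the substantive input is that $\mathrm{coker}\,\psi$ is moreover $f$-adically separated, hence zero. Once $\psi$ is onto, $N\cong G/\ker\psi$ with $N$ flat forces $\ker\psi/f\ker\psi=0$, and since $\ker\psi\subseteq G$ is $f$-adically separated by the Krull intersection theorem, $\ker\psi=0$, so $\psi$ is an isomorphism and $N$ is free. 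The separatedness of $\mathrm{coker}\,\psi$ is a Mittag--Leffler-type assertion about the non-finitely-generated flat module $N$, in the spirit of the criteria of Raynaud--Gruson; establishing it is the main difficulty, and it is the elementary analogue of the flattening-by-blowup input underlying the sharp bound $\mathrm{pd}_R M\leq\dim R$ proved in loc.\ cit.
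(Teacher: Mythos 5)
Your first reduction is a genuine gap: the asserted equality $\mathrm{pd}_R M=\sup_{\mathfrak{m}}\mathrm{pd}_{R_{\mathfrak{m}}}M_{\mathfrak{m}}$ is false for modules that are not finitely generated, and the direction you need (global bounded by local) is exactly the one that fails, because $\Ext$ out of a non-finitely-presented module does not commute with localization (only $\Tor$, i.e.\ flat dimension, localizes). Concretely, in the very setting of the lemma take $R=k[x]$ and let $M\subseteq k(x)$ be the $R$-submodule generated by $f^{-1}$ for $f$ running over the monic irreducible polynomials. Then $M$ is flat, and for every maximal ideal $(g)$ one has $M_{(g)}=g^{-1}R_{(g)}\iso R_{(g)}$, so all local projective dimensions vanish; but $M$ is a rank-one torsion-free module that is not finitely generated, hence not free, hence not projective, so $\mathrm{pd}_R M=1$. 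Since the rest of your argument (lifting a basis of $N/fN$, Nakayama, Krull intersection) genuinely uses that the base is local, this step cannot be repaired by rewording.

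The second gap is the one you flag yourself: the $f$-adic separatedness of $\mathrm{coker}\,\psi$ is asserted, not proved, and it is the entire content. Divisibility of the cokernel alone gives nothing: over a discrete valuation ring $R$ with uniformizer $f$, the flat module $N=\mathrm{Frac}(R)$ has $N/fN=0$ free, and the corresponding cokernel is $f$-divisible and nonzero, so some Mittag--Leffler/Raynaud--Gruson-type input about the non-finitely-generated flat module $N$ is indispensable; as you note, supplying it would essentially reprove the sharp bound $\mathrm{pd}_R M\leq \dim R$, which is precisely the external input this lemma exists to avoid citing. So as written the proposal reduces the lemma to a statement at least as hard as the theorem it is meant to replace. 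The paper's proof proceeds quite differently and never tries to show any module is free: it inducts on $d$ by a d\'evissage of the test module $N$ in $\Ext^*_R(M,N)$, reducing to $R$ reduced, choosing a nonzerodivisor $f$ with $R[f^{-1}]$ regular (generic regularity over a field), and comparing $N$ with its derived $f$-adic completion; the generic part is handled by regularity of $R[f^{-1}]$, the complete part by the inductive hypothesis over $R/fR$ applied to $M\otimes_R R/fR$, and the passage to the limit over $m$ costs the extra $+1$ in the bound. Only $\Ext$-vanishing is ever needed there, which is why that argument closes where yours does not.
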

\begin{proof}
We argue by induction on $d$. The case $d < 0$ (i.e, $R = 0$) is clear, so assume from now on that $d \geq 0$ and that the lemma is known for $d-1$.

Let $N$ be a discrete $R$-module. We need to show that $\Hom_R(M, N)$ is $(-d-1)$-connective. The module $N$ admits a finite filtration whose successive quotients $N_j$ are modules over the reduction $R_{\text{red}}$ of $R$. It suffices to show that $\Hom_R(M, N_j)$ is $(-d-1)$-connective for all $j$. Replacing $N$ by $N_j$ we may assume that $N$ is an $R_{\text{red}}$-module. We then have 
\[
\Hom_R(M, N) = \Hom_{R_{\text{red}}}(M \otimes_R R_{\text{red}}, N).
\]
Replacing $R$ by $R_{\text{red}}$ we may now reduce to the case when $R$ is reduced.

Let $f$ be a nonzerodivisor of $R$ with the property that $R[f^{-1}]$ is regular over $k$, and denote by $N^\wedge = \lim (N \otimes_R R/f^mR)$ the (derived) $f$-adic completion of $N$. We have an exact sequence
\[
\Hom_R(M, F) \rightarrow \Hom_R(M, N) \rightarrow \Hom_R(M, N^\wedge)
\]
where $F$ is the fiber of the map $N \rightarrow N^\wedge$.  Note that $F$ is a $(-1)$-connective $R[f^{-1}]$-module.  The first term in the above sequence is equal to  $\Hom_{R[f^{-1}]}(M \otimes_R R[f^{-1}], F )$, so it is $(-d-1)$-connective virtue of the fact that $R[f^{-1}]$ is a regular Noetherian ring of dimension at most $d$. To prove the lemma it now suffices to show that 
\[
\Hom_R(M, N^\wedge) = \lim \Hom_R(M, N \otimes_R R/f^mR)
\]
is $(-d-1)$-connective. To see this it is enough to show that $\Hom_R(M, N \otimes_R R/f^mR)$ is $(-d)$-connective for all $m \geq 0$. This $R$-module admits a finite filtration whose successive quotients are all equal to 
\[
\Hom_R(M, N \otimes_R R/fR) = \Hom_{R/fR}(M \otimes_R R/fR, N \otimes_R R/fR).
\]
We finish by observing that the above is $(-d)$-connective by virtue of our inductive hypothesis.
\end{proof}

\section{Some finite morphisms}\label{sec:finite}

Bhatt and Scholze prove in~\cite{bhatt-scholze-affine} that $h$-covers $X\rightarrow Y$ of
Noetherian schemes induce
descendable morphisms $\Oscr_Y\rightarrow f_*\Oscr_X$ in $\D_\qc(Y)$. We do not know if they
induce $\bE_\infty$-descendable morphisms. We show below that some interesting examples of
$h$-covers do induce $\bE_\infty$-descendable morphisms.

\begin{proposition}\label{prop:h}
    Let $g\colon R\rightarrow S$ be an integral extension of Noetherian commutative rings. If
    $\dim R\leq 1$, then $g$ is $\DAlg$-descendable. If $\dim R\leq 2$ and $g$ is
    birational, then $g$ is $\DAlg$-descendable.
\end{proposition}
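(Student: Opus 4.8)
The plan is to reduce to the case of a \emph{finite} extension and then to build an $R$-algebra retraction of $R\rightarrow\Tot^m(S^\bullet)$ for suitable $m$ by killing negative homotopy groups one at a time, exactly as in the proofs of Theorem~\ref{thm:boolean} and Theorem~\ref{thm:noetherian}. First I would observe that an integral extension of Noetherian rings is a filtered colimit of finite extensions, and that $\DAlg$-descendability is detected on $\Tot^m$ for a single $m$ (Proposition~\ref{prop:equivalent}), so by a standard compactness/approximation argument it suffices to treat the case where $g\colon R\rightarrow S$ is module-finite. Since $R$ is Noetherian, $S$ is then a finitely generated $R$-module, so the fiber $I=\fib(R\rightarrow S)$ and all the tensor powers $I^{\otimes n}\simeq\fib(R\rightarrow\Tot^{n-1}(S^\bullet))$ are perfect $R$-complexes with bounded, finitely generated homology. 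The key numerical input is a bound on how negative $\Tot^m(S^\bullet)$ can be: because $R\rightarrow S$ is injective with cokernel $Q=S/R$ a finitely generated $R$-module, $I\simeq Q[-1]$ is concentrated in degree $-1$ up to the $\mathrm{Tor}$-amplitude of $Q$, and one computes (as in the proof of Corollary~\ref{cor:boolean_descendable}) that $\fib(R\rightarrow\Tot^m(S^\bullet))=I^{\otimes m+1}$ has $\mathrm{Tor}$-amplitude controlled by $m+1$ times that of $Q$; the relevant point is that in low Krull dimension $Q$ has finite projective dimension bounded by $\dim R$, so $\Tot^m(S^\bullet)$ becomes arbitrarily coconnective and arbitrarily highly connective-below-zero as $m$ grows.

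Next I would run the obstruction-theoretic argument. For $m$ large, $T_m:=\Tot^m(S^\bullet)$ satisfies $R\xrightarrow{\sim}\pi_0 T_m$ (since the fiber is $(-1)$-coconnective), and $\pi_{-i}T_m=0$ for $i$ in a large initial range. Let $N=\pi_{-\ell}T_m$ be the first nonvanishing negative homotopy group; it is a finitely generated $R$-module. As in Lemma~\ref{lem:last_class}, forming the pushout of $T_m$ along the augmentation $\LSym_R N[-\ell]\rightarrow T_m\leftarrow R$ (equivalently, attaching a free derived commutative algebra cell to kill $N$) replaces $T_m$ by a derived commutative $R$-algebra with $\pi_{-\ell}$ annihilated, while keeping $\pi_0=R$ and not introducing homotopy in degrees strictly between $-\ell$ and $0$, \emph{provided} the relevant extension class in $\Ext^{\ell+1}_R(N,R)$ vanishes. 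This is where the hypothesis enters: when $\dim R\leq 1$, any finitely generated $R$-module has projective dimension $\leq 1$, so $\Ext^{\geq 2}_R(-,R)=0$ and all obstructions vanish. When $\dim R\leq 2$ and $g$ is birational, $Q=S/R$ is supported in dimension $\leq 1$ (birationality forces $S[1/f]=R[1/f]$ for a non-zerodivisor $f$), hence has projective dimension $\leq 1$ by Auslander--Buchsbaum applied componentwise, which again kills the obstructions after taking $m$ large enough that the negative homotopy groups of $T_m$ are supported in dimension $\leq 1$ and lie in high enough degree. Iterating transfinitely (kill $\pi_{-\ell}$, then the new $\pi_{-\ell-1}$, etc., taking a filtered colimit) produces a derived commutative $R$-algebra $T_\infty$ with a map from $T_m$ and with $T_\infty\simeq R$, i.e.\ an $R$-algebra retraction $T_m\rightarrow R$. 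By Proposition~\ref{prop:equivalent} (derived version), $g$ is $\DAlg$-descendable.

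The main obstacle I anticipate is the bookkeeping in the second case: verifying that after passing to $\Tot^m$ the negative homotopy groups are not merely finitely generated but actually supported in dimension $\leq 1$ (so that their projective dimension is $\leq 1$ and $\Ext^{\geq 2}$ against $R$ vanishes), and that this persists through each cell attachment. For $\dim R\leq 1$ this is automatic; for $\dim R\leq 2$ birational it should follow from the fact that $Q=S/R$ is a torsion $R$-module supported on a closed subscheme of dimension $\leq 1$ together with the observation that $\mathrm{Tor}$-powers and the fibers of the cell-attachment maps ($\bigoplus_{i\geq 2}\LSym^i(N[-\ell])$ and tensor products $I^{\otimes k}$) remain supported there. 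A secondary subtlety, already present in Theorem~\ref{thm:boolean}, is the usual caveat about assembling the transfinite tower of derived commutative $R$-algebras into an honest functor out of an ordinal; this is handled by the same spine-is-inner-anodyne remark used there, or simply by noting that only countably many steps in each ``round'' are needed and that filtered colimits of $\DAlg$-descendable-type data behave well.
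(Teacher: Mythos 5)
Your strategy (obstruction theory: kill negative homotopy of $\Tot^m(S^\bullet)$ by attaching $\LSym$-cells) is not the paper's route, and as written it has fatal gaps. The central one is homological: over a Noetherian ring of Krull dimension $\leq 1$ it is simply false that finitely generated modules have projective dimension $\leq 1$ — that requires $R$ regular. For $R=k[[t^3,t^4,t^5]]\subset S=k[[t]]$ (an instance of the proposition with $\dim R=1$), $Q=S/R$ has infinite projective dimension and, since $R$ is not Gorenstein, the groups $\Ext^{\geq 2}_R(-,R)$ you need to vanish do not vanish; your obstructions therefore have no reason to die. In the birational surface case your appeal to Auslander--Buchsbaum is inverted: a module supported in dimension $\leq 1$ over a $2$-dimensional ring has \emph{small depth and hence large (often infinite) projective dimension}, not projective dimension $\leq 1$. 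Relatedly, since $S$ is not flat over $R$, the derived powers $Q^{\otimes^{\mathbf{L}} m}$ have unbounded Tor-amplitude in general, so $\Tot^m(S^\bullet)$ does not become ``arbitrarily coconnective'' as $m$ grows. Two further problems: the cell-attachment control you invoke (Lemmas~\ref{lem:ngood_induct} and~\ref{lem:last_class}) is proved only for $\bQ$-algebras, because the decomposition and Tor-amplitude bounds for $\LSym^i$ of a shifted module are characteristic-zero facts, whereas Proposition~\ref{prop:h} carries no characteristic hypothesis (in characteristic $p$ a free cell attachment introduces homotopy in many intermediate degrees, which is exactly why the paper confines obstruction theory to char $0$ and the $p$-Boolean setting); and your opening reduction from integral to module-finite ``by compactness'' is not standard — retractions of $\Tot^{n_i}$ for the finite stages are neither compatible nor of uniformly bounded index, so descendability does not obviously pass to the filtered colimit.

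The paper's proof avoids all of this. After primary decomposition and (in dimension $1$) normalization, one reduces to a finite extension $R\subset S$ with $R[1/f]\iso S[1/f]$ and with $I=\fib(R\to S)$ an $R/f$-module. The heart is a derived conductor-square argument: an explicit element computation in $\pi_0$ shows that the projection $R/f\times_{S//f}S\to S$ factors through $R\subseteq S$, i.e.\ $R\to R/f\times_{S//f}S$ admits a retraction. Granting $\DAlg$-descendability of $R/f\to S/f$ — which in the $2$-dimensional birational case is exactly the $1$-dimensional case being proved simultaneously — criterion (iii) of Proposition~\ref{prop:equivalent} applies: the subcategory $\Cscr$ generated under finite limits and retracts by $S$-algebras contains $S//f$, $R/f$, the pullback $R/f\times_{S//f}S$, and finally $R$. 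So the dimension hypotheses enter through normalization and this induction, not through $\Ext$-vanishing; no homotopy-group-killing or projective-dimension bound is used at all.
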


\begin{proof}
    We can use primary decomposition to reduce to the case where $R$ is a domain and $S$ is a
    finite product of domains. Suppose that
    $(0)=\qfrak_1\cap\cdots\cap\qfrak_n$ is a primary decomposition of $R$ where each $\qfrak_i$ is
    $\pfrak_i$-primary for a prime ideal $\pfrak_i$. Let $\afrak=\qfrak_2\cap\cdots\cap\qfrak_n$.
    Then, since $\qfrak_1\cap\afrak=(0)$, $R$ is a pullback
    $$\xymatrix{
        R\ar[r]\ar[d]&R/\qfrak_1\ar[d]\\
        R/\afrak\ar[r]&R/(\qfrak_1,\afrak).
    }$$
    In particular, $R\rightarrow R/\qfrak_1\times R/\afrak$ is $\DAlg$-descendable by
    Corollary~\ref{cor:pullback}. Inductively, it follows that $R\rightarrow\prod_{i=1}^n
    R/\qfrak_i$ is $\DAlg$-descendable. Moreover, since the kernels of $R/\qfrak_i\rightarrow
    R/\pfrak_i$ are nilpotent, it follows that $R\rightarrow\prod_{i=1}^n R/\pfrak_i$ is
    $\DAlg$-descendable. Using Lemma~\ref{lem:two_three}, it suffices now to check that each
    $R/\pfrak_i\rightarrow S\otimes_R R/\pfrak_i$ is descendable (where we use the non-derived
    tensor product).

    Using the same kind of decomposition on $S\otimes_R R/\pfrak_k$, we can assume
    that $R$ is a domain and that $S$ is a product of finite $R$-algebras, each of which is a
    domain. By faithfulness and the going up theorem, we can further reduce to the case that $R\rightarrow S$ is a
    map of domains and that $S$ is a finite $R$-algebra.
    If $\dim R=0$, then $R\rightarrow S$ is faithfully flat, so the result follows by
    Theorem~\ref{thm:noetherian}. If $\dim R=1$, let $R'$ and $S'$ be the normalizations of
    $R$ and $S$ in their fields of fractions. Then, $R'\rightarrow S'$ is faithfully flat since
    $R'$ is a Dedekind domain. Using the commutative square
    $$\xymatrix{
        R\ar[r]\ar[d]&S\ar[d]\\
        R'\ar[r]&S'
    }$$ and Lemma~\ref{lem:two_three}, it is thus enough to show that $R\rightarrow R'$ is
    $\DAlg$-descendable. There is some function $f\in R$ such that $R[1/f]\iso R'[1/f]$.
    We assume that $f$ is chosen so that in fact the fiber $I$ of $R\rightarrow R'$ admits the
    structure of an $R/f$-module. This is possible because $R\rightarrow R'$ is injective so that $I\we R'/R[-1]$.

    From this point, the remainder of the proof in the $\dim R=1$ case is the same as the proof in
    the $\dim R=2$ case. That is, we assume that there is some $f\in R$ such that $R[1/f]\we
    S[1/f]$, that $I=\fib(R\rightarrow S)$ admits the structure of an $R/f$-module, and that
    $R/f\rightarrow S/f$ is $\DAlg$-descendable. (For surfaces, the hypothesis that
    $R/f\rightarrow S/f$ is $\DAlg$-descendable follows from the $1$-dimensional case we are establishing simultaneously.)

    We claim that the canonical map $R\rightarrow R/f\times_{S//f}S$ admits a retraction.
    To see this, note that
    the map $$\pi_0(R/f\times_{S//f}S)\rightarrow S$$ factors through the pullback
    $R/f\times_{S/f}S\rightarrow S$. An element of $R/f\times_{S/f}S$ consists of a pair
    $(\overline{r},s)$ where $\overline{r}\in R/f$, $s\in S$, and $g(\overline{r})\equiv s\pmod f$
    in $S/f$. If $r$ is a lift of $\overline{r}$ to $R$, we can rewrite the condition as saying
    that $g(r)\equiv s\pmod f$, so that there is some $t\in S$ such that $r=s+ft$. Our assumption
    that $S/R$ is an $R/f$-module implies that $ft\in R\subseteq S$. Thus, $s=r-ft\in R$.
    Therefore, the projection map $$R/f\times_{S//f}S\rightarrow S$$ factors through $R\subseteq S$.
    Now, since $R/f\rightarrow S/f$ is $\DAlg$-descendable, so is $R/f\rightarrow S//f$. It
    follows that the smallest full subcategory $\Cscr\subseteq\CAlg(\D(R))$ containing the objects which
    admit a map from $S$ and closed under finite limits and retracts contains $S$, $S//f$, $R/f$ (by
    $\DAlg$-descendability of $R/f\rightarrow S//f$), the pullback $R/f\times_{S//f}S$ by
    closure under finite limits, and finally $R$ by closure under retracts.
\end{proof}

\section{Tannaka duality}\label{sec:tannaka}

Throughout this section we work with prestacks (i.e, presheaves of anima) on the $\infty$-category of connective $\bE_\infty$-rings. A quasicompact geometric stack with affine diagonal is a prestack which satisfies fpqc descent, has affine diagonal, and admits a faithfully flat surjection from an affine scheme.

\begin{theorem}\label{thm:tannaka}
Let $X$ be a quasicompact geometric stack with affine diagonal, and assume that there exists an
    fpqc surjection $p\colon U \rightarrow X$ with $U$ affine such that the commutative algebra
    $p_*(\mathcal{O}_U)$ in $\QCoh(X)$ is $\bE_\infty$-descendable. Then, for every affine scheme
    $S$, the morphism of anima
\[
\Hom(S, X) \rightarrow \Hom_{\Pr^L_{\CAlg(\D(\bS))/}}( \CAlg(\QCoh(X)) , \CAlg(\QCoh(S)))
\]
given by sending $f$ to $f^*$ is an embedding whose image consists of those functors which preserve finite limits and connective objects.
\end{theorem}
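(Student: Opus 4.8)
The strategy is to bootstrap from the affine case via faithfully flat descent along $p\colon U\to X$, with $\bE_\infty$-descendability of $B:=p_*(\Oscr_U)$ supplying precisely the input needed to descend a statement that is not merely about quasi-coherent sheaves.

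First I would treat the affine case $X=\Spec A$, with $S=\Spec C$. Here the claim is that $\Map_{\CAlg}(A,C)\to\Hom_{\Pr^L_{\CAlg(\D(\bS))/}}(\CAlg(\D(A)),\CAlg(\D(C)))$ is an equivalence onto the functors preserving finite limits and connective objects. The structural point is that $\CAlg(\D(A))=\CAlg(\D(\bS))_{A/}$ is generated under colimits by the free algebras $\mathrm{Free}_A(A[k])$, all of which are base changed from $\bS$, so a colimit-preserving $\CAlg(\D(\bS))$-linear functor is determined by the induced $\bE_\infty$-ring map $A\to C$ --- extracted from its value on the fold map $A\otimes_\bS A\to A$ --- and one checks that such a functor agrees with the corresponding base-change functor exactly when it is left exact and preserves connective objects. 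This is the $\CAlg$-enhanced form of the affine Tannaka duality of Lurie, of Bhatt--Halpern-Leistner, and of Stefanich, and can alternatively be obtained from their $\QCoh$-statements by applying $\CAlg$ and using that the stabilization of augmented $A$-algebras recovers $\D(A)$.

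Next comes the descent step, which I expect to be the heart of the matter. Let $U^\bullet$ be the \v{C}ech nerve of $p$; affineness of the diagonal makes each $U^n$ affine, with $B^{\otimes\bullet+1}$ the associated cosimplicial commutative algebra in $\QCoh(X)$. On the target side, descent for $\QCoh$ --- which holds already by descendability of $B$ (or by fpqc descent) --- gives $\CAlg(\QCoh(X))\we\Tot_{[n]\in\Delta}\CAlg(\QCoh(U^n))$, and one argues that $\Hom_{\Pr^L_{\CAlg(\D(\bS))/}}(-,\CAlg(\QCoh(S)))$ converts this into $\Tot_{[n]}\Hom_{\Pr^L_{\CAlg(\D(\bS))/}}(\CAlg(\QCoh(U^n)),\CAlg(\QCoh(S)))$, using that the transition functors come from the colimit-preserving pushforwards along the affine maps $U^n\to U^m$. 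On the source side, the assignment $\mathcal{A}\mapsto\Hom(S,\Spec_X\mathcal{A})$ is a functor $\CAlg(\QCoh(X))\to\Sscr$ which does \emph{not} factor through $\QCoh(X)$; feeding the \v{C}ech pro-tower of $\Oscr_X\to B$ into it and invoking $\bE_\infty$-descendability yields a pro-equivalence $\{\Hom(S,X)\}\we\{\Hom(S,\Spec_X\Tot^n(B^{\otimes\bullet+1}))\}_n$ in $\Pro(\Sscr)$, which I would then reorganize into an identification $\Hom(S,X)\we\Tot_{[n]}\Hom(S,U^n)$ compatible with $f\mapsto f^*$. Once both sides are expressed as totalizations of their affine analogues, with the comparison map the totalization of the affine comparison maps, fully faithfulness of $f\mapsto f^*$ follows from the affine case, and the image is the totalization of the ``geometric'' sub-anima appearing there.

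For the image, one checks separately that every $f^*$ is left exact --- since each relevant functor is a left adjoint between stable $\infty$-categories, finite limits in $\CAlg$ are computed as finite colimits in $\QCoh$ and hence preserved --- and preserves connective objects, as $f^*$ on $\QCoh$ is right $t$-exact. Conversely a functor $F$ in the target with these two properties restricts, under the level-wise descent identification (compatible with the forgetful functors $\CAlg\to\QCoh$ and with the $t$-structures), to a compatible family of left exact, connective-preserving functors $\CAlg(\QCoh(U^n))\to\CAlg(\QCoh(S))$, hence by the affine case to a point of $\Tot_{[n]}\Hom(S,U^n)\we\Hom(S,X)$ mapping to $F$. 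The main obstacle is the source-side descent: there is no formal reason for $\Hom(S,-)$ to commute with the geometric realization $X=|U^\bullet|$, and plain descendability is genuinely insufficient --- it is exactly $\bE_\infty$-descendability, in the form ``an arbitrary functor out of $\CAlg(\QCoh(X))$ carries the \v{C}ech pro-tower to a pro-equivalence'', applied to the non-$\QCoh$-linear functor $\Hom(S,\Spec_X(-))$, that makes it go through. A secondary technical point is that the partial totalizations $\Spec_X\Tot^n(B^{\otimes\bullet+1})$ are not affine, so one must work pro-systemically and only pass to honest limits at the end, and check compatibility of this reorganization with the totalization on the target side.
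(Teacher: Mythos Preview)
Your descent bookkeeping has the roles reversed, and this is where the argument breaks.

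On the target side you write that from $\CAlg(\QCoh(X))\simeq\Tot_{[n]}\CAlg(\QCoh(U^n))$ one can conclude
\[
\Hom_{\Pr^L_{\CAlg(\D(\bS))/}}\bigl(\CAlg(\QCoh(X)),\CAlg(\QCoh(S))\bigr)\;\simeq\;\Tot_{[n]}\Hom_{\Pr^L_{\CAlg(\D(\bS))/}}\bigl(\CAlg(\QCoh(U^n)),\CAlg(\QCoh(S))\bigr),
\]
``using that the transition functors come from colimit-preserving pushforwards.'' But $\Hom_{\Pr^L}(-,\Dscr)$ turns \emph{colimits} into limits, and the totalization here is a limit; rewriting it as a colimit along pushforwards does not land you in $\Pr^L_{\CAlg(\D(\bS))/}$ (pushforwards are not maps \emph{under} $\CAlg(\D(\bS))$). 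In fact the displayed equivalence is false as stated: take $X=BG$ for a finite group over a field $k$ of characteristic $0$ and $S=\Spec k$; the functor $q_!\colon\CAlg(\QCoh(BG))\to\CAlg(\D(k))$, left adjoint to pullback along $q\colon BG\to\Spec k$, lies in the left-hand side but not in the right-hand side (which by your affine case would be $\Hom(\Spec k,BG)$). So the step genuinely fails without the finite-limit hypothesis, and your justification does not use that hypothesis.

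This is exactly where $\bE_\infty$-descendability enters in the paper, and not where you placed it. The paper organizes both sides as prestacks in the test scheme $S$, observes that $S\mapsto\Hom^{\mathrm{lex}}_{\Pr^L_{\CAlg(\D(\bS))/}}(\CAlg(\QCoh(X)),\CAlg(\QCoh(S)))$ is a sheaf for the (even ordinary) descendable topology simply because $\Hom(C,-)$ preserves limits in the second variable, and then shows that the map induced by $p$ is an $\bE_\infty$-descendable cover \emph{of this prestack}: given any left-exact $F$, the base change over $S$ is $\Spec F(B)$, and $F$ preserves $\bE_\infty$-descendable algebras because it preserves finite limits and coproducts. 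That is the mechanism by which the lex condition and the $\bE_\infty$-hypothesis cooperate to produce target-side descent.

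Your source-side manoeuvre, applying $\bE_\infty$-descendability to $\mathcal{A}\mapsto\Hom(S,\Spec_X\mathcal{A})$, is murky (the partial totalizations $\Spec_X\Tot^n(B^{\otimes\bullet+1})$ are not the $U^n$, and your ``reorganization'' is left unspecified), and in any case unnecessary: the paper simply invokes the known $\QCoh$-level Tannaka duality $X\simeq\Spec^{\cn}(\QCoh(X))$ and then compares $\Spec(\QCoh(X))$ with $\Spec(\CAlg(\QCoh(X)))$ via the argument above, finishing with a square-zero trick ($F(\Oscr_X\oplus M)\simeq\Oscr_S\oplus F(M)$) to pass from connective algebras to connective sheaves.
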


\begin{example}\label{ex:scheme}
Let $X$ be a quasicompact semiseparated scheme. Then $X$ satisfies the hypothesis of Theorem~\ref{thm:tannaka}. Indeed, if $\lbrace U_\alpha \rbrace$ is a finite affine open cover of $X$ then setting $U = \coprod U_\alpha$ we find that $p_* \mathcal{O}_U$ is $\bE_\infty$-descendable, by a variant of Corollary~\ref{cor:zariski}.
\end{example}

\begin{example}\label{ex:BG}
Let $X= BG$ be the classifying stack of an affine algebraic group over a field $k$ of characteristic zero.
    Then $X$ satisfies the hypothesis of Theorem~\ref{thm:tannaka}, by taking $p$ to be the
    projection $U = \Spec k \rightarrow X$. The fact that $p_* \mathcal{O}_U$ is $\bE_\infty$-descendable follows from an adaptation of the arguments from Section~\ref{sec:noetherian}. More generally, Theorem~\ref{thm:tannaka} applies whenever $X$ is a quasicompact geometric stack with affine diagonal over $\bQ$ satisfying the following two conditions:
\begin{enumerate}
\item[{(i)}] every truncated connective quasicoherent sheaf on $X$ receives a surjection from a flat sheaf;
\item[{(ii)}] there exists an integer $d$ such that $\Ext^n(\mathcal{F}, \mathcal{O}_X) = 0$ for every $n \geq d$ and every flat sheaf $\mathcal{F}$ on $X$.
\end{enumerate}
We note that condition (i) is implied by the resolution property, while condition (ii) holds in the case when $X$ is an Artin stack and of finite type over a field, see~\cite[Prop.~8.3.2]{gaitsgory-shvcat}.
\end{example}

\begin{example}
    Let $q\colon X' \rightarrow X$ be a quasicompact semiseparated schematic morphism between geometric
    stacks, and assume that $X$ satisfies the hypothesis of Theorem~\ref{thm:tannaka}. Then the
    theorem also applies to $X'$. To see this, let $p\colon U \rightarrow X$ be an fpqc surjection from
    an affine scheme such that $p_*(\mathcal{O}_U)$ is $\bE_\infty$-descendable, and let $p'\colon U'
    \rightarrow X'$ be its base change to $X'$, so that $U'$ is a quasicompact semiseparated scheme. Let
    $p''\colon V \rightarrow U'$ be an fpqc surjection from an affine scheme such that
    $p''_*(\mathcal{O}_V)$ is $\bE_\infty$-descendable (which exists by Example~\ref{ex:scheme}).
    Then $(p' \circ p'')_* (\mathcal{O}_V)$ is $\bE_\infty$-descendable by a version of
    Lemma~\ref{lem:two_three}.

    In particular, combining with Example~\ref{ex:BG} we deduce that quotients of quasicompact
    semiseparated schemes by actions of affine algebraic groups in characteristic zero satisfy the
    conditions of Theorem~\ref{thm:tannaka}.
\end{example}

\begin{remark}
Our proof of Theorem~\ref{thm:tannaka} depends on the usual Tannaka duality results concerning symmetric monoidal functors between $\infty$-categories of quasicoherent sheaves. Extending this proof to the context of derived algebraic geometry (that is, geometry built on spectra of connective derived commutative rings) would require proving a version of Tannaka duality in the derived context, which is beyond the aims of this paper. 

Nevertheless, we can still prove a weakened version of Theorem~\ref{thm:tannaka} in the derived
    setting: the assignment $f \mapsto f^*$ is an embedding, and its image consists of those
    functors which preserve finite limits, connective objects, and flat objects. This follows by a
    variant of the argument that proves Theorem~\ref{thm:tannaka}. More precisely, the assignment
    $f \mapsto f^*$ is first encoded as a map of prestacks $\mu_X : X \rightarrow
    \Spec^\cn(\DAlg(X))$, which is shown to be an embedding using a variant of
    Lemmas~\ref{lem:affine} and~\ref{lem:pullback} below. One then argues, by a variant of the
    arguments in Lemma~\ref{lem:nuX}, that $\mu_X$ is an fpqc surjection onto the subobject of
    $\Spec^\cn(\DAlg(X))$ determined by the preservation of flatness condition.
\end{remark}

\begin{remark}
    As we shall see below in Lemma~\ref{lem:affine}, in the case when $X$ is an affine scheme the
    conclusion of Theorem~\ref{thm:tannaka} holds without requiring finite limit preservation. This
    condition is however necessary for more general classes of stacks.

    For instance, let $G$ be a finite group and let $X=BG$ be the classifying stack of $G$ over a
    characteristic zero field $k$ (so that in particular the conditions of
    Theorem~\ref{thm:tannaka} are satisfied, by Example~\ref{ex:BG}). Let $q\colon BG \rightarrow
    \Spec k$ be the projection. Then the pullback functor $q^*\colon \CAlg(\Spec k) \rightarrow
    \CAlg(\QCoh(BG))$ admits a left adjoint $q_!\colon \CAlg(\QCoh(BG)) \rightarrow
    \CAlg(\QCoh(\Spec k))$. Note that $q^*$ is fully faithful (since already the pullback functor
    $\QCoh(\Spec k) \rightarrow \QCoh(BG)$ is) and consequently $q_!$ provides a functor in
    \[
    \Hom_{\Pr^L_{\CAlg(\D(\bS))/}}( \CAlg(\QCoh(BG)) , \CAlg(\QCoh(\Spec k))).
    \]
    Note that $q_!$ is equivalent to the colimit functor 
    \[
    \CAlg(\QCoh(\Spec k))^{BG} \rightarrow \CAlg(\QCoh(\Spec k))
    \]
    and in particular it sends connective algebras to connective algebras. However $q$ does not
    arise from a map $\Spec k \rightarrow BG$. To see this, consider the projection $p\colon \Spec k
    \rightarrow BG$. Then $q_!(p_*k)$ is the algebra of functions on the fixed points of the action
    of $G$ on $\Spec(p^*p_*(k)) = G$, and vanishes since $G$ acts freely on itself. However the
    pullback of $p_*k$ along any map $\Spec k\rightarrow BG$ does not vanish.
\end{remark}

The rest of this section will be devoted to the proof of Theorem~\ref{thm:tannaka}. In what follows
we will also make use of nonconnective prestacks, i.e., presheaves of anima on the $\infty$-category of $\bE_\infty$-rings.

\begin{notation}
Let $X$ be a prestack. We denote by $\Spec(\QCoh(X))$ the nonconnective prestack which sends each nonconnective affine scheme $S$ to 
\[
\Hom_{\CAlg(\Pr^L)}( \QCoh(X),\QCoh(S)).
\]
Similarly, we denote by $\Spec(\CAlg(\QCoh(X)))$ the nonconnective prestack that sends each nonconnective affine scheme $S$ to the subanima of
\[
\Hom_{\Pr^L_{\CAlg(\D(\bS))/}}( \CAlg(\QCoh(X)) , \CAlg(\QCoh(S)))
\]
on those functors which which preserve finite limits. We denote by
\[
\Spec(\QCoh(X)) \xrightarrow{\nu_X} \Spec(\CAlg(\QCoh(X)))
\]
the canonical map obtained by taking commutative algebra objects.
\end{notation}

\begin{lemma}\label{lem:affine}
Let $A$ and $B$ be $\bE_\infty$-rings. Then the morphism of anima
\[
\Hom(A, B) \rightarrow \Hom_{\Pr^L_{\CAlg(\D(\bS))/}}( \CAlg(\D(A)) , \CAlg(\D(B)))
\]
given by sending an $A$-algebra $C$ to $B \otimes_A C$, is an equivalence.
\end{lemma}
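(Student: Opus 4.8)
The plan is to deduce the lemma from the following purely formal statement: for any presentable $\infty$-category $\Escr$ and objects $X,Y\in\Escr$, the assignment $X\mapsto\Escr_{X/}$, regarded as an object of $\Pr^L_{\Escr/}$ via the colimit-preserving structure functor $L_X\colon\Escr\to\Escr_{X/}$, $D\mapsto X\sqcup D$, is fully faithful; concretely,
\[
\Hom_{\Pr^L_{\Escr/}}(\Escr_{X/},\Escr_{Y/})\;\simeq\;\Map_\Escr(X,Y),
\]
naturally in $X$ and $Y$. To obtain the lemma from this, write $\Escr=\CAlg(\D(\bS))$ and use the standard identifications $\CAlg(\D(A))\simeq\Escr_{A/}$ and $\CAlg(\D(B))\simeq\Escr_{B/}$, under which the structure functors exhibiting these as objects of $\Pr^L_{\Escr/}$ become the base change functors $L_A\colon D\mapsto A\otimes_\bS D\simeq A\sqcup D$ and $L_B$, left adjoint to the forgetful functors $U_A\colon\Escr_{A/}\to\Escr$ and $U_B$. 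For $\phi\colon A\to B$ the functor $C\mapsto B\otimes_A C\simeq C\sqcup_A B$ is colimit-preserving and satisfies $(-\sqcup_A B)\circ L_A\simeq L_B$, hence is a morphism in $\Pr^L_{\Escr/}$ whose right adjoint is restriction of scalars $\phi^*\colon\Escr_{B/}\to\Escr_{A/}$; one checks this is exactly the comparison map produced by the formal statement.

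First I would prove the formal statement, and the key move is to pass to right adjoints, where everything becomes ``over $\Escr$''. A morphism $F$ in $\Pr^L_{\Escr/}$ from $\Escr_{X/}$ to $\Escr_{Y/}$ is a colimit-preserving functor with $FL_X\simeq L_Y$; it admits a right adjoint $F^R$, and taking right adjoints of $FL_X\simeq L_Y$ yields $U_X F^R\simeq U_Y$, so that $F^R\colon\Escr_{Y/}\to\Escr_{X/}$ is a functor over $\Escr$. Now the forgetful maps $U_X$ and $U_Y$ are left fibrations, classified under straightening by the functors corepresented by $X$ and $Y$, namely $\Map_\Escr(X,-)$ and $\Map_\Escr(Y,-)$; since every edge of a left fibration is cocartesian, $F^R$ is automatically a morphism of left fibrations over $\Escr$, hence corresponds to a natural transformation $\Map_\Escr(Y,-)\Rightarrow\Map_\Escr(X,-)$, which by the Yoneda lemma is classified by a point $\phi_F\in\Map_\Escr(X,Y)$. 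In the reverse direction $\phi\colon X\to Y$ is sent to $(-\sqcup_X Y)$, equivalently to its right adjoint $\phi^*$, and $\phi^*$ is precisely the functor over $\Escr$ that straightens to the natural transformation ``precompose with $\phi$''. To finish one checks these assignments are mutually inverse: the composite starting from $\phi$ returns that natural transformation, which Yoneda sends back to $\phi$; and the composite starting from $F$ returns $\phi_F$, for which $\phi_F^*$ straightens to the same natural transformation as $F^R$, so $F^R\simeq\phi_F^*$ and therefore $F\simeq(\phi_F^*)^L\simeq(-\sqcup_X Y)$. Since taking right adjoints, straightening, and the Yoneda lemma are all equivalences of $\infty$-categories (hence of the relevant mapping anima), the comparison is an equivalence of anima, not merely a bijection on $\pi_0$.

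The main obstacle is really just this adjunction bookkeeping, and in particular the realization that one should not argue with $F$ itself: $F$ is \emph{not} a functor over $\Escr$, as already the case $F=(-\sqcup_X Y)$ shows ($U_Y F\not\simeq U_X$), whereas its right adjoint $F^R$ is, after which the problem collapses to the straightening equivalence $\mathrm{LFib}(\Escr)\simeq\Fun(\Escr,\Spc)$ and the Yoneda lemma \cite{htt}. Everything else — existence of right adjoints from the adjoint functor theorem, the identification of $\CAlg(\D(A))$ with $\Escr_{A/}$ and of the structure functors thereunder, and the naturality needed so that the final comparison is an equivalence of anima — is routine.
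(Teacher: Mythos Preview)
Your argument is correct and follows essentially the same route as the paper's proof: pass to right adjoints to replace the under-category $\Pr^L_{\Escr/}$ by the over-category $\Pr^R_{/\Escr}$, and then invoke that the forgetful functors $\Escr_{X/}\to\Escr$ are left fibrations corepresented by $X$, so that straightening plus Yoneda finishes. The paper states exactly this in two sentences; your version spells out the same mechanism in greater detail and in the generality of an arbitrary presentable $\Escr$, which is harmless and arguably clarifying.
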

\begin{proof}
Passing to right adjoints, we may identify the right hand side with
\[
\Hom_{\Pr^R_{/\CAlg(\D(\bS))}}( \CAlg(\D(B)) , \CAlg(\D(A))).
\]
The desired assertion follows from the fact that the projections 
\[
\CAlg(\D(A)) \rightarrow \CAlg(\D(\bS)) \leftarrow \CAlg(\D(B))
\]
are left fibrations corepresented by $A$ and $B$.
\end{proof}

\begin{lemma}\label{lem:affine2}
If $X$ is an affine scheme, then $\nu_X$ is an isomorphism.
\end{lemma}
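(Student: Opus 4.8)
The plan is to evaluate both nonconnective prestacks at an arbitrary affine scheme and reduce the statement to Lemma~\ref{lem:affine}. Write $X=\Spec A$ and let $S=\Spec B$ be a nonconnective affine scheme, so that $\QCoh(X)=\D(A)$ and $\QCoh(S)=\D(B)$. Unwinding the definitions, the map $\nu_X$ evaluated at $S$ is the morphism
\[
\nu_X(S)\colon \Hom_{\CAlg(\Pr^L)}(\D(A),\D(B)) \longrightarrow \Hom_{\Pr^L_{\CAlg(\D(\bS))/}}(\CAlg(\D(A)),\CAlg(\D(B)))^{\mathrm{lex}}
\]
sending a symmetric monoidal colimit-preserving functor $F$ to the induced functor $\CAlg(F)$ on commutative algebras; here the superscript $\mathrm{lex}$ denotes the subanima spanned by those functors that preserve finite limits.

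The first step I would carry out is to check that, when $X$ is affine, this superscript is vacuous: \emph{every} morphism $\CAlg(\D(A))\to\CAlg(\D(B))$ of $\CAlg(\D(\bS))$-algebras in $\Pr^L$ preserves finite limits. Indeed, by Lemma~\ref{lem:affine} any such functor has the form $C\mapsto B\otimes_A C$ for a (unique) morphism $A\to B$, and base change of commutative algebras preserves finite limits: the forgetful functors $\CAlg(\D(A))\to\D(A)$ and $\CAlg(\D(B))\to\D(B)$ preserve limits and are conservative, and $B\otimes_A(-)\colon\D(A)\to\D(B)$ is exact, being a colimit-preserving functor between stable $\infty$-categories. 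Hence the target of $\nu_X(S)$ is the full mapping anima $\Hom_{\Pr^L_{\CAlg(\D(\bS))/}}(\CAlg(\D(A)),\CAlg(\D(B)))$.

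The second step is to factor $\nu_X(S)$ through $\Hom(A,B)$. The standard identification of module categories (see \cite[\S4.8.5]{ha}) provides an equivalence $\Hom(A,B)\xrightarrow{\ \sim\ }\Hom_{\CAlg(\Pr^L)}(\D(A),\D(B))$ given by $\phi\mapsto\phi^*=B\otimes_A(-)$. Precomposing $\nu_X(S)$ with this equivalence yields the assignment $\phi\mapsto\CAlg(\phi^*)=\big(C\mapsto B\otimes_A C\big)$, which is exactly the morphism that Lemma~\ref{lem:affine} asserts to be an equivalence $\Hom(A,B)\xrightarrow{\ \sim\ }\Hom_{\Pr^L_{\CAlg(\D(\bS))/}}(\CAlg(\D(A)),\CAlg(\D(B)))$. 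By two-out-of-three, $\nu_X(S)$ is an equivalence; since $S$ was an arbitrary nonconnective affine scheme, $\nu_X$ is an isomorphism of nonconnective prestacks.

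I do not expect a serious obstacle: the essential content has already been extracted in Lemma~\ref{lem:affine}. The points that require a little care are (i) verifying that the finite-limit-preservation condition defining $\Spec(\CAlg(\QCoh(X)))$ is automatic for affine $X$, so that the domain and codomain of the comparison coincide with those appearing in Lemma~\ref{lem:affine}, and (ii) tracking the compatibility of the three equivalences in play — $\phi\mapsto\phi^*$, $F\mapsto\CAlg(F)$, and the equivalence of Lemma~\ref{lem:affine} — so that the two-out-of-three argument is legitimate.
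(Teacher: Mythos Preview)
Your proposal is correct and follows the same approach as the paper: both reduce directly to Lemma~\ref{lem:affine}. The paper's proof is the single line ``This follows from Lemma~\ref{lem:affine},'' and your argument simply unpacks this by invoking the standard identification $\Hom(A,B)\simeq\Hom_{\CAlg(\Pr^L)}(\D(A),\D(B))$ and checking that the finite-limit-preservation condition is automatic.
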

\begin{proof}
This follows from Lemma~\ref{lem:affine}.
\end{proof}

\begin{lemma}\label{lem:pullback}
Let $X$ be a prestack, and let $p\colon U \rightarrow X$ and $q\colon V \rightarrow X$ be affine morphisms. Then we have equivalences
\[
\Spec(\QCoh(U \times_X V)) = \Spec(\QCoh(U)) \times_{\Spec(\QCoh(X))} \Spec(\QCoh(V))
\]
 and
\[
\Spec(\CAlg(\QCoh(U \times_X V))) = \Spec(\CAlg(\QCoh(U))) \times_{\Spec(\CAlg(\QCoh(X)))} \Spec(\CAlg(\QCoh(V)))
\]
\end{lemma}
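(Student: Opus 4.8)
The plan is to use that $p$ and $q$, being affine, realize $U$ and $V$ as relative spectra over $X$: writing $\Ascr=p_*\Oscr_U$ and $\Bscr=q_*\Oscr_V$ for the corresponding commutative algebra objects of $\QCoh(X)$, one has $U=\Spec_X\Ascr$, $V=\Spec_X\Bscr$ and $U\times_X V=\Spec_X(\Ascr\otimes_{\Oscr_X}\Bscr)$. Under the standard identification of quasicoherent sheaves on a relative affine scheme with modules over the structure algebra, this gives $\QCoh(W)\we\Mod_{\Cscr_W}(\QCoh(X))$ and hence $\CAlg(\QCoh(W))\we\CAlg(\QCoh(X))_{\Cscr_W/}$, compatibly with pullback from $X$, where $\Cscr_U=\Ascr$, $\Cscr_V=\Bscr$ and $\Cscr_{U\times_X V}=\Ascr\otimes_{\Oscr_X}\Bscr$. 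Everything then follows by combining these identifications with the observation that both $\Spec(\QCoh(-))$ and $\Spec(\CAlg(\QCoh(-)))$, evaluated at a fixed nonconnective affine scheme $S$, are of the form $\Hom(-,\text{target})$ with the variable category appearing as the source, and so carry colimits of categories to limits of anima.

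For the first equivalence I would invoke the standard computation of a relative tensor product of module categories, $\Mod_\Ascr(\QCoh(X))\otimes_{\QCoh(X)}\Mod_\Bscr(\QCoh(X))\we\Mod_{\Ascr\otimes_{\Oscr_X}\Bscr}(\QCoh(X))$. Since pushouts in $\CAlg$ of a presentably symmetric monoidal $\infty$-category are computed as relative tensor products, this exhibits $\QCoh(U\times_X V)$ as the pushout of $\QCoh(U)\leftarrow\QCoh(X)\rightarrow\QCoh(V)$ (along $p^*$ and $q^*$) in $\CAlg(\Pr^L)$. Evaluating the asserted equivalence on an arbitrary nonconnective affine scheme $S$, its left-hand side becomes $\Hom_{\CAlg(\Pr^L)}(\QCoh(U\times_X V),\QCoh(S))$, and since $\Hom_{\CAlg(\Pr^L)}(-,\QCoh(S))$ carries colimits to limits it sends the pushout square to a pullback square, which is exactly $\Spec(\QCoh(U))(S)\times_{\Spec(\QCoh(X))(S)}\Spec(\QCoh(V))(S)$. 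As $S$ was arbitrary, this proves the first equivalence.

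For the second equivalence I would run the same argument one categorical level up. Let $\mathcal{K}$ denote the $\infty$-category whose objects are presentable $\infty$-categories under $\CAlg(\D(\bS))$ and whose morphisms are the colimit- and finite-limit-preserving functors over $\CAlg(\D(\bS))$, so that by definition $\Spec(\CAlg(\QCoh(W)))(S)=\Hom_{\mathcal{K}}(\CAlg(\QCoh(W)),\CAlg(\QCoh(S)))$. Applying $\CAlg(-)$ to the pushout square of the previous paragraph, and using $\CAlg(\QCoh(W))\we\CAlg(\QCoh(X))_{\Cscr_W/}$ together with the fact that $\Ascr\otimes_{\Oscr_X}\Bscr$ is the coproduct of $\Ascr$ and $\Bscr$ in $\CAlg(\QCoh(X))$, one identifies $\CAlg(\QCoh(U\times_X V))$ with the pushout of $\CAlg(\QCoh(U))\leftarrow\CAlg(\QCoh(X))\rightarrow\CAlg(\QCoh(V))$ in $\mathcal{K}$, the structure maps being $\CAlg(p^*)$, $\CAlg(q^*)$ and the pullbacks along the two projections, each of which is colimit-preserving and exact and hence lies in $\mathcal{K}$. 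Evaluating on an arbitrary nonconnective affine $S$ and applying $\Hom_{\mathcal{K}}(-,\CAlg(\QCoh(S)))$, which sends this pushout to a pullback, produces $\Spec(\CAlg(\QCoh(U)))(S)\times_{\Spec(\CAlg(\QCoh(X)))(S)}\Spec(\CAlg(\QCoh(V)))(S)$, as desired.

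The step I expect to be the main obstacle is the claim in the previous paragraph that forming commutative algebra objects carries the relevant pushout of presentably symmetric monoidal $\infty$-categories to a pushout in $\mathcal{K}$ — equivalently, that $\CAlg(\QCoh(X))_{\Ascr\otimes_{\Oscr_X}\Bscr/}$ has the expected mapping-out universal property against colimit- and finite-limit-preserving functors under $\CAlg(\D(\bS))$. I would reduce this to the universal property of a slice $\Escr_{a\sqcup b/}$ of a presentable $\infty$-category $\Escr$ over a coproduct, expressed in terms of the base-change left adjoints $\Escr_{a/}\leftarrow\Escr\rightarrow\Escr_{b/}$; the only bookkeeping beyond the purely presentable statement is tracking preservation of finite limits, which is harmless here because all of the categories and functors in sight are stable and exact.
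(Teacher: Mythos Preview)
Your overall strategy matches the paper's: both arguments reduce to identifying $\QCoh(U\times_X V)$ and $\CAlg(\QCoh(U\times_X V))$ with module categories over $\Ascr\otimes_{\Oscr_X}\Bscr$ in $\QCoh(X)$ and in $\CAlg(\QCoh(X))$ respectively, and then reading off the desired pullback of mapping anima from the resulting pushout universal property. The paper phrases the second identification as a relative tensor product in the symmetric monoidal $\infty$-category of $\infty$-categories with sifted colimits (with the cocartesian structures), which is the same content as your coslice pushout.

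There is, however, a genuine error in your last paragraph. The $\infty$-categories $\CAlg(\QCoh(W))$ are \emph{not} stable: they are not even pointed, since the initial object $\Oscr_W$ and the terminal object $0$ differ. So you cannot deduce finite-limit preservation of the induced functor $H$ from stability. The conclusion is nonetheless true, but for a different reason. Pass to right adjoints: if $F\colon\Escr_{A/}\to\Dscr$ and $G\colon\Escr_{B/}\to\Dscr$ are left adjoints preserving finite limits (so $F^R,G^R$ preserve finite colimits), the induced $H\colon\Escr_{A\sqcup B/}\to\Dscr$ has right adjoint $H^R$ satisfying $U_{A\sqcup B}\circ H^R\simeq U_A\circ F^R$, where $U_{(-)}$ denotes the forgetful functor from the coslice. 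Since forgetful functors from coslices create colimits, $H^R$ preserves finite colimits iff $U_{A\sqcup B}\circ H^R$ does, and the latter equals $U_A\circ F^R$, which preserves finite colimits by hypothesis. Hence $H$ preserves finite limits. With this correction in place your argument goes through.
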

\begin{proof}
For the first equation it suffices to show that we have an equivalence
\[
\QCoh(U \times_X V) = \QCoh(U) \otimes_{\QCoh(X)} \QCoh(V).
\]
Indeed, both sides can be identified with the $\infty$-category of modules over $p_*(\mathcal{O}_U) \otimes q_*(\mathcal{O}_V)$ in $\QCoh(X)$.

Similarly, to establish the second equation it suffices to show that we have an equivalence
\[
\CAlg(\QCoh(U \times_X V)) = \CAlg(\QCoh(U)) \otimes_{\CAlg(\QCoh(X))} \CAlg(\QCoh(V)),
\]
where we equip all $\infty$-categories in sight with their cocartesian symmetric monoidal
    structures and take the relative tensor product in the symmetric monoidal $\infty$-category of
    $\infty$-categories having sifted colimits. Indeed, both sides can be identified with the
    $\infty$-category of modules over $p_*(\mathcal{O}_U) \otimes q_*(\mathcal{O}_V)$ in
    $\CAlg(\QCoh(X))$.
\end{proof}

\begin{lemma}\label{lem:nuX}
Let $X$ be a quasicompact geometric stack with affine diagonal, and assume that there exists an
    fpqc surjection $p\colon U \rightarrow X$ with $U$ affine such that the commutative algebra $p_*(\mathcal{O}_U)$ in $\QCoh(X)$ is $\bE_\infty$-descendable. Then $\nu_X$ is an equivalence.
\end{lemma}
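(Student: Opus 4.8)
The plan is to realize both $\Spec(\QCoh(X))$ and $\Spec(\CAlg(\QCoh(X)))$ as colimits of the \v{C}ech nerves of the canonical maps out of $\Spec(\QCoh(U))$ and $\Spec(\CAlg(\QCoh(U)))$, in a way compatible with $\nu$, and then conclude via Lemma~\ref{lem:affine2}. Write $U^\bullet$ for the \v{C}ech nerve of $p\colon U\rightarrow X$; since $X$ has affine diagonal and $U$ is affine, every term $U^n$ is affine, so $\nu_{U^n}$ is an isomorphism for all $n$. An iterated application of Lemma~\ref{lem:pullback} identifies the \v{C}ech nerve of $\Spec(\QCoh(U))\rightarrow\Spec(\QCoh(X))$ with $\Spec(\QCoh(U^\bullet))$ and that of $\Spec(\CAlg(\QCoh(U)))\rightarrow\Spec(\CAlg(\QCoh(X)))$ with $\Spec(\CAlg(\QCoh(U^\bullet)))$, compatibly with $\nu$. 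Thus, once I show that these two augmented simplicial prestacks are colimit diagrams, $\nu_X$ will be the geometric realization of the levelwise equivalence $\nu_{U^\bullet}$, hence an equivalence.

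To prove the two colimit statements, I would first note that $\Spec(\QCoh(X))$ and $\Spec(\CAlg(\QCoh(X)))$ are sheaves on nonconnective affine schemes for the topology generated by descendable covers: the functors $\QCoh$ and $\CAlg(\QCoh)$ satisfy descent along descendable covers (for $\QCoh$ this is the statement that a descendable algebra is of effective descent; $\CAlg$ preserves the limits involved), and $\Hom$ out of a fixed object preserves these limits. It then suffices to check that the maps out of $\Spec(\QCoh(U))$ and $\Spec(\CAlg(\QCoh(U)))$ are effective epimorphisms for this topology. Let $S$ be a nonconnective affine scheme and $\phi$ an $S$-point of one of the targets; in either case $\phi$ induces a symmetric monoidal functor $\CAlg(\QCoh(X))\rightarrow\CAlg(\QCoh(S))$ preserving finite limits and finite coproducts, hence the \v{C}ech nerve of $\Oscr_X\rightarrow p_*\Oscr_U$ and its partial totalizations. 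Since $p_*\Oscr_U$ is $\bE_\infty$-descendable, Proposition~\ref{prop:equivalent} gives a retraction $\Oscr_X\rightarrow\Tot^n((p_*\Oscr_U)^\bullet)\rightarrow\Oscr_X$ in $\CAlg(\QCoh(X))$ for some $n$; applying $\phi$ yields a retraction of $\Oscr_S\rightarrow\Tot^n(B^\bullet)$ where $B:=\phi(p_*\Oscr_U)$, so by Proposition~\ref{prop:equivalent} again $\Oscr(S)\rightarrow B$ is descendable and $\Spec B\rightarrow S$ is a descendable cover. Finally, the base change of $\phi$ along $\Oscr(S)\rightarrow B$ sends $p_*\Oscr_U$ to $B\otimes_{\Oscr(S)}B$, and the multiplication $B\otimes_{\Oscr(S)}B\rightarrow B$ supplies an augmentation through which this base change factors over $\Spec(\QCoh(U))$ (using $\QCoh(U)=\Mod_{p_*\Oscr_U}(\QCoh(X))$), respectively over $\Spec(\CAlg(\QCoh(U)))$ (using $\CAlg(\QCoh(U))=\CAlg(\QCoh(X))_{p_*\Oscr_U/}$); hence $\phi$ lifts after the descendable base change $\Spec B\rightarrow S$, as needed.

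The main obstacle I anticipate is the commutative-algebra bookkeeping: checking that $\CAlg(\QCoh)$ satisfies descendable descent as a functor valued in $\Pr^L_{\CAlg(\D(\bS))/}$, that the identifications of \v{C}ech nerves from Lemma~\ref{lem:pullback} are natural in the way required, and, most delicately, that the lift built from the augmentation $B\otimes_{\Oscr(S)}B\rightarrow B$ is again a finite-limit-preserving symmetric monoidal left adjoint. Once these points are verified the argument is formal.
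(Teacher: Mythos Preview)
Your proposal is correct and follows essentially the same strategy as the paper's own proof: compare the \v{C}ech nerves of $q\colon\Spec(\QCoh(U))\to\Spec(\QCoh(X))$ and $q'\colon\Spec(\CAlg(\QCoh(U)))\to\Spec(\CAlg(\QCoh(X)))$ via Lemmas~\ref{lem:affine2} and~\ref{lem:pullback}, observe that both targets are sheaves for the ($\bE_\infty$-)descendable topology, and check that $q$ and $q'$ are effective epimorphisms by transporting the $\bE_\infty$-descendability of $p_*\Oscr_U$ along an arbitrary $S$-point. The only streamlining in the paper is that, rather than building a lift after passing to $\Spec B$ as you do, it identifies the fiber of $q'$ over an $S$-point $F$ directly with $\Spec F(p_*\Oscr_U)$, which absorbs the bookkeeping you flag in your final paragraph.
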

\begin{proof}
We have a commutative square
\[
\begin{tikzcd}
\Spec(\QCoh(U)) \arrow{r}{\nu_U} \arrow{d}{q} & \Spec(\CAlg(\QCoh(U))) \arrow{d}{q'} \\
\Spec(\QCoh(X)) \arrow{r}{\nu_X} & \Spec(\CAlg(\QCoh(X)))
\end{tikzcd}
\]
Applying lemmas  \ref{lem:affine2} and \ref{lem:pullback}  we see that the \v{C}ech nerves of $q$
    and $q'$ are equivalent. Since both $\Spec(\QCoh(X))$ and $\Spec(\CAlg(\QCoh(X)))$ satisfy
    descent with respect to the $\bE_\infty$-descendable topology (and in fact the descendable
    topology), we may reduce to showing that $q$ and $q'$
    are $\bE_\infty$-descendable surjections. We will argue that this is the case for $q'$; the case of $q$ is analogous.

Fix a map $S \rightarrow \Spec(\CAlg(\QCoh(X)))$ with $S$ a nonconnective affine scheme,
    corresponding to a left adjoint functor $F\colon \CAlg(\QCoh(X)) \rightarrow \CAlg(\QCoh(S))$ under
    $\CAlg(\D(\bS))$ which preserves finite limits. We wish to show that the base change of $q'$ to
    $S$ is a descendable surjection. Unwinding the definitions, we see that the base change of $q'$
    to $S$ is given by the spectrum of $F(p_*(\mathcal{O}_U))$. The desired assertion now follows
    from the fact that $F$ preserves $\bE_\infty$-descendable algebras, since it preserves finite
    limits and coproducts.
 \end{proof}

\begin{proof}[Proof of Theorem~\ref{thm:tannaka}]
Let $\Spec^\cn(\QCoh(X))$ and $\Spec^\cn(\CAlg(\QCoh(X)))$ be the subobjects of the restriction of
    $\Spec(\QCoh(X))$ and  $\Spec(\CAlg(\QCoh(X)))$ to connective $\bE_\infty$-rings consisting of those functors that preserve connective objects. Then we have morphisms
\[
X \xrightarrow{\eta^\cn_X}  \Spec^\cn(\QCoh(X)) \xrightarrow{\nu^\cn_X} \Spec^\cn(\CAlg(\QCoh(X))).
\]
Our goal is to show that the composition $\nu^\cn_X \circ \eta^\cn_X$ is an equivalence. We claim
    that in fact both $\eta^\cn_X$ and $\nu^\cn_X$ are equivalences. The fact that $\eta^\cn_X$ is
    an equivalence follows from \cite[Thm.~1.0.3]{stefanich-tannaka}. By Lemma~\ref{lem:nuX}, to
    see that $\nu^\cn_X$ is an equivalence it is enough to show that, for every affine scheme $S$,
    a colimit preserving symmetric monoidal functor $F\colon \QCoh(X) \rightarrow \QCoh(S)$
    preserves connective objects if the induced functor $\CAlg(\QCoh(X)) \rightarrow
    \CAlg(\QCoh(S))$ preserves connective objects. Indeed, if $M$ is a connective object of
    $\QCoh(X)$, then the square zero algebra $F(\mathcal{O}_X \oplus M) \we \mathcal{O}_S \oplus F(M)$ is connective, from which we deduce that $F(M)$ is connective.
\end{proof}

\small
\bibliographystyle{amsplain}
\bibliography{eid}

\medskip
\medskip
\noindent
\textsc{Department of Mathematics, Northwestern University}\\
{\ttfamily antieau@northwestern.edu}

\medskip
\noindent
\textsc{Max-Planck-Institut für Mathematik Bonn}\\
{\ttfamily stefanich@mpim-bonn.mpg.de}

\end{document}